\documentclass{compositio}
\usepackage{amssymb, amsmath, latexsym, amsthm, amscd, graphicx, xcolor, hyperref, bm,enumitem, soul}
\usepackage{nicefrac}
\thispagestyle{empty}
\usepackage{soul}

%\usepackage{newpxtext,newpxmath} % These fonts are just examples.
%\usepackage{ulem}
%% If you are using letters of the Polish alphabet, add 

\usepackage[T2A]{fontenc}
\usepackage[utf8]{inputenc}%% E.g. the name "Zoladz" is then coded \.Zo{\l}\k{a}d\'z

\newcommand{\ignore}[1]{}

\raggedbottom
 
\def\R{\mathbb R}
\def\N{\mathbb N}

\def\Z{\mathbb Z}
\def\Q{\mathbb Q}

\def\I{\mathcal I}

\def\A{\mathcal A}

\def\={\equiv}
\def\<{\langle}
\def\>{\rangle}

\def\Sing{\operatorname{Sing}}
\newcommand{\UA}{\operatorname{U\mspace{-3mu}A}}
\def\rank{\operatorname{rank}}
\def\ov{\overline}

\def\r{\mathbf{r}}
\def\v{\mathbf{v}}

\def\x{\mathbf{x}}
\def\y{\mathbf{y}}
\def\z{\mathbf{z}}
\def\q{\mathbf{q}}
\def\a{\mathbf{a}}

\def\1{\mathbf{1}}

\def\Id{\operatorname{Id}}

\def\Gr{\operatorname{Gr}}
\def\dist{\operatorname{dist}}

\def\opt1{\operatorname{T}^1}

\def\GL{\operatorname{GL}}

\def\b{\mathbf{b}}

\def\e{\mathbf{e}}

\def\c{\mathbf{c}}

\def\x{\mathbf{x}}

\newcommand{\va}{{\boldsymbol{\alpha}}}
\newcommand{\vb}{{\boldsymbol{\beta}}}
\newcommand{\vw}{{\boldsymbol{\omega}}}

\newcommand{\vp}{{\bf p}}
\newcommand{\vq}{{\bf q}}

\newcommand{\nz}{\smallsetminus\{0\}}

\newcommand\da{Diophantine approximation}
\newcommand\hd{Hausdorff dimension}

\newcommand\amr{$A\in M_{m,n}(\R)
$}
\newcommand{\df}{{\, \stackrel{\mathrm{def}}{=}\, }}

\newcommand\mr{M_{m,n}(\R)
}

\numberwithin{equation}{section}

\newif\ifdraft\drafttrue
%for nondraft mode when typesetting this file by itself
%uncomment the following line:

\draftfalse

\newcommand\eq[2]{{\ifdraft{\ \tt [#1]}\else\ignorespaces\fi}\begin{equation}\label{eq:#1}{#2}\end{equation}}
\newcommand {\equ}[1]     {\eqref{eq:#1}}

\newcommand {\comm}[1]   {\textcolor{red}{#1}}

\newcommand{\vre}{\varepsilon}
\newcommand{\sm}{\smallsetminus}

\newtheorem{theorem}{Theorem}[section]

\newtheorem{proposition}[theorem]{Proposition}
\newtheorem{claim}[theorem]{Claim}

\newtheorem{corollary}[theorem]{Corollary}
\newtheorem{lemma}[theorem]{Lemma}
\newtheorem{sublemma}[theorem]{Sublemma}
\newtheorem{definition}[theorem]{Definition}

\newtheorem{remark}[theorem]{Remark}

\title[Uniform approximation: weights, intersections, rates]{Singularity, weighted uniform
  approximation, intersections and rates}
\author[Kleinbock]{Dmitry Kleinbock}
\email{kleinboc@brandeis.edu}
\address{Department of Mathematics, Brandeis University, Waltham MA, USA}
\author[Moshchevitin]{Nikolay Moshchevitin}
\email{nikolai.moshchevitin@tuwien.ac.at}
\address{Institut f\"ur diskrete Mathematik und Geometrie, Technische Universit\"at Wien, Freihaus, Wiedner Hauptstraße 8, A-1040, Wien}
\author[Warren]{Jacqueline M.\ Warren}
\email{j4warren@ucsd.edu}
\address{Department of Mathematics, University of California San Diego,  CA, USA}
\author[Weiss]{Barak Weiss}
\email{barakw@tauex.tau.ac.il}
\address{Department of Mathematics, Tel Aviv University, Tel Aviv, Israel}
\shortauthors{Kleinbock, Moshchevitin, Warren and Weiss}

\subjclass{11H06, 11J13, 11J54, 11J82}
\keywords{uniform Diophantine approximation, singular systems of linear forms, \da\ with weights, \da\  on manifolds and fractals}
\thanks{The authors are grateful for funding by the National Science Foundation
under grant DMS-2155111,   {Austrian Science Fund (FWF), Forschungsprojekt PAT1961524},  the Israel Science Foundation under
grants ISF 2019/19 and ISF-NSFC 3739/21, 
%\textcolor{red}{ INSERT MORE GRANT NUMBERS} 
and the Zuckerman STEM Leadership
  Program.}

\date{March 2025}
%\thanks{{The first named author was supported by NSF grant DMS-2155111.}
%{The second named author was supported   by the Austrian Science Fund (FWF), Forschungsprojekt PAT1961524.}
%}

\begin{document}

\begin{abstract}
A classical argument was introduced by Khintchine in 1926 in order to
exhibit the existence of 
totally irrational singular linear forms in two variables. This
argument was subsequently revisited and extended by many authors. For
instance, in {1959} Jarn\'{\i}k used it to show that for $n \geq 2$ and for {any
non-increasing positive  $f$} there are totally
irrational matrices $A \in M_{m,n}(\R)$ such that for all large enough
$t$ there are $\mathbf{p} \in \Z^m, \mathbf{q} \in \Z^n \sm \{0\}$
with
$$\|\mathbf{q}\| \leq t \ \text{ and } \ \|A \mathbf{q} - \mathbf{p}\| \leq
f(t).$$
We denote the collection of such matrices by
$\UA^*_{m,n}(f)$. We adapt Khintchine's argument to show that the
sets $\UA^*_{m,n}(f)$, and their weighted analogues
$\UA^*_{m,n}(f, \vw)$, intersect many manifolds and fractals,
and have strong intersection properties. For example, we show that:
\begin{itemize}
  \item
  When $n \geq 2$, the set $\bigcap_{\vw} \UA^*(f, \vw) $,
  where the intersection is over all weights $\vw$, is nonempty,
  and moreover intersects many manifolds and fractals;
\item
  For $n \geq 2$, there are vectors in $\R^n$  which are
  simultaneously $k$-singular for every 
  $k$, in the sense of Yu;

  \item
when $n \geq 3$, $\UA^*_{1,n}(f) + \UA^*_{1,n}(f)  =
\R^n$.
\end{itemize}
We also obtain new bounds on the rate of singularity which can be
attained by  {column vectors in analytic submanifolds  of dimension at least 2} in $\R^n$. 

  \end{abstract}
  
\maketitle
%\tableofcontents

%\textcolor{red}{Remove table of contents when done writing the draft}

%{\color{orange} Changes in ORANGE are done by Kolya on  23 March 2025.}

\section{Introduction}

%\noindent {\bf Abstract:} Let $Q$ be an indefinite ternary form. In the 1980s Margulis proved the longstanding Oppenheim Conjecture, stating that unless $Q$ is proportional to an integral form, the set of values $Q$ attains at the integer points is dense in $\R$. We give quantitative results to that effect. In particular, if the coefficients of $Q$ are algebraic (but $Q$ is not proportional to an integral form), and if $(\alpha,\beta)$ is an interval not containing 0, the number of integer points $v$ in a sphere of radius $R$ for which $\alpha<Q(v)<\beta$ is asymptotic (as $R\to\infty$) to the volume of the corresponding shape in $\R^3$ with a power saving error term.

\subsection{Background and history}\label{bh} In 1926, Khintchine \cite{H1} introduced the property of singularity of
vectors,  which later was extended to systems of linear forms by Jarn\'{\i}k \cite{Jarnik}: 
%According to his definition, 
A
%a matrix \amr\ 
matrix $A \in M_{m,n}(\R)$ (viewed as a system of $m$ linear forms in $n$ variables) %$\mathbf{x} \in \R^d$ 
is {\em singular} {(notation: $A\in\Sing_{m,n}$)} if for any
$\vre>0$ there is $t_0>0$ such that for all $t \geq t_0$ there exist
$\mathbf{q} \in \Z^n \sm \{0\}$ and $\mathbf{p}  \in \Z^m$ such that 
\begin{equation}\label{eq: singularity}
  \|\mathbf{q}\|  \leq t \ \ \text{ and } \ \ 
  \|A\mathbf{q}  - \mathbf{p} \| \le \frac{\vre}{t^{n/m}}. 
\end{equation}
{Here $M_{m,n}(\R)$ %stands for 
{is} the collection of $m\times n$ real
  matrices, and $\|
\cdot \|$  stands for the supremum norm on $\R^m$ and $\R^n$, although
the choice of the norm will be immaterial for this work.}

Note that \eqref{eq: singularity} always has nontrivial integer solutions if $\vre = 1$ (Dirichlet's Theorem).
Khintchine, and then  Jarn\'{\i}k in bigger generality, showed
that while for $m=n=1$ the only
singular numbers are rationals, for $\max(m,n) > 1$ there exist
uncountably many singular matrices which are  {\sl totally
irrational}, that is, not contained in any rational affine subspace of $\mr$.
  %Following Khintchine's discovery, 
 {We will denote by  $\Sing^*_{m,n}$ the set of totally irrational singular $m\times n$ matrices. Since the work of Khintchine} a
large  
body of work in Diophantine approximation has been devoted to the
understanding of %singular systems of linear forms
{$\Sing^*_{m,n}$ and related sets.  See
\cite{Moshchevitin_survey} for a detailed survey and an extensive
bibliography, {and \cite{KKLM, dfsu, BCC} for the computation of the \hd\
  of $\Sing_{m,n}$ and for a discussion of related work. %{The study of singular matrices} has spawned}

\ignore{According to his definition, 
a vector $\mathbf{x} \in \R^d$ is {\em singular} if for any
$\vre>0$ there is $t_0>0$ such that for all $t \geq t_0$, there exist
$\mathbf{q} \in \Z^d \sm \{0\}$ and $p \in \Z$ such that 
\begin{equation}\label{eq: singularity}
  \|\mathbf{q}\|  \le t \ \ \text{ and } \ \ 
  |\mathbf{q} \cdot \mathbf{x} - p| < \frac{\vre}{t^d}. 
\end{equation}
Khintchine showed
that while for $d=1$, the only
singular numbers are rationals, for $d>1$ there are totally
irrational singular vectors.\footnote{Khintchine only dealt with the
  case $d=2$ in his original paper, and the general case was
  documented in 1959 by Jarn\'{\i}k.} Following Khintchine's discovery, a
large  
body of work in Diophantine approximations was devoted to the
understanding of singular vectors, and this has spawned the topic of
uniform Diophantine approximation. See
\cite{Moshchevitin_survey} for a detailed survey and an extensive bibliography.}

\smallskip
%We now introduce a quantitative variant of singularity, also known as the theory of improvement to Dirichlet's Theorem, or as a general set-%up of {uniform \da}. 
	{A natural way to generalize \eqref{eq: singularity} is to replace $\frac{\vre}{t^{n/m}}$ in the right hand side by an arbitrary approximating function.}

\begin{definition}\label{Dir} \rm
Given a  %positive
function $f: \R_{>0} \to \R_{>0}$, say %, following \cite{kw1} \comm{(add other references?)}, 
that  \amr\ is {\sl uniformly $f$-approximable}\footnote{This
  property, {with a slightly different parametrization,} has also been called `$f$-Dirichlet' and `$f$-singular' {in the literature,
  see e.g.\ \cite{kw1} and \cite{dfsu}}.}, or {\sl $f$-uniform} for brevity,
if %there exists a positive $\vre < 1$ such that 
for every sufficiently 
large $t>0$  one can find 
%a nonzero integer 
$\vq  \in \Z^n\nz$ and $\vp \in \Z^m$
 with 
\eq{digeneral}{
{\|A\vq - \vp \|
\le  f(t)}
  \ \ \ \mathrm{and}  
\ \ \|\vq\|
\le  t
.} 
%is {\sl $\vre$-Dirichlet improvable\/}, 
%or  $Y\in\DI_c$ when the
%dimensionality is clear from the context, 
Denote by   $\UA_{m,n}(f)$ the set of %uniformly $f$-approximable
{ $f$-uniform}
$m\times n$ matrices, and by  $\UA^*_{m,n}(f)$ the set of totally
irrational $A\in \UA_{m,n}(f)$.
\end{definition}

{We remark that in general the field} of
{\sl uniform Diophantine approximation} {deals with the solvability of systems of type \eqref{eq: singularity} or \equ{digeneral} for \textit{all large enough} values of $t$, as opposed to the (much better understood) theory of {\sl asymptotic  approximation}. There one can define the sets  $\A_{m,n}(f)$ % the set
 of %uniformly
 $f$-approximable
%{ $f$-uniform}
$m\times n$ matrices, where those systems are required to have integer solutions for \textit{an unbounded set} of $t>0$}.
\smallskip

{Dirichlet's Theorem   asserts that $
  \UA_{m,n}(\phi_{n/m}) = \mr$, where we use the notation $\phi_a(t)
  \df t^{-a}$. Also one clearly has
 $$\Sing_{m,n} = \bigcap _{\varepsilon > 0}\UA_{m,n}(\varepsilon
 \phi_{n/m}),$$
 and, more generally, $\UA_{m,n}(f)\subset \Sing_{m,n}$ %is contained in the set of singular matrices 
as long as $f$
satisfies
\begin{equation}\label{eq: f as in}
  t^{n/m}f(t) \to_{t
  \to \infty}
0.
\end{equation}}

%{Here is an equivalent approach:} given
%\amr, define its {\em irrationality measure function} by
%$$
%\psi_{A}: \R_{>0} \to \R_{>0}, \ \ \psi_{A}(t) \df
%\inf \big\{\|A\mathbf{q}  - \vp\| : \mathbf{q} \in \Z^n\nz, \ \|\mathbf{q} \| \leq t, \ \%vp \in \Z^m \big\}. 
%$$
%Then %, given %a  positive
%%function $f: \R_{>0} \to \R_{>0}$, we say that $\mathbf{x}$ is {\em
% % $f$-singular} 
%it is easy to see that    $A$ is $f$-uniform
%  if and only if $\psi_{A}(t) \le  f(t)$ for all
%large enough $t$. 
%%\comm{To make the last statement clear I replaced $\le$ with $<$.} 
%Note that one can also use the function $
%\psi_{A}$ to study {\em asymptotic approximation}:  requiring that
%$\psi_{A}(t) \le   f(t)$ for an unbounded set of $t>0$, one gets a definition of {\em $f$-approximable} systems of linear forms $A$. Asymptotic approximation will
%not concern us in this paper. \comm{I am actually not sure if we really need this equivalent statement, but Nikolay seems to like these functions.}

\ignore{Clearly ${D}_{m,n}(f)$ is contained in the set of singular matrices as long as $f$
satisfies
\begin{equation}\label{eq: f as in}
  t^{n/m}f(t) \to_{t
  \to \infty}
0.
\end{equation}
Note also that in the definition of singularity the choice of the norm $\|
\cdot \|$ was immaterial, but the $f$-uniform property (equivalently, the definition of the irrationality
measure function) depends on the choice of a norm. However we shall see later that for our results  the choice of the norm will play no role whatsoever.}
%For concreteness, throughout this
%paper 
%In most of examples involving simultaneous \da\ we will work the supremum norm.  \comm{We'll see if this is true or not.}

It is well-known (see \cite[Ch.~V, \S 7]{Cassels_book})
that
the set of singular %vectors 
matrices is a nullset in $M_{m,n}(\R)$, and hence so is
the set of $f$-uniform matrices for $f$ satisfying \eqref{eq: f as
  in}. 
 It is also clear that matrices $A$ such that $  A\mathbf{q}  = \mathbf{p}$
   for some $\mathbf{q} \in \Z^n \sm \{0\}$ and $\mathbf{p}  \in \Z^m$
   are in $\UA_{m,n}(f)$ for any %positive 
   $f$. On the other hand we
   have:

 \begin{theorem}[Jarn\'{\i}k \cite{Jarnik}] \label{thm: classical theorem}
	Let $m,n\in \N$. %Then:
	{\begin{itemize}
	\item[\rm (a)] If $n > 1$, then for any %positive
        non-increasing 
     %function 
$f: \R_{>0} \to \R_{>0}$ the set of totally irrational
     $f$-uniform $m \times n$ matrices	is
     uncountable and dense. %\comm{Add the second part.}	
    \item[\rm (b)]  If, in addition, one assumes that \eq{vectorbound}{\lim_{t\to\infty}tf(t) = \infty,} then for any $m>1$ the set of totally irrational
     $f$-uniform {$m \times 1$} matrices (column vectors)	is
     uncountable and dense. 
     \end{itemize}} \end{theorem}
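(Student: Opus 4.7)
\textbf{The plan} is to prove both parts by Khintchine's recursive construction, writing $A = \lim_{k\to\infty} A_k$ as the limit of matrices equipped with companion integer pairs $(\vq_k, \vp_k)$ at a rapidly increasing sequence of scales $t_0 < t_1 < \cdots$. Since $f$ is non-increasing, if $\|\vq_k\|\leq t_k$ and $\|A\vq_k - \vp_k\|\leq f(t_{k+1})$, then $(\vq_k,\vp_k)$ witnesses \eqref{eq:digeneral} for every $t\in[t_k,t_{k+1}]$, so it suffices to arrange such pairs at every $k$. The sequence $(t_k)$ will be chosen growing so fast that the tail of $\sum_\ell \|\Delta_\ell\|\cdot\|\vq_j\|$, which measures how later perturbations corrupt the $j$-th relation, stays below $f(t_{j+1})$; this delivers both convergence of $A_k$ to some $A\in\mr$ and persistence of \eqref{eq:digeneral} in the limit.

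\textbf{Induction step for part (a), $n\geq 2$.} Given $A_k$ with $A_k\vq_k = \vp_k$, apply Dirichlet's theorem at height $t_{k+1}$ to obtain $(\vq_{k+1},\vp_{k+1})$ with $\|\vq_{k+1}\|\leq t_{k+1}$ and $\|A_k\vq_{k+1}-\vp_{k+1}\|=O(t_{k+1}^{-n/m})$, rejecting scalar multiples of $\vq_k$ if necessary. In the $m(n-1)$-dimensional subspace $\{\Delta:\Delta\vq_k=0\}$ solve the additional $m$ linear constraints $\Delta\vq_{k+1}=\vp_{k+1}-A_k\vq_{k+1}$; solvability uses exactly that $\vq_k,\vq_{k+1}$ are linearly independent and $n\geq 2$. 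Setting $A_{k+1}=A_k+\Delta_{k+1}$ with $\|\Delta_{k+1}\|=O(t_{k+1}^{-n/m}/\|\vq_{k+1}\|)$, we promote the new approximation to an exact equality while preserving $A_{k+1}\vq_k=\vp_k$. Density follows by starting from $A_0$ in any prescribed open set $U\subset\mr$ and keeping the cumulative perturbation small enough to stay in $U$; total irrationality and uncountability of the limits follow by a standard diagonal enumeration of proper rational affine subspaces of $\mr$, using the positive-dimensional freedom available at each stage — either in the choice of $(\vq_{k+1},\vp_{k+1})$ among many Dirichlet candidates at scale $t_{k+1}$, or in the $m(n-2)$-dimensional affine fiber of $\Delta_{k+1}$ when $n\geq 3$.

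\textbf{Part (b), $n=1$, $m\geq 2$.} Here $\vq_k=q_k$ is a scalar and $A_kq_k=\vp_k$ forces $A_k=\vp_k/q_k\in\Q^m$, leaving no affine room to perturb while preserving equality. Instead, each $A_k$ is itself taken to be a rational point $\vp_k/q_k$, and all flexibility is transferred to the choice of approximants. The arithmetic obstruction is that distinct rationals $A_k\neq A_{k+1}$ with bounded denominators satisfy
\begin{equation*}
\|A_{k+1}-A_k\|\;\geq\;\frac{1}{|q_k|\,|q_{k+1}|}
\end{equation*}
by integrality of $q_k\vp_{k+1}-q_{k+1}\vp_k$, while persistence of the $k$-th relation for the entire tail forces $\|A_{k+1}-A_k\|\leq f(t_{k+1})/|q_k|$; combining gives $|q_{k+1}|\geq 1/f(t_{k+1})$, which is compatible with $|q_{k+1}|\leq t_{k+1}$ precisely when $t_{k+1}f(t_{k+1})\geq 1$, i.e.\ under the hypothesis \eqref{eq:vectorbound}. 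Once this holds, the admissible annulus of rationals around $A_k$ contains $\gtrsim (t_{k+1}f(t_{k+1}))^m$ candidates, so the construction again branches into $2^{\aleph_0}$ totally irrational limits in any prescribed open set after the standard diagonalization to avoid rational subspaces of $\R^m$.

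\textbf{Where the work lies.} The genuinely delicate step in both parts is the tail bookkeeping: proving that every relation installed at step $j$ survives the infinitely many subsequent perturbations. This reduces to a sufficient growth condition on $(t_k)$ relative to $f$, which is freely arrangeable in (a) but essentially saturated in (b). In (b) the hypothesis $tf(t)\to\infty$ is exactly the slack needed to simultaneously satisfy the arithmetic lower bound on $\|A_{k+1}-A_k\|$, the tail estimate propagating earlier relations to the limit, and the diagonal argument enforcing total irrationality; without it, the three requirements pinch shut.
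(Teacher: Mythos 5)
Your proposal is a sketch of the classical Khintchine--Jarn\'{\i}k recursive construction: build $A$ as a limit of matrices $A_k$ with exact relations $A_k\vq_k=\vp_k$, perturbing at each stage in the kernel of the previous $\vq_k$ to install a new relation, and using the hypothesis on $f$ only to budget the tail. This is the argument that the paper \emph{abstracts}, but the paper's route is genuinely different in presentation and in scope. For (a), the paper deduces the statement from the axiomatic Theorem \ref{thm: special axiomatic theorem}, whose proof replaces explicit matrix sequences by a nested sequence of open sets $U_\ell$ and replaces your kernel-perturbation step by the single topological hypothesis that the collection $\mathcal{L} = \{L_{\vp,\vq}\}$ is totally dense, which is established separately as Proposition \ref{verydensematrices}. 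That separation is what lets the paper reuse the same machinery for weights, for maps $\varphi_k$ into different spaces, and for intersections with manifolds and fractals; your direct argument would have to be redone in each of those settings. For (b), the paper does not reprove the statement at all (it is cited to Jarn\'{\i}k); the only related result it proves, Theorem \ref{cor: vectors manifolds}, obtains the column-vector case by \emph{transference} from uniform approximation of order $g=d-1$ for the transposed row vector, and requires the hypothesis to hold monotonically. Your denominator-counting approach to (b) is therefore a substantively different route from anything in the paper.

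Two technical points in your write-up are slightly off, though fixable. In (a), the bound $\|\Delta_{k+1}\|=O\!\left(t_{k+1}^{-n/m}/\|\vq_{k+1}\|\right)$ is not correct as stated: the minimal-norm $\Delta$ with $\Delta\vq_k=0$ and $\Delta\vq_{k+1}=\vp_{k+1}-A_k\vq_{k+1}$ has norm comparable to $\|\vp_{k+1}-A_k\vq_{k+1}\|/\mathrm{dist}(\vq_{k+1},\R\vq_k)$, and ``rejecting scalar multiples of $\vq_k$'' does not prevent $\vq_{k+1}$ from lying very close to the line $\R\vq_k$. The construction still converges because $\mathrm{dist}(\vq_{k+1},\R\vq_k)$ is bounded below in terms of $\|\vq_k\|\le t_k$ only, which is fixed before $t_{k+1}$ is chosen; but the stated bound in terms of $\|\vq_{k+1}\|$ is wrong and would mislead the tail bookkeeping. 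In (b), the persistence condition you derive really constrains $\|A-A_k\|$ (the distance to the \emph{limit}), not $\|A_{k+1}-A_k\|$; the two agree to leading order only after you have already arranged the tail to be dominated by the first increment, so the implication runs the other way and needs to be phrased as part of the inductive choice of $t_{k+1}$, not as a consequence of it.
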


%\comm{Should we remark on the methods? transference vs.\ something else? At least we should say that when $\lim_{t\to\infty}tf(t) < \infty$ the result is false.}
 
%  constructed uncountably many $f$-uniform matrices for any positive {non-increasing} function $f$ as long as  $n > 1$ (

%In the remaining vector case ($n=1$, $m > 1$) there are natural restrictions on
%the rate of decay of $f$ for such a result to hold, and this case can
%be dealt with using a transference argument; this technique originates
%with Jarn\'{i}k \cite{Jarnik38},  see also \cite{KMW} and
%\S\S\ref{subsec: rates intro},  \ref{sec: transference}. \comm{Change it.}

\begin{remark}\label{exponents}\rm {The above theorem  %This result 
is often stated in a %slightly 
simplified form} by means of introducing the notion of {the} \textsl{uniform Diophantine exponent} $\hat \omega(A)$  to be the supremum of $a$
so that $A$ is 
$\phi_a$-uniform.
%, where as before we have $\phi_a(t) =
%t^{-a}$. 
In other words, %$\hat{\omega}(A)$ is the 
%supremum of 
\eq{omegahat}{\hat{\omega}(A) \df \sup
  %\begin{split}
  %  & \hat{\omega}(\mathbf{x}) \df \\ & \sup
  \left\{a> 0 \left| %\exists\, t_0 \ 
 \begin{aligned}
 % \forall
 \text{ for all large enough }
  t >0\qquad\\%>t_0 
  \ \exists \,\vq \in \Z^n\nz,\ \mathbf{p} \in \Z^m
  \qquad \\
   \text{
    such that }\equ{digeneral} \text{ holds for %$A = \mathbf{x}$ and 
    $f=\phi_a$}\end{aligned}\right.\right\}. 
% \end{split}
}
%\comm{(Do we need this repetition?)} 
Then Theorem
 \ref{thm: classical theorem} 
asserts that there exist uncountably many {totally irrational} $m\times n$ matrices $A$ with $\hat \omega(A) = \infty$ whenever $n>1$, and  uncountably many {totally irrational} $m\times 1$ matrices (column vectors) $\x$ with $\hat \omega(\x) = 1$ {as long as $m>1$}.

{Note that in the column vector case the extra condition \equ{vectorbound} cannot be removed: indeed, if  $\x$ is such that $\hat \omega(\x) > 1$, then one has  $\hat \omega(x_i) > 1$ for every component $x_i$ of $\x$, which {necessarily} implies that $x_i\in\Q$ for all $i$. See also 
a discussion in \cite[\S3.2]{dfsu}.}
%\comm{We should mention here that $1$ is a global  upper bound for $\hat \omega$ on the set of totally irrational column vectors as long as $m>1$, refer to Jarn\'{\i}k and give a precise reference.}

\end{remark}

 The {argument} of Khintchine and Jarn\'{\i}k has been utilized and extended by many people, e.g.\ {\cite{dani, Barak_GAFA, KWe2}}.
 %\comm{(references to Dani, Weiss, anything else?)}}
 Most recently, in \cite{KMW} three of the authors of the
present paper in the case {$m = 1$} showed that % these nullsets must intersect 
{there exist uncountably many $f$-uniform {row} vectors on} certain submanifolds
and fractals. As a special case, they showed that if {$Y$} is either
a %sphere in $\R^n$ where $n \geq 3$ \comm{(why talk about the sphere, and not about  arbitrary
{connected analytic submanifold of $\R^n$ of  dimension at least 2 not contained in a rational affine hyperplane}, or   a product $\mathcal{C}
\times \cdots \times \mathcal{C}$ of $n \geq 2$ copies of the
{middle-third} Cantor set $\mathcal{C}$, then
{$Y$} contains uncountably many $f$-uniform   vectors for any {approximating function} $f$.
% (interpreted as linear forms, i.e.\ row matrices).
%Furthermore, in
%\cite{Schleischitz}, Schleischitz showed that for $d \geq 4$, for any
%$\mathbf{y} \in \R^d$, there is $\mathbf{x}$ such that both $\mathbf{x}$ and
%$\mathbf{x} + \mathbf{y}$ are singular. He also obtained related
%results for the set of $f$-singular vectors, for some functions
%$f$. {\bf Did he really prove this? His paper is hard to read}.  That
%is, although nullsets, 
%the set of singular vectors (resp., $f$-singular vectors) necessarily
%intersects certain nullsets, as well as translated copies of itself.

\subsection{Diophantine systems}\label{ds} The goal of this paper is to strengthen {the aforementioned} results, 
%namely show
%conditions on $f$ under which 
%that certain fractals and submanifolds contain $f$-uniform matrices/vectors for any {positive non-increasing} function $f$,
%and, additionally, 
namely provide conditions on countably many maps {$\{\varphi_k : k\in\N\}$} from some metric space $Y$  to $M_{m_k,n_k}(\R)$ under which %the intersection $\cap_i\varphi^{-1}\big(D(f)\big)$ is uncountable and dense in $X$.
%there exist many $x\in X$ such that  $\varphi(x)$ is   $f$-uniform. In fact, under some assumptions we
%are able to exhibit countably many maps $\varphi_1, \varphi_2,
%\ldots, $ 
%such that 
there is an uncountable and dense set of $y\in Y$ for which %the vectors  
$\varphi_k(y)$ are $f$-uniform for every $k$. All this will follow from an
abstract result (Theorem \ref{thm: special axiomatic theorem}) which
will generalize many constructions of {singular/uniform} objects  existing in
the literature.  The proof of Theorem \ref{thm: special axiomatic
  theorem} utilizes once again the original strategy introduced by Khintchine.

\smallskip

%As in \cite{KMW} and some earlier work, a
%All of  the results of this paper follow from a
%general abstract theorem. Its scheme of proof %of this result 
%can be traced back
%to Khintchine's original 1926 paper. 
In order to state our construction in full generality we
%need some additional notation. Namely, we
 introduce the notion of   a {\sl Diophantine system}, which is
 a {triple $\mathcal{X} = \left( X, \mathcal{D}, \mathcal{H}\right)$,
  where:
  \begin{itemize}
  \item[$\bullet$] $X$ is a topological space;
  \item[$\bullet$] 
  $\mathcal{D}= \{d_s : s\in  \I\}$ is a sequence of continuous functions on $X$ taking nonnegative values ({\sl generalized distance functions});

 \item[$\bullet$] 
    $\mathcal{H}= \{h_s: s\in  \I\}$ is a sequence of positive real numbers; we will say that $h_s$ is the {\sl height} associated with %resonant sets, so that $h_\alpha$ is the height of $R_\alpha$.
    $d_s$.
   \end{itemize}
  Here $\I$ is a fixed countable set\footnote{{For our exposition it will be convenient to have a freedom of choice of  $\I$ instead of always using $\I = \N$.}}  of indices.  
 %   Here $ \mathcal{A}$ is a fixed countable set of indices. 
    This generalizes the definition of a {\sl Diophantine space}
    originally introduced in \cite{fsy}; namely, 
  by a
    Diophantine space one means a triple $(X, \mathcal{R}, H)$, where
    $X$ is a complete metric space, $\mathcal{R}$ is a {countable} 
dense subset of $X$, and $H : \mathcal{R} \to (0, \infty)$ gives the
heights of points in $\mathcal{R}$.  {To view it as a Diophantine
  system as in our definition one can simply enumerate the elements of
  $\mathcal{R}  = \{r_s: s\in  \I\}$ and let $d_s(x) \df \dist(x,r_s)$
  and  $h_s \df H(r_s)$. Similarly one can treat a more general
  situation when $\mathcal{R}$ is a countable collection of closed
  subsets of $X$ ({\sl resonant sets}), which in our notation are
  simply the zero sets of the generalized distance functions $d_s$. Our
  setup  however {makes it possible} to define the distance from each
  individual set in a different way, as well as count the same set
  several (perhaps infinitely many) times.} 
% This is not essential for our argument, but convenient for notational purposes. 

   Now suppose we are given a Diophantine system {$\mathcal{X} =
     \left( X, \mathcal{D}, \mathcal{H}\right)$. Then we can introduce
     the notion of $f$-uniform points for any function $f: \R_{>0} \to
     \R_{>0}$ (the {\sl approximating function}):
     
     \begin{definition}\label{DirAbstract} \rm
$x\in X$
     is {\sl $(\mathcal{X},f)$-uniform}   if %there exists a positive
                                %$\vre < 1$ such that  
for every sufficiently
large $t$  one can find $s\in\I$ such that 
%a nonzero integer 
%\eq{diverygeneral}
$${
d_s (x)
\le f(t)
  \ \ \ \mathrm{and}  
\ \ h_s
\le  t
.}$$
We will denote by  $\UA_{\mathcal{X}}(f)$ the set of $(\mathcal{X},f)$-uniform points of $X$. 
\end{definition}

 %     saying that %Equivalently one can introduce the   {\em irrationality
%  measure function} associated to $x\in X$:
%$$
%\psi_x(t) \df \inf \{ d_s(x):  h_s\leq t\};
%$$    
%then  \comm{(under some additional assumptions, which suggests that probably it's better to omit these functions alltogether)}  $x\in \UA_{\mathcal{X}}(f)$  if and only if $\psi_x(t) \le f(t)$ for all large enough $t$.}

As the reader can easily check, the following choices define a
Diophantine  {system}, which we refer to as the {\sl standard Diophantine
  {system}} {$\mathcal{X}_{m,n}$}:
 {\eq{classical}{
 \begin{aligned}X = \mr,\  \I =  \Z^m\times (\Z^n\nz),\\ %R_{\vp,\vq} = \{A\in\mr: A\vq = \vp\},\\ &
h_{\vp,\vq} = \|\vq\|, \ d_{\vp,\vq}(A) = \|A\vq - \vp\|{.}\quad\end{aligned}}
It is clear that %when using the standard Diophantine  {system},
 a matrix
is $({\mathcal{X}_{m,n}}, f)$-uniform if and only if it %represents an 
{is} $f$-uniform %system of linear forms 
in the sense of Definition~\ref{Dir}. 

\smallskip
Trivially %, in view of \equ{gdf}, 
every point in the union of resonant sets $d_s^{-1}(\{0\})$ is   $(\mathcal{X},f)$-uniform for any choice of $\mathcal{H}$  and $f$. {In the next section we will state our} main theorem, 
%(Theorem \ref{thm: general axiomatic theorem}) 
{which} provides a set of conditions guaranteeing the existence of uncountably many non-trivial $(\mathcal{X},f)$-uniform points. {More generally, it involves an auxiliary metric space $Y$ and countably many maps from $Y$ to possibly different metric spaces $X_i$, each endowed with its own Diophantine system.}
%{
%\comm{I am not sure what is better, to state the theorem here or to postpone.}
%We are going to state a special case here, and in \S\ref{abstr} explain  how it follows from a more general theorem. 

\subsection{The main result}% and some of its applications}
\label{result}
{To state our main abstract theorem, it will be convenient  to introduce some more terminology. Let $Y$ be a %locally compact %complete 
	metric space and let $\mathcal{L}$, $\mathcal{R}$ be two  collections of proper closed subsets of %a %locally compact 
%metric space 
$Y$.  Say that    $\mathcal{L}$   is 
{\sl   totally
%very 
dense  relative to $\mathcal{R}$} %(to be abbreviated by TD)]
 if 	\eq{vd1}{\bigcup_{L\in \mathcal{L}} L\text{ is dense in }Y,}
and
 \eq{vd2}{\begin{aligned} \forall\text{ open  $W
 \subset                           Y$,  $\forall\,L\in \mathcal{L}$ such that }L  \cap W \neq
                          \varnothing, \text{ and  } \forall\,R\in \mathcal{R}%\ \exists\, \text{open }V\supset \,&L_i  \cap W
                          %\qquad 
                          \\  
                          %\text{there exists }
                           \exists\,L'\in \mathcal{L}
                          \text{ such  that }L' \cap L\cap W  \ne
                          \varnothing\text{ and }L' \not\subset R.\qquad \ 
                          %\qquad\\\ \text{ 
% in a proper affine subspace of $Y$ (resp.,  
%is dense in }Y)&.
\end{aligned}}
%\comm{It'd be nice to draw a picture illustrating the non-totally-dense situation.}
	We will say that  $\mathcal{L}$   is 
{\sl   totally
%very 
dense} if it is    totally
%very 
dense    relative to the collection of all closed nowhere dense subsets of $Y$. In this case  \equ{vd2} is equivalent to \eq{vd3}{\begin{aligned} \forall\text{ open  $W
 \subset                           Y$ and   $\forall\,L\in \mathcal{L}$ such that }L  \cap W \neq
                          \varnothing%\ \exists\, \text{open }V\supset \,&L_i  \cap W
                          \qquad \\  
                          \text{the closure of } \bigcup_{L'\in \mathcal{L}\,:\,L'\cap L \cap W  \ne
                          \varnothing}L'%\text{ \ \ contains  } &V
                        \   \text{ has a non-empty interior}.
                          %\qquad\\\ \text{ 
% in a proper affine subspace of $Y$ (resp.,  
%is dense in }Y)&.
\end{aligned}}}
																																																																																																														
For example, for any $1\le d < n$ the collection {$\mathcal{L}$} of $d$-dimensional rational affine subspaces of $Y = \R^n$ is totally dense, with the union in  {\equ{vd3}} being dense in $\R^n$. More examples will be described %in \S\ref{dioph}
{later in the paper}. On the other hand, the collection of straight lines in $\R^3$ that are parallel to one of the coordinate axes satisfies properties \equ{vd1}  {and  \equ{vd2} with $\mathcal{R} = $ all planes in $\R^3$, but not \equ{vd3}}.

{%Another definition involves two  collections $\mathcal{L}$ and $\mathcal{R}$ of proper closed subsets of %a %locally compact 
%metric space $Y$. L
{Here is another definition which will be important for us throughout the paper.} Let us say that $\mathcal{L}$ \textsl{respects} %$\mathcal{R}$ (or {
a subset $R$ of $Y$ % $\mathcal{R}$ 
%\textsl{is respected by} $\mathcal{L}$) 
if whenever $L\in \mathcal{L}$ %and $R\in \mathcal{R}$ are 
is such that  $L\cap R$ has non-empty interior in $L$ (in the topology induced from $Y$), it follows that $L\subset R$. In other words, elements $L$ of $\mathcal{L}$ are not allowed to behave disrespectfully by intersecting $R$ in an open set and then wandering off. %\comm{Maybe a picture illustrating disrespectful behavior?} 
We will say that $\mathcal{L}$ 
\textsl{respects} $\mathcal{R}$ if it respects every $R\in \mathcal{R}$.  An example: if {each element of} $\mathcal{L}$ is either connected or has no isolated points, it respects any singleton $\{y\}\subset Y$. Another example is given by collections $\mathcal{L}$ and $\mathcal{R}$   consisting of analytic submanifolds of $\R^n$  (where we equip them with the metric inherited from the ambient space $\R^n$) such that every $L\in \mathcal{L}$ is connected, and every $R\in \mathcal{R}$ is closed; see Lemma \ref{lem: crucial} for more details.}

{Our final definition involves a collection $\mathcal{L}$  of proper closed subsets of %a %locally compact 
%metric space 
$Y$, a Diophantine system $\mathcal{X} =
     \left( X, \mathcal{D}, \mathcal{H}\right)$, and a map $\varphi: Y\to X$. Let us say that \textsl{$\mathcal{L}$ is aligned with  $\mathcal{X}$ via  $\varphi$} if for any $L\in \mathcal{L}$ there exists $s\in\I$ such that  $d_{s}|_{\varphi(L)}\equiv 0$; in other words, if $\varphi(L)$ is contained in one of the resonant sets $d_s^{-1}(\{0\})$. In the special case when $Y = X$ and $\varphi = \Id$ we will simply say that $\mathcal{L}$ is aligned with  $\mathcal{X}$. Note that this property is independent both of the choice of  the heights functions $h_s$ and of a reparametrization of generalized
  distance functions as long as their zero sets are fixed.}
     \smallskip
     
     {Now we are ready to state the main theorem of the paper.
     \begin{theorem} \label{thm: special axiomatic theorem}
	Let $Y$ be a locally compact %complete 
	metric space.
        %, and let $S \subseteq Y$ be a non-empty locally
        %closed set. % with no isolated points. %\comm{Actually I (Dima)
                                %am not sure why we need $\R^d$, maybe
                                %an arbitrary metric space will do.}  
	For any $k\in \N$ suppose we are given a Diophantine system  \eq{xk}{\mathcal{X}_k = \big( X_k, \mathcal{D}_k = \{d_{k,s}: s\in\I\}, \mathcal{H}_k = \{h_{k,s}: s\in\I\}\big)}
        %$\mathcal{X}_k = (X_k,\mathcal{R}_k, \mathcal{H}_k, \mathcal{D}_k)$ as above, 
        and
        a non-increasing 
     function $f_k: \R_{>0} \to \R_{>0}$, and let $\varphi_k$
        be a continuous map from $Y$ to $X_k$. %\comm{Any assumptions
                                %on $\varphi_k$?}.  
	Let $\mathcal{L}$ and $\mathcal{R}$ be countable 
        collections of closed   subsets of $Y$ %with empty interior 
        such that 
        $\mathcal{L}$ is 
        %\begin{itemize}
        %\item 
        totally dense relative to $\mathcal{R}$,
        %\item 
        respects $\mathcal{R}$, and
        %\item  
         is aligned with $\mathcal{X}_k$ via $\varphi_k$ for every $k\in\N$. 
        %\end{itemize}
	Then: 
	\smallskip 
	\begin{itemize}
	\item [{\rm (a)}] the set 
	\eq{specialset}{%Y_{\mathrm{sing}} \df
	%\left\{x\in Y :
        %  \forall\,k\in\N, \ \varphi_k(x)\text{ is $(\mathcal{X}_k,f_k)$-uniform }\right\}
         \bigcap_{k} \varphi_k^{-1}\big(\UA_{\mathcal{X}_k}(f_k)\big)
        \smallsetminus  \bigcup_{{R\in\mathcal{R}}} R
	}
	%there exist 
	is %uncountable and 
	dense in $Y$. 
	\smallskip 
	\item [{\rm (b)}] In addition, if every $L\in\mathcal{L}$ is either connected or has no isolated points, the set \equ{specialset} is uncountable.
	\end{itemize}
	% many $x\in S \smallsetminus  \bigcup_{j} R_j$ such that $\varphi_k(x)$ is $(\mathcal{A}_k,H_k,\UA_k,f_k)$-singular $\forall\,k\in\N$.
	 %\comm{No restrictions on $H_k$, $\UA_k$ or $f_k$!}
	\end{theorem}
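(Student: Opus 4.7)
The plan is to adapt the classical Khintchine--Jarn\'{\i}k nested-ball construction to the abstract setting. To prove (a), it suffices to show that the set in \equ{specialset} meets every nonempty open $W\subset Y$. The key mechanism is the alignment hypothesis: for each $L\in\mathcal{L}$ and each $k\in\N$, fix once and for all an index $s_k(L)\in\I$ with $d_{k,s_k(L)}\equiv 0$ on $\varphi_k(L)$. By continuity of $\varphi_k$ and of $d_{k,s_k(L)}$, any $y$ sufficiently close to $L$ satisfies $d_{k,s_k(L)}(\varphi_k(y))\le\eps$, so the index $s_k(L)$ witnesses the $(\mathcal{X}_k,f_k)$-uniform condition at $y$ for every $t$ in the interval $\bigl[h_{k,s_k(L)},\,f_k^{-1}(\eps)\bigr]$, using that $f_k$ is non-increasing.

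Next, I would enumerate $\N\times\mathcal{R}$ as $\{(k_j,R_j):j\in\N\}$ so that each pair appears infinitely often, and inductively construct a nested chain $W\supseteq B_1\supseteq B_2\supseteq\cdots$ of closed balls with diameters tending to $0$, together with $L_j\in\mathcal{L}$ and $y_j\in L_j\cap B_{j-1}^\circ$. At step $j$, the total density of $\mathcal{L}$ relative to $\mathcal{R}$ (applied with $R=R_j$ inside $B_{j-1}^\circ$, using $L_{j-1}$ as the already-intersecting element) supplies some $L_j\in\mathcal{L}$ with $L_j\cap L_{j-1}\cap B_{j-1}^\circ\ne\varnothing$ and $L_j\not\subset R_j$. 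Because $\mathcal{L}$ respects $R_j$, the intersection $L_j\cap R_j$ has empty interior in $L_j$, so a point $y_j\in L_j\cap B_{j-1}^\circ\setminus R_j$ exists. Since $R_j$ is closed, I then take $B_j$ to be a closed ball around $y_j$ with $B_j\subset B_{j-1}^\circ$, $B_j\cap R_j=\varnothing$, and diameter less than $2^{-j}$.

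The hard part will be the chaining: for each fixed $k$, the intervals of $t$ covered by successive indices $s_k(L_j)$ must together exhaust a cofinite half-line, so that $\varphi_k(y^*)$ is $(\mathcal{X}_k,f_k)$-uniform. To arrange this, when choosing $B_j$ I additionally impose $\sup_{y\in B_j}d_{k,s_k(L_j)}(\varphi_k(y))\le\delta_j^{(k)}$ for target bounds $\delta_j^{(k)}$ satisfying $f_k^{-1}(\delta_j^{(k)})\ge h_{k,s_k(L_{j+1})}$, so that the next interval begins before the current one ends. Since this constraint depends on the later choice of $L_{j+1}$, I would use a retroactive-shrinking trick: at step $j+1$, once $L_{j+1}$ is chosen and the finitely many heights $h_{k,s_k(L_{j+1})}$ for $k\in\{k_1,\ldots,k_j\}$ become known, I shrink $B_j$ further (keeping $y_j$ centered and $B_{j+1}\subset B_j$), using continuity to enforce the required bound. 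Local compactness of $Y$ combined with the nestedness of the closed balls then gives $\bigcap_j B_j\ne\varnothing$, and any $y^*$ in this intersection avoids $\bigcup_{R\in\mathcal{R}}R$ and is $(\mathcal{X}_k,f_k)$-uniform for every $k$, proving (a).

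For (b), I would bifurcate at each step. The set $L_j\cap B_{j-1}^\circ\setminus R_j$ is a nonempty relatively open subset of $L_j$, and the additional hypothesis forces it to contain at least two distinct points: in the connected case, any nonempty open subset of a connected Hausdorff space with more than one point is infinite; in the no-isolated-points case, any nonempty open subset of $L_j$ is automatically infinite. Hence I can replace the single choice $(y_j,B_j)$ by two disjoint choices inside the same $L_j\cap B_{j-1}^\circ\setminus R_j$, both satisfying the other constraints, and iterate. The resulting binary tree of nested balls has uncountably many branches, each producing a distinct limit point in \equ{specialset}.
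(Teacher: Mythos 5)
Your overall strategy (a Khintchine--Jarn\'{\i}k nested construction, with an index $s_k(L)$ witnessing alignment, chained to cover a cofinite half-line of parameters $t$) is the right one and matches the paper's. The gap is in the ``retroactive-shrinking trick,'' and it is genuine. You propose: at step $j+1$, after choosing $L_{j+1}$ via total density applied inside $B_j^\circ$, compute the new heights $h_{k,s_k(L_{j+1})}$ and then shrink $B_j$ around $y_j$ to enforce $\sup_{B_j}d_{k,s_k(L_j)}\circ\varphi_k\le\delta_j^{(k)}$. But $L_{j+1}$ was produced by total density relative to the \emph{original} $B_j$: its guaranteed intersection with $L_j$ lies somewhere in $B_j^\circ$, not necessarily near $y_j$, so the shrunken $B_j$ may no longer meet $L_{j+1}$ at all. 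Then no admissible $y_{j+1}\in L_{j+1}\cap B_j^\circ\smallsetminus R_{j+1}$ exists inside the shrunken ball, and the induction stalls. Re-choosing $L_{j+1}$ inside the shrunken ball does not help: the new heights $h_{k,s_k(L_{j+1}')}$ may be larger, forcing a smaller $\delta_j^{(k)}$, hence more shrinking, with no reason for the process to terminate.

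The paper sidesteps this circularity by never shrinking an already-constructed set. It chooses the threshold $T_{\ell+1}$ at step $\ell+1$, after $L_{i_{\ell+1}}$ and its heights are known (so $T_{\ell+1}$ can be made larger than all of them), and then places the \emph{next} open set $\hat U_{\ell+1}$ around a point $z\in L_{i_{\ell+1}}\cap L_{i_\ell}\cap U_\ell$. Since $z\in L_{i_\ell}$, the functions $d_{k,s(k,\ell)}\circ\varphi_k$ (for $k\le\ell$) vanish at $z$, and shrinking $\hat U_{\ell+1}$ around $z$ automatically yields $d_{k,s(k,\ell)}\circ\varphi_k<f_k(T_{\ell+1})$ on $\hat U_{\ell+1}$ by continuity; the right-hand side $f_k(T_{\ell+1})$ is a fixed positive number at this point, so there is nothing circular. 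A final shrink from $\hat U_{\ell+1}$ to $U_{\ell+1}$ avoids $R_{\ell+1}$ while still meeting $L_{i_{\ell+1}}$, using that $\mathcal{L}$ respects $R_{\ell+1}$. In short: control the \emph{previous} step's distance functions on the \emph{next} step's ball, centered at the intersection point, and you never need to revisit an old ball. Your argument becomes correct once you replace ``keep $y_j$ centered and shrink $B_j$'' with ``center $B_{j+1}$ at the point $z\in L_{j+1}\cap L_j\cap B_j^\circ$ and shrink $B_{j+1}$.''

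Your treatment of part (b) by bifurcating at each step is a legitimate alternative to the paper's proof, which instead argues by contradiction: if \eqref{eq:specialset} were countable, one could enlarge $\mathcal{R}$ by those countably many singletons (the hypothesis that each $L\in\mathcal{L}$ is connected or has no isolated points is exactly what keeps \eqref{eq:vd2} and respectfulness intact) and rerun part (a) to get a contradiction. Your bifurcation is more explicit and needs no extra verification that singletons can be adjoined to $\mathcal{R}$; the paper's version is shorter. Either works once the gap in (a) is repaired.
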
}
\ignore{\begin{remark}\label{standard} Let $X = \R^n$, $\mathcal{A}$ be the set
  of rational affine hyperplanes, $H$ be the standard height, and
  $D(x,A) = H(A) \dist(x,A)$, where
  $$\dist(x,A) \df \inf_{a \in A} \|x-a\|$$
  is the Euclidean distance from $x$ to $A$. 
  Then $x\in \R^n$ is
  $(\mathcal{X},f)$-singular if and only 
  if  it is $f$-singular when viewed as a  linear form. But see below
  for more examples.

\end{remark}

\begin{remark}
The notion of $(\mathcal{X}, f)$-singularity belongs to the realm of
{\em uniform approximation}. If we only require
$\psi_{\mathcal{X}}(x,t) \leq f(t)$ for infinitely many $t$, then we
say that $x$ is {\em $f$-approximable}; this notion could be said to belong to the
realm of {\em asymptotic approximation}. Asymptotic approximation will
not concern us in this paper.
  \end{remark}
\smallskip

Below we will be working with {\em indexed collections} $\{L_i\}$. This is
shorthand for a function $i \mapsto L_i$; we stress that we do not
assume that this function is injective.

This is our main abstract theorem. }

        \begin{remark}\label{doesnotmatter} \rm 
{A few comments are in order.}
\begin{itemize}
\item[(1)] %Note that i
In the above statement we are working with an {\em indexed collection} $\mathcal{X}_k$, which is
shorthand for a function  $k \mapsto \mathcal{X}_k$;
we do not 
assume that this function is injective. Also it might happen  that the
underlying set $X_k$ of $\mathcal{X}_k = (X_k, \mathcal{D}_k,
\mathcal{H}_k)$ arises several times, with distinct generalized
distance functions or height functions.

%Note also that n
\item[(2)]  Neither the approximation functions $f_k$  nor  the heights
$\mathcal{H}_k$ 
appear in the
conditions of the theorem. {Moreover,  the collections $\mathcal{D}_k$ of generalized distance functions appear only through their zero sets  $d_{k,s}^{-1}(\{0\})$.} Thus if the %conditions (a)--(d) 
{assumptions of the above theorem} are satisfied,
then they are satisfied for all positive {non-increasing }functions
$f_k$, all choices of the heights, {and all choices of generalized
  distance functions as long as their zero sets are fixed.}
%{It is also clear} that 
\item[(3)] {It is easy to see that} Theorem \ref{thm: special axiomatic theorem} %also 
holds
for finite intersections, 
that is, when the {set} of
indices $k$ is finite; this follows immediately from the statement of
the theorem, taking $f_k, \varphi_k, \mathcal{X}_k$ the same for all
sufficiently large $k$.
\end{itemize} 
\end{remark}
          Theorem \ref{thm: special axiomatic theorem} % will be 
 {is} proved
          in \S \ref{abstr},
          % {where it will be derived from an even more general Theorem \ref{thm: general axiomatic theorem}}. %It is easily seen
   %   \comm{I am not convinced that we  need it though.}   
    {and in \S\ref{dioph} we %will 
show}  that it implies a
          strengthening of Theorem \ref{thm: classical theorem}(a),
          {making it} possible to avoid countably many proper analytic
          submanifolds in $M_{m,n}(\R)$ (see Theorem \ref{cor:
            matrices}).  
        In the remainder of the introduction 
%{next section} 
we  present a list of additional
        applications of Theorem \ref{thm: special axiomatic theorem}.

            \ignore{It turns out that our abstract result can often be applied in a simplified form, which we state below.

\begin{corollary} \label{thm: special axiomatic theorem}
	Let $X$ be a locally compact %complete 
	metric space, and let  $\mathcal{A}$ be a countable collection of subsets of $X$. Suppose the following conditions are satisfied:
 	
	\begin{enumerate}%[label=(\alph*)]
		\item \label{item: 1}  $\cup_{A\in\mathcal{A}} A$ is dense in $X$;
                  	\item \label{item: 2} for
                          every $A,B \in\mathcal{A}$ and every open  $W
                          \subset                           X$ such that $A  \cap W \neq
                          \varnothing$, there is $C \in\mathcal{A}$ so that $A \cap C\cap W  \ne
                          \varnothing$ and $C \not\subset B$; 
                        \item \label{item: 3}
                      for every  $A,B \in\mathcal{A}$ 
              %    finite subset s $\mathcal{A}_0$ and  
              % $\mathcal{J}_0\subset\N$
                with %$i_0 \notin \mathcal{A}_0$ and
                $A \not
                \subset B$ 
                      and any  $ x_0 \in  X$, % for $j \in \mathcal{J}_0$, 
		we have $$A= \overline{
                  A\smallsetminus\Big(\{x_0\}  \cup   %\bigcup_{i\in \mathcal{A}_0} L_i
	%	\cup
        % \bigcup_{j\in \mathcal{J}_0}
                  B\Big)}.$$

	\end{enumerate} 
	
	% \item
   Then for any choice of 
   \begin{itemize}
   \item
   the height function  $H: \mathcal{A} \to \R_{+}$, 
   
   \item
a generalized distance
      function
%  \item
    $D: X\times \mathcal{A}\to \R_+$, thereby defining the Diophantine space $\mathcal{X} = (X,\mathcal{A}, H, D)$, 
    
   \item
a
        %, and let $S \subseteq Y$ be a non-empty locally
        %closed set. % with no isolated points. %\comm{Actually I (Dima)
                                %am not sure why we need $\R^d$, maybe
                                %an arbitrary metric space will do.}  
	%For any $k\in \N$ suppose we are given  
       non-increasing positive function $f:\R_+\to \R_+$, %\comm{Any assumptions
                                %on $\varphi_k$?}.  
	\end{itemize}
	the set 
	\eq{set}{X_{\mathrm{sing}} \df
	\left\{x\in X \smallsetminus \cup_{A\in\mathcal{A}} A:
        x \text{ is $(\mathcal{X},f)$-singular }\right\}
	}
	%there exist 
	is uncountable and dense in $X$.
	% many $x\in S \smallsetminus  \bigcup_{j} R_j$ such that $\varphi_k(x)$ is $(\mathcal{A}_k,H_k,\UA_k,f_k)$-singular $\forall\,k\in\N$.
	 %\comm{No restrictions on $H_k$, $\UA_k$ or $f_k$!}
	\end{corollary}

\begin{proof} We apply Theorem \ref{thm: general axiomatic theorem} with $k=1$, $X_1 = X = Y$, $\varphi = $ the identity map, and $\{L_i = R_i\}$ being two enumerations of $\mathcal A$. Then condition (a) of Theorem \ref{thm: general axiomatic theorem} is obvious, and (b),(c),(d) follow from (1),(2),(3) respectively.%In particular, given $0<\eps\le 1$, choosing $T_0(L,k,\eps) > \log\left(\|c\|_\infty/\eps\right)$ completes the proof.
\end{proof}

        \begin{remark}
A prototypical example of the situation when the assumptions of the above corollary are satisfied is given by $X = \R^d$ and $\mathcal A = $ the collection of all $k$-dimensional rational affine subspaces of $\R^d$, where $k \ge 1$. The latter restriction is essential: clearly  Theorem \ref{thm: general axiomatic theorem} and Corollary \ref{thm: special axiomatic theorem} are
not useful for simultaneous approximations, where the resonant sets
 are singletons and axiom (2) of   Corollary \ref{thm: special axiomatic theorem} is not satisfied. As
in \cite{KMW}, our
strategy for deducing information about simultaneous approximation is
to use Theorem \ref{thm: general axiomatic theorem} or Corollary \ref{thm: special axiomatic theorem}  for linear forms, and then apply a
transference argument. 
          \end{remark}}

%\section{{A preview of applications of Theorem \ref{thm: special axiomatic theorem}}}\label{appl}

\subsection{Matrices %singular for approximation 
{uniformly approximable} with 
  different weights}\label{subsec: weights intro}
Many results in
   \da\ extend to  {\sl approximation with  weights}, an approach in
   which one 
   treats differently  {different linear forms $A_i $ (the rows
   of $A$), as well as different} components of $\vq$.
   
%Denote by $\CC$ the cone  $(0,\infty)^{m+n}$, and f
For  an
$(m+n)$-tuple of positive weights\footnote{{In the literature it is often assumed that the weight vectors are normalized so that $\sum_i\alpha_i = \sum_j\beta_j = 1$,  but for {this work} it makes no difference, since our results are valid for arbitrary choice of approximating functions.} %\comm{Feel free to edit.}
}  
{$    \vw = (\va,\vb) \in \R_{>0}^{m+n} $, where  $\va  = (\alpha_1,\dots,\alpha_m)\in\R_{>0}^m$ and $\vb = (\beta_1,\dots,\beta_n)\in\R_{>0}^n$,} %are $m$ and
%$n$-dimensional respectively)
%\text{ with } \sum_{i=1}^{m} \alpha_i =   \sum_{i=1}^n \beta_i = 1,
%\end{equation}
let us introduce \textit{quasi-norms}
\begin{align*}
\|\x\|_{\va}\df\max_i|x_i|^{1/\alpha_i} \quad\textrm{and}\quad \|\y\|_{\vb}\df\max_j|y_j|^{1/\beta_j}
\end{align*}
{on $\R^m$ and $\R^n$ respectively}.
Then, for %$\b\in\R^m$ and for 
$f$
%n approximation 
%function $\psi : \R_{>0} \to \R_{>0}$ 
as above, one  says that  \amr\ is $(f, \vw)$-{\sl uniform},
denoted by $A\in \UA_{m,n}(f, \vw)$, if 
for every sufficiently
large $t$  one can find
%a nonzero integer 
$\vq  \in \Z^n\nz$ and $\vp \in \Z^m$
 with
\eq{diweighted}{
\|A\vq -\vp\|_{\va}\le f(t)
  \ \ \ \mathrm{and}  
\ \ \|\vq\|_{\vb}\le t
.}
 % following the notation introduced in \cite[\S 2.1]{Kleinbock1998},
 In other words, we are considering the solvability of the
 system \begin{equation*}\label{weightsystem} 
  \begin{cases} \  \ \left| A_i   \vq  - p_i\right| \le  f(t)^{\alpha_i}, &  i = 1,\dots,m;\\ \qquad \qquad  \ \ \,|q_j|\le t^{\beta_j},& j = 1,\dots,n.\end{cases}
\end{equation*}
%with solutions $\vp \in \Z^m$ and $\vq \in \Z^n \smallsetminus \{{\bf 0}\}$.
See \cite{KR} for a discussion. Clea{rly the unweighted case
corresponds to the choice {$\vw %= \vw_0 
= (1,\dots,1)$}. {As in Definition \ref{Dir}, let us denote by  $\UA^*_{m,n}(f, \vw)$ the set of totally
irrational $A\in \UA_{m,n}(f, \vw)$.}

{Clearly for any $    \vw = (\va,\vb)$ and $f$ as above %and any positive $f$ 
one has  \eq{reduction}{\UA_{m,n}(f, \vw)\supset \UA_{m,n}\big(\tilde f\big),\text{ where }\tilde f(t) \df f\big(t^{1/\min_j\beta_j}\big)^{\max_i\alpha_i}.}
%\comm{It is strange that we haven't noticed that before. Anything else we should change here?}
Hence it immediately follows from Theorem \ref{thm: classical theorem}(a) that   $\UA^*_{m,n}(f, \vw)$	is
     uncountable and dense.}
%where $\tilde f(t) \df f(t^{1/\min_j\beta_j})^{\max_i\alpha_i}$.} 
%Let us denote $\alpha_i\df$} It was essentially shown in \cite{KMW} that {for any $n \geq 2$,  {any $\vw \in \R_{>0}^{n+1} $ and every positive {non-increasing} function $f$, there are uncountably many totally
%irrational} row vectors which are  $(f, \vw)$-uniform. \comm{We should replace ``It was essentially shown in \cite{KMW} that" by saying that} 
Using {Theorem \ref{thm: special axiomatic theorem}} {one} can extend {this} to  {matrices} which are simultaneously $(f_k, \vw_k)$-uniform for countably many $k$.   

\begin{theorem}\label{thm: intersection with weights}
  {Let $n \geq 2$ and  $m \in \N$. 
  \begin{itemize}
  \item[\rm (a)] For any sequence $\vw_1, \vw_2, \ldots \in
  \R_{>0}^{m+n}$ of  weight vectors and any sequence  $f_1, f_2, \ldots$ of positive non-increasing
  functions, the intersection   $\bigcap_k\UA^*_{m,n}(f_k,
  \vw_k)$ is {dense and} uncountable. 
   \item[\rm (b)]
  {The intersection $\bigcap_{\vw \in \R_{>0}^{m+n}}\UA^*_{m,n}(f,
  \vw)$ is {dense and} uncountable for any  positive \linebreak non-increasing $f$}.
  \end{itemize}
 \ignore{ there are uncountably many totally irrational $A \in M_{m,n}(\R)$
  such that $A \in \UA_{m,n}(f,
  \vw)$ for any weight $\vw \in \R_{>0}^{m+n}$. Moreover, if  
 $\vw_1, \vw_2, \ldots \in
  \R_{>0}^{m+n}$ are  weight vectors, and 
  $f_1, f_2, \ldots$ {are} positive non-increasing
  functions, then there are uncountably many totally irrational
  $A\in M_{m,n}(\R)$ that are $(f_k, \vw_k)$-uniform,
  for all $k \in \N$.}}
\end{theorem}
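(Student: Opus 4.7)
The plan is to deduce both parts from Theorem \ref{thm: special axiomatic theorem}. For part (a), take $Y = M_{m,n}(\R)$, $X_k = Y$, and $\varphi_k = \Id$ for all $k$, with Diophantine systems $\mathcal{X}_k$ indexed by $\I = \Z^m \times (\Z^n \nz)$ and defined, for $s = (\vp, \vq)$ and $\vw_k = (\va_k, \vb_k)$, by
\[
d_{k, s}(A) \df \|A\vq - \vp\|_{\va_k}, \qquad h_{k, s} \df \|\vq\|_{\vb_k},
\]
so that $\UA_{\mathcal{X}_k}(f_k) = \UA_{m,n}(f_k, \vw_k)$. Let $\mathcal{L} = \{L_{\vp, \vq} : (\vp, \vq) \in \I\}$ where $L_{\vp, \vq} \df \{A : A\vq = \vp\}$, and let $\mathcal{R}$ be the countable collection of all proper rational affine subspaces of $M_{m,n}(\R)$, so that $\bigcup_{R \in \mathcal{R}} R$ is precisely the complement of the set of totally irrational matrices. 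Each $L_{\vp, \vq}$ is an affine subspace of dimension $m(n-1) \geq m \geq 1$ (using $n \geq 2$), hence connected.

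Alignment of $\mathcal{L}$ with every $\mathcal{X}_k$ is immediate, since $L_{\vp, \vq}$ is the zero set of $d_{k, (\vp, \vq)}$. The respect condition holds because if $L\in\mathcal{L}$ and $R\in\mathcal{R}$ are affine subspaces with $L \cap R$ relatively open in $L$, then $L\cap R$ has the same dimension as $L$, forcing $L\cap R = L$ and hence $L \subset R$. Density of $\bigcup_L L$ is clear. The main task is to verify the second condition \equ{vd2}: given open $W$, $L \in \mathcal{L}$ with $L \cap W \neq \varnothing$, and $R \in \mathcal{R}$, produce $L' \in \mathcal{L}$ meeting $L \cap W$ with $L' \not\subset R$. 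If $L \not\subset R$, take $L' = L$; otherwise, pick any rational matrix $A^* \in L \cap W$ and, letting $D \in \N$ clear the denominators of $A^*$, consider the candidates $L_j \df L_{D A^* e_j,\, D e_j}$ for $j = 1, \ldots, n$, each of which passes through $A^*$.

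The main technical (though routine) step is to verify that at least one $L_j$ is not contained in $R$. The underlying linear subspace of $L_j$ is $T_j \df \{A \in M_{m,n}(\R) : A e_j = 0\}$, the space of matrices with vanishing $j$-th column; since $T_1 + T_2 = M_{m,n}(\R)$ (using $n \geq 2$), no proper linear subspace of $M_{m,n}(\R)$ can contain both, so there exists $j$ for which $T_j$ is not contained in the underlying linear subspace $T_R$ of $R$. For such $j$, since $A^* \in L \subset R$ and $L_j = A^* + T_j$, we obtain $L_j \not\subset R$. This verifies \equ{vd2}, and Theorem \ref{thm: special axiomatic theorem} now yields density (from its part (a)) and uncountability (from its part (b), each $L$ being connected) of the set $\bigcap_k \UA_{m,n}(f_k, \vw_k) \setminus \bigcup_R R = \bigcap_k \UA^*_{m,n}(f_k, \vw_k)$, proving part (a) of the present theorem.

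For part (b), replacing $f$ by $\min(f, 1)$ if necessary, we may assume $f(t) \leq 1$ for all large $t$. Set $\vw_M \df \bigl((M, \ldots, M),\, (1/M, \ldots, 1/M)\bigr) \in \R_{>0}^{m+n}$. A direct comparison of the defining inequalities shows that $\UA_{m,n}(f, \vw_M) \subset \UA_{m,n}(f, \vw)$ for every $\vw = (\va, \vb)$ with $\alpha_i \leq M$ and $\beta_j \geq 1/M$. Since any $\vw \in \R_{>0}^{m+n}$ satisfies these bounds for $M$ sufficiently large, one has $\bigcap_{M \in \N} \UA^*_{m,n}(f, \vw_M) \subset \bigcap_{\vw \in \R_{>0}^{m+n}} \UA^*_{m,n}(f, \vw)$; applying part (a) to the sequence $\{\vw_M\}_{M \in \N}$ with $f_k = f$ for all $k$ yields density and uncountability of the left-hand side, and hence of the right-hand side, completing the proof.
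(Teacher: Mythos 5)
Your proposal is correct, and it rides the same framework as the paper's (apply Theorem \ref{thm: special axiomatic theorem} with $Y = X_k = M_{m,n}(\R)$, $\varphi_k = \Id$, $\mathcal{L} = \{L_{\vp,\vq}\}$, $\mathcal{R}$ = rational affine subspaces). The genuine difference is in how you verify density: the paper establishes the stronger, absolute total density of $\{L_{\vp,\vq}\}$ in Proposition~\ref{verydensematrices} via a somewhat lengthy computation (analyzing $\overline{E_B} = B + R_{<n}$ and showing it has nonempty interior), which then automatically supplies \equ{vd2} relative to any collection $\mathcal{R}$ of closed nowhere dense sets and lets them deduce the stronger Theorem~\ref{cor: weighted matrices} (avoiding arbitrary closed analytic submanifolds, and with uniformity over sets $\mathcal{W}_k$ of weights via the $\sup/\inf$ Diophantine system of \eqref{eq: Shreyasi and Nattalie}). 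You instead verify \equ{vd2} directly and only relative to rational affine subspaces, using the neat observation that $T_1 + T_2 = M_{m,n}(\R)$ when $n \geq 2$, so no proper affine subspace $R$ can contain both $A^* + T_1$ and $A^* + T_2$ through a rational point $A^*\in L\cap R\cap W$; this is shorter and more elementary, but buys exactly the stated theorem and nothing more. For part (b), the paper writes $\R_{>0}^{m+n}$ as a union of bounded weight-sets $\mathcal{W}_k$ and invokes the $\mathcal{W}$-uniform version, whereas you reduce to a countable family of single weights $\vw_M$ by a monotonicity-in-weights observation after truncating $f$ at $1$; both are correct, and again yours is a touch more elementary while yielding a slightly weaker intermediate statement.
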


% {We note that part {\rm (b)} has been independently proved by Datta and Tamam (private communcation). 
In \S\ref{dioph} we %will
 prove {an even} stronger statement, %allowing intersections over %certain
%{other} uncountable collections of weights, 
{with extra uniformity in choosing approximating vectors} and making it possible to avoid
countably many proper 
analytic submanifolds (see Theorem \ref{cor: weighted
  matrices}).

\subsection{{Uniform approximation of higher order on submanifolds}}\label{mflds}
In \S\ref{sbmfolds} we %will 
show that in the row vector case ($m=1$) vectors satisfying the
conclusions of Theorem \ref{thm: intersection with
  weights}  exist on many manifolds and fractals. % \comm{(I added some results of this type, please check)};
For example the following theorem, which  is a special case of Theorem \ref{cor: linear forms manifolds}, was already proved in \cite{KMW}: 
\begin{theorem}\label{cor: linear forms manifolds simple}
Let $Y$ be a connected analytic submanifold  of $\R^n$  of   dimension at least  $2$ not contained in a rational affine hyperplane, and let $f$ be  any
positive non-increasing function. Then  the intersection of  $\UA^*_{1,n}(f)$ with $Y$ is dense and uncountable. 
%$s \geq 2$, , and let $L \subset \R^d$ be a totally irrational affine subspace of dimension $s$. Then $L$
%contains 
%uncountably many totally irrational points % $\mathbf{x}$
%which are $f$-singular of order
%$s-1$. 
\end{theorem}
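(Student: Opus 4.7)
The plan is to apply Theorem \ref{thm: special axiomatic theorem} with a single index ($k=1$), the standard Diophantine system $\mathcal{X}_{1,n}$ from \equ{classical} with $f_1=f$, and the inclusion $\varphi:Y\hookrightarrow\R^n$. The resonant sets of $\mathcal{X}_{1,n}$ are the rational affine hyperplanes $H_{p,\vq}=\{\x\in\R^n:\vq\cdot\x=p\}$ for $(p,\vq)\in\Z\times(\Z^n\sm\{0\})$. I take both $\mathcal{L}$ and $\mathcal{R}$ to be the countable collection of connected components (in $Y$) of all intersections $Y\cap H_{p,\vq}$; since $Y$ is not contained in any rational affine hyperplane, each element of $\mathcal{L}$ is a proper closed analytic subset of $Y$. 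A point $\y\in Y$ is totally irrational precisely when it avoids every proper rational affine subspace of $\R^n$, equivalently every rational affine hyperplane, equivalently $\y\notin\bigcup\mathcal{R}$. Hence the set in \equ{specialset} coincides with $Y\cap\UA^*_{1,n}(f)$, and the theorem will give both density and (from part (b)) uncountability once its hypotheses are checked.

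Alignment of $\mathcal{L}$ with $\mathcal{X}_{1,n}$ via $\varphi$ is tautological by construction. That $\mathcal{L}$ respects $\mathcal{R}$ is the identity principle for real-analytic functions on connected analytic sets, essentially the content of Lemma \ref{lem: crucial}: a rational linear form defining some $R\in\mathcal{R}$ that vanishes on a nonempty relatively open subset of a connected analytic $L\in\mathcal{L}$ must vanish on all of $L$, so $L\subset R$.

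The substantive hypothesis is that $\mathcal{L}$ is totally dense relative to $\mathcal{R}$. Density of $\bigcup\mathcal{L}$ in $Y$ comes from the implicit function theorem: given $\y\in Y$ and an integer vector $\vq$ not normal to $T_\y Y$, the image of $\x\mapsto\vq\cdot\x$ on a small piece of $Y$ near $\y$ is an open interval, so any rational $p$ in that interval produces a rational hyperplane $H_{p,\vq}$ meeting $Y$ arbitrarily close to $\y$. For the non-trivial clause \equ{vd2}, I fix $L\in\mathcal{L}$ meeting an open $W$ and $R\in\mathcal{R}$, and seek $L'\in\mathcal{L}$ with $L'\cap L\cap W\neq\varnothing$ and $L'\not\subset R$. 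Here the hypothesis $\dim Y\geq 2$ enters decisively: $L$, as a component of $Y\cap H_{p,\vq}$, has dimension at least $\dim Y-1\geq 1$ at each of its regular points, so I can pick distinct regular points $\y_0,\y_1\in L\cap W$, and by analyticity together with $Y\not\subset R$ also a point $\y_2\in (Y\cap W)\sm R$. A sufficiently fine rational approximation $H''$ to a generic real hyperplane through $\y_0$ and $\y_2$ will, by two applications of the implicit function theorem, still cut $L$ near $\y_0$ inside $W$ and still meet $Y$ near $\y_2$ hence outside $R$; any connected component $L'$ of $Y\cap H''$ through the resulting intersection with $L\cap W$ then serves.

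The main obstacle is exactly this joint perturbation step: one must refine the rational approximation $H''$ enough to simultaneously preserve an intersection with $L\cap W$ and produce a point of $Y$ away from $R$. The dimension hypothesis $\dim Y\geq 2$ is indispensable here — without it $L$ could be a discrete set, so $L\cap W$ would contain no room to host an incident rational hyperplane, and the first condition would fail; this is also why the curve case requires genuinely different methods. Once the total density condition is verified, uncountability follows from Theorem \ref{thm: special axiomatic theorem}(b), since each $L\in\mathcal{L}$ is connected by construction.
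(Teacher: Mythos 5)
Your plan follows the same broad strategy as the paper (apply Theorem \ref{thm: special axiomatic theorem} with $\mathcal{L}$ built from rational-hyperplane sections of $Y$), but the decomposition is genuinely different, and your route has a gap in the total-density verification.

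The paper proves this as a special case of Theorem \ref{cor: linear forms manifolds} (with $g=1$), and it argues \emph{locally}: for each $\y\in Y$ it invokes Lemma \ref{localtd}, which uses the implicit function theorem to write $Y\cap W$ as a graph $\{(x,\psi(x)):x\in O\}$ over a ball $O\subset\R^d$, and then takes $\mathcal{L}_W$ to consist of the slices $L_M\cap W\cap Y=\{(x,\psi(x)):x\in M\cap O\}$, where $M$ ranges over rational affine hyperplanes of the chart domain $\R^d$ and $L_M=M\times\R^{n-d}$ is the corresponding ``vertical'' rational hyperplane in $\R^n$. Total density of $\mathcal{L}_W$ then follows at once from total density of $\{M\cap O\}$ inside a Euclidean ball, which is immediate, and each $L_M\cap W\cap Y$ is automatically connected since it is a graph over the connected set $M\cap O$. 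You instead take $\mathcal{L}$ globally as \emph{all} connected components of $Y\cap H_{p,\vq}$, which looks more natural but makes \equ{vd2} much harder, because you lose all control over the component structure of $Y\cap H$.

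The gap is in your final step. You choose $\y_0\in L\cap W$ and $\y_2\in (Y\cap W)\setminus R$, approximate a hyperplane through them by a rational $H''$, and then declare ``any connected component $L'$ of $Y\cap H''$ through the resulting intersection with $L\cap W$ then serves.'' But $L'$ is defined as the component through the point near $\y_0$, and there is no reason it should contain the point of $Y\cap H''$ near $\y_2$; if it does not, you cannot conclude $L'\not\subset R$. The two points can be in different components of $Y\cap H''$. To make this work you would need an extra localization step: choose $\y_2$ inside a small chart $B\ni\y_0$ on which $Y\cap B$ is a graph and on which (after suitable care with transversality) $Y\cap H''\cap B$ is connected, so that the component through $\y_0$ is forced to reach the point off $R$. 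This is precisely the move the paper makes preemptively by applying the abstract theorem inside the chart $W$ and restricting $\mathcal{L}$ to chart-aligned hyperplanes $L_M=M\times\R^{n-d}$, whose traces $M\cap O$ are convex and hence trivially connected. As stated, your verification of \equ{vd2} does not go through; with the localization patch it would, but at that point you are essentially reproving Lemma \ref{localtd}.

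One minor point: the point $\y_1$ you introduce is never used, and the phrase ``by analyticity together with $Y\not\subset R$'' for the existence of $\y_2$ should really appeal to the fact that $R$, being a proper closed analytic subset of connected $Y$ (via Lemma \ref{lem: crucial}), is nowhere dense in $Y$.
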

Our proof, based on Theorem \ref{thm: special axiomatic theorem}, makes it possible to streamline the argument and derive other related results. In particular, one benefit of our level of abstraction is that we can similarly study
%In particular, in \S\ref{sbmfolds} we will consider 
uniform approximation \textit{of higher order}. 
Let us introduce the following 

\begin{definition}\label{orderg} \rm
Given a  %positive
function $f: \R_{>0} \to \R_{>0}$ and $1\le g \le n$, say %, following \cite{kw1} \comm{(add other references?)}, 
that $\mathbf{x} \in \R^n$ is {\sl $f$-uniform of order $g$} (notation:   $\x\in\UA_{1,n}(f; g)$, or $\x\in\UA^*_{1,n}(f; g)$ if $\x$ is totally irrational) 
if %there exists a positive $\vre < 1$ such that 
for every sufficiently 
large $t>0$  one can find 
%a nonzero integer 
linearly independent $\mathbf{q}_1, \ldots,
  \mathbf{q}_g$ in $\Z^n$ 
  and $p_1, \ldots, p_g \in \Z$ such that
  $$\text{ for all } i \in \{1, \ldots, g\}, \ \ \  \|\mathbf{q}_i\|
  \leq t \ \text{ and } \ | \mathbf{q}_i \cdot \mathbf{x} -p_i| \leq
  f(t).
  $$
\end{definition}
\ignore{  If $\va  = (\alpha_1,\dots,\alpha_m)\in\R_{>0}^m$, one can similarly define $(f,\va)$-uniform approximability of order $g$ by replacing $\|\mathbf{q}_i\|$ in the above system of inequalities with $\|\mathbf{q}_i\|_\va$. Denote by $\UA_{1,n}(f, \va; g)$ the set of those, and by $\UA^*_{1,n}(f, \va; g)$ the set of totally irrational vectors in $\UA_{1,n}(f, \va; g)$.}
%define the {\em irrationality measure
 % function of order $g$ of $\mathbf{x}$} to be the function
%$
%\psi^{(g)}_{\mathbf{x}}: \R_{>0} \to \R_{>0}$
%defined by $$ \psi^{(g)}_{\mathbf{x}}(t) \df
%\inf \left\{\max_{i=1, \ldots, g}|\mathbf{q_i} \cdot \mathbf{x} - p_i| :
%\mathbf{q_1} , \ldots, \mathbf{q}_g \in \mathcal{B}^{(g)}_t\big( \Z^d
% \big )
%,\ p_1, \ldots, p_g \in \Z \right \},
%$$
%where
%$
%\mathcal{B}^{(g)}_t \left(\Z^d \right)$ denotes the $g$-tuples $\mathbf{q}_1,
%\ldots,\mathbf{q}_g 
%\in \Z^{d } $ which are
%  linearly independent, and satisfy $ \|q_i \| \leq t$ for all
%  $i$. For a positive function $f$, we say
%  $\psi^{(g)}_{\mathbf{x}}(t) \leq f(t)$ for all large enough $t$.
  Note that %if $\mathbf{x}$ is contained in a 
  any rational affine subspace of
codimension $g$ is an intersection of $g$ rational affine hyperplanes with linearly independent normal vectors,
%then it is perpendicular to  $g$ linearly independent integer vectors 
and hence its every point is $f$-uniform of order $g$ for any
 $f$. The following generalization of Theorem \ref{cor: linear forms manifolds simple} is a special case of Theorem \ref{cor: linear forms manifolds}.
%analogous results hold for column vectors ($n=1$, $m \geq
%2$), assuming an appropriate condition on the functions $f_k$.  %\comm{Not sure if we are going to have such results in the paper.}
\begin{theorem}\label{thm: using transference}
Let $Y$ be a connected analytic submanifold  of $\R^n$  of   dimension   $d\ge 2$ not contained in a rational affine hyperplane,  let $f$ be  any
positive non-increasing function, {and let $1\le g \le d-1$}.
%, and let  $\va\in\R^n_{>0}$. 
Then  the intersection of $Y$ with $\UA^*_{1,n}(f; g)$ is dense and uncountable.
% as long as $g\le d-1$. 
%$s \geq 2$, , and let $L \subset \R^d$ be a totally irrational affine subspace of dimension $s$. Then $L$
%contains 
%uncountably many totally irrational points % $\mathbf{x}$
%which are $f$-singular of order
%$s-1$. 
\end{theorem}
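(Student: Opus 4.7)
The plan is to deduce Theorem \ref{thm: using transference} from the abstract Theorem \ref{thm: special axiomatic theorem} by packaging order-$g$ uniform approximation as a Diophantine system on $\R^n$ and providing a countable collection $\mathcal{L}$ of resonant slices of $Y$ that verify the axioms.

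Concretely, I will take $X = \R^n$ and build the Diophantine system $\mathcal{X}$ whose index set $\I$ consists of tuples $s = (\q_1, \dots, \q_g;\, p_1, \dots, p_g)$ with $\q_1, \dots, \q_g \in \Z^n$ linearly independent and $p_1, \dots, p_g \in \Z$, with $h_s \df \max_i \|\q_i\|$ and $d_s(\x) \df \max_i |\q_i \cdot \x - p_i|$. Then $\x \in \UA_{\mathcal{X}}(f)$ is exactly the condition $\x \in \UA_{1,n}(f; g)$, and each zero set $d_s^{-1}(\{0\})$ is a codimension-$g$ rational affine subspace of $\R^n$. Let $\varphi \colon Y \hookrightarrow \R^n$ be the inclusion. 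For $\mathcal{L}$ I will take a countable enumeration of the connected analytic components of positive dimension of $Y \cap V$, as $V$ ranges over codimension-$g$ rational affine subspaces of $\R^n$ with $Y \not\subset V$; for $\mathcal{R}$, I will take $\{Y \cap U : U \text{ a proper rational affine subspace of } \R^n\}$, so that removing $\bigcup \mathcal{R}$ enforces total irrationality of the output. By construction every $L \in \mathcal{L}$ is contained in some resonant set $d_s^{-1}(\{0\})$, so $\mathcal{L}$ is aligned with $\mathcal{X}$ via $\varphi$; and each $L$ is a connected analytic subset of $Y$, so $\mathcal{L}$ respects $\mathcal{R}$ via an analyticity argument along the lines of Lemma \ref{lem: crucial}.

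The substantive step is verifying that $\mathcal{L}$ is totally dense relative to $\mathcal{R}$. Here the hypothesis $1 \le g \le d-1$ is essential: since $d \ge g+1$, a generic rational codimension-$g$ subspace meets $Y$ in an analytic set of dimension $d - g \ge 1$, and for two such subspaces $V, V'$ in general position, $Y \cap V \cap V'$ has expected dimension $d - g - 1 \ge 0$. Combined with the density of rational affine subspaces among all affine subspaces of a given codimension, and with $Y$ not contained in any rational affine hyperplane, standard analyticity considerations yield \equ{vd1}. For \equ{vd2}: given open $W \subseteq Y$, $L = Y \cap V \in \mathcal{L}$ meeting $W$, and $R = Y \cap U \in \mathcal{R}$, if $L \not\subset R$ one takes $L' = L$; otherwise $L \subset R$ and one constructs a second rational codimension-$g$ subspace $V'$ meeting $V$ transversally through a point of $Y \cap W$ and not contained in $U$, and takes $L'$ to be the relevant component of $Y \cap V'$.

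The main obstacle is this last subcase $L \subset R$ of \equ{vd2}: producing an auxiliary $V'$ that simultaneously yields a nonempty transverse intersection $Y \cap V \cap V' \cap W$ and avoids containment in $U$. This is where an analytic-geometric perturbation argument enters (the \emph{transference} alluded to in the theorem's title, passing between $V$ and a nearby rational codimension-$g$ subspace): analyticity of $Y$ is used to realize the expected-dimension intersection, and density of rational subspaces to escape $U$. Once \equ{vd2} is in hand, Theorem \ref{thm: special axiomatic theorem}(a) and (b) produce a dense uncountable subset of $\varphi^{-1}\bigl(\UA_{\mathcal{X}}(f)\bigr) \setminus \bigcup \mathcal{R} = Y \cap \UA^*_{1,n}(f; g)$; uncountability uses that every $L \in \mathcal{L}$ is connected.
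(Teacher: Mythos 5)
Your overall plan — feed the order-$g$ Diophantine system from \equ{systemorderg} into Theorem \ref{thm: special axiomatic theorem} with resonant slices coming from rational codimension-$g$ affine subspaces — is correct in outline, and this is indeed what the paper does. But there is a genuine gap in the way you construct and verify the collection $\mathcal{L}$, and it is exactly the point where the paper works harder.

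You take $\mathcal{L}$ to be the positive-dimensional connected components of $Y\cap V$, over \emph{all} rational codimension-$g$ affine subspaces $V$ of $\R^n$, and then claim \equ{vd1} and \equ{vd2} follow from ``standard analyticity considerations.'' This is not a routine step, and as written it fails. The problem is that a generic rational codimension-$g$ subspace $V$ whose translate passes near a point $\y\in Y$ need not intersect $Y$ near $\y$ at all — transversality of the linear parts is not enough to guarantee the affine intersection $Y\cap V$ is nonempty in the relevant neighborhood, let alone of dimension $d-g$. So \equ{vd1} (density of $\bigcup_{L\in\mathcal{L}}L$ in $Y$) does not follow from density of rational subspaces plus genericity alone, and the analogous difficulty reappears (worse) in your $L\subset R$ subcase of \equ{vd2}. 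You also attribute the fix to a ``transference'' between $V$ and a nearby rational subspace, but that is a misreading: the label \emph{using transference} refers to how this theorem is later consumed by Lemma \ref{lem: transference} to prove Theorem \ref{thm: improvement rate}, not to a transference argument inside its own proof.

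The paper sidesteps the intersection-nonemptiness issue entirely by making the argument \emph{local}. Via Lemma \ref{localtd}, near any $\y\in Y$ one first writes $Y\cap W$ as a graph $\{(x,\psi(x)):x\in O\}$ over an open ball $O\subset\R^d$, and then takes $\mathcal{L}_W$ to be the slices $L_M\cap W\cap Y$ where $M$ ranges over rational codimension-$g$ affine subspaces of $\R^d$ and $L_M=\{(x_1,\dots,x_n):(x_1,\dots,x_d)\in M\}$ is the coordinate-cylinder lift, a codimension-$g$ rational affine subspace of $\R^n$. Under the graph parameterization, $L_M\cap W\cap Y$ corresponds exactly to $M\cap O$, which is nonempty and $(d-g)$-dimensional whenever $M$ passes through $O$; total density of $\mathcal{L}_W$ in $Y\cap W$ is then inherited directly from total density of rational codimension-$g$ subspaces of $\R^d$ in $O$ (using $d\ge 2$, $1\le g\le d-1$). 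Theorem \ref{thm: special axiomatic theorem} is then applied to $Y\cap W$; since this yields density and uncountability in every such $W$, the global conclusion follows. The respecting property is handled exactly as you suggest, via Lemma \ref{lem: crucial}, and alignment is immediate since $L_M$ is a codimension-$g$ rational affine subspace. To rescue your write-up you would essentially need to restrict to cylinder-type $V$ adapted to a local chart — which is the paper's Lemma \ref{localtd} — rather than all rational codimension-$g$ subspaces at once.
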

%The case $g=1$ was considered in \cite{KMW}; our proof, based on Theorem \ref{thm: special axiomatic theorem}, allows us to streamline the argument and derive other generalizations, in particular to the study of uniform approximation on certain fractals.} \comm{Should edit it later.}

\subsection{Vectors which are $k$-singular for all $k$}\label{ksing}
A vector {$\mathbf{x} %=(x_1, \ldots, x_n)
 \in \R^n$} is called {\sl  $k$-singular} {\cite{KPW}} if for any
$\vre>0$ there is $Q_0>0$ so that for all $Q > Q_0$ there exists a
polynomial {$P \in \Z[X_1, \ldots, X_n]$} such that $$\deg (P) \leq k, \
H(P) \leq Q,\text{ and }{|P(\mathbf{x})| \leq \vre Q^{-N(k,n)}}.$$ Here $H(P)$ is the {\sl  height} of $P$, defined as the maximum of the
absolute value of the 
coefficients of $P$, $\deg(P)$ is the total degree of $P$, and 
\begin{equation}\label{eq: def n}
{N(k,n) = \left( \begin{matrix} k+n \\ n\end{matrix} \right) -1}.
\end{equation}
{Note that when $k=1$, $k$-singularity is the same as singularity of
$\mathbf{x}$ considered as a row vector in  $M_{1,n}(\R)$, and
the existence of singular vectors in certain manifolds and fractals is
one of the main results of \cite{KMW}.}

The study of $k$-singular
vectors is related to approximation of $\mathbf{x}$ by vectors with
algebraic coefficients of degree at most $k$; see
e.g.\;\cite{Yu}. Following \cite{KPW}, let us say that $\mathbf{x}$ is {\sl 
$k$-algebraic} if there is a nonzero polynomial ${P \in \Z[X_1, \ldots,
X_n]}$ such that $P(\mathbf{x})=0$ and $\deg P =k$, and we say it is
{\sl  algebraic} if it is $k$-algebraic for some $k$. It is easy to see
that a $k$-algebraic  vector is $k'$-singular for every $k'\geq k$,
and it was shown in \cite{KPW} that for {$n \geq 2$} and for each $k$ there
are $k$-singular vectors which are not $k$-algebraic. In %this paper,
{\S \ref{sec: k singular}}
we show:
\begin{theorem}\label{thm: k singular all k}
If {$n \geq 2$}, then there {is a dense  uncountable set of} {$\mathbf{x} \in \R^n$}
such that $\mathbf{x}$ is $k$-singular for all $k \in \N$, but
$\mathbf{x}$  is not algebraic. 
\end{theorem}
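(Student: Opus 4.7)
My plan is to derive Theorem~\ref{thm: k singular all k} from Theorem~\ref{thm: special axiomatic theorem} applied to a family of Diophantine systems indexed by $k\in\N$, each encoding $k$-singularity. Take $Y = \R^n$. For each $k\in\N$ define $\mathcal{X}_k = (X_k, \mathcal{D}_k, \mathcal{H}_k)$ by $X_k = \R^n$, $\varphi_k = \Id$, and let the index set $\I_k$ be the countable set of nonzero polynomials $P\in\Z[X_1,\ldots,X_n]$ of total degree at most $k$, with $d_{k,P}(\mathbf{x})\df |P(\mathbf{x})|$ and $h_{k,P}\df H(P)$. Let $f_k(t)\df t^{-N(k,n)}/\log(t+2)$, which is positive, non-increasing, and satisfies $t^{N(k,n)}f_k(t)\to 0$. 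Unwinding Definition~\ref{DirAbstract} shows that $\mathbf{x}\in \UA_{\mathcal{X}_k}(f_k)$ implies that $\mathbf{x}$ is $k$-singular: for any $\varepsilon>0$, choose $Q_0$ so large that the defining property holds for $t\ge Q_0$ and $1/\log(Q_0+2)<\varepsilon$, then the polynomial furnished by taking $t = Q\ge Q_0$ satisfies $|P(\mathbf{x})|\le\varepsilon Q^{-N(k,n)}$ and $H(P)\le Q$.

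For the collections of subsets of $Y$, take $\mathcal{L}$ to be the countable collection of all rational affine lines in $\R^n$ (a nonempty collection since $n\ge 2$), and $\mathcal{R}\df\{Z_P : 0\ne P\in\Z[X_1,\ldots,X_n]\}$, where $Z_P$ is the real zero locus of $P$; observe that $\bigcup_{R\in\mathcal{R}} R$ is precisely the set of algebraic points in $\R^n$. The hypotheses of Theorem~\ref{thm: special axiomatic theorem} are verified as follows. Each rational affine line is cut out by a degree-one integer polynomial, which belongs to $\I_k$ for every $k$, giving alignment of $\mathcal{L}$ with every $\mathcal{X}_k$. For any $L\in\mathcal{L}$ and any nonzero $P$, the restriction $P|_L$ is a polynomial in one real variable, so if it vanishes on an open subarc of $L$ it vanishes on all of $L$; hence $\mathcal{L}$ respects $\mathcal{R}$. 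Density of $\bigcup_{L\in\mathcal{L}}L$ in $\R^n$ is immediate.

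The only delicate step is verifying \equ{vd2}: given an open $W\subset\R^n$, a line $L\in\mathcal{L}$ meeting $W$, and $Z_P\in\mathcal{R}$, I pick a rational point $\mathbf{y}\in L\cap W$ (possible since rational points are dense in the rational line $L$) and a rational point $\mathbf{z}\in W\setminus Z_P$ (possible since $Z_P$ is a proper algebraic subset of $\R^n$, hence nowhere dense, and $\Q^n$ is dense in $W$); then the line $L'$ through $\mathbf{y}$ and $\mathbf{z}$ is rational, meets $L$ at $\mathbf{y}\in W$, and contains $\mathbf{z}\notin Z_P$, so $L'\not\subset Z_P$. Here the hypothesis $n\ge 2$ is essential: for $n=1$ there is no ``other'' rational direction with which to escape $Z_P$. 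With all hypotheses in place and each $L$ connected, Theorem~\ref{thm: special axiomatic theorem}(b) furnishes a dense uncountable subset of $\bigcap_{k\in\N}\UA_{\mathcal{X}_k}(f_k)\setminus\bigcup_{R\in\mathcal{R}}R$; by the implication recorded in the first paragraph, every such $\mathbf{x}$ is non-algebraic and $k$-singular for every $k\in\N$, yielding the theorem.
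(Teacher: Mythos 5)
Your proof is correct, and it takes a genuinely different route from the paper's. The paper encodes $k$-singularity through the Veronese map $\varphi_k : \R^n \to \R^{N(k,n)}$ whose coordinates are the non-constant monomials of degree $\le k$; then $\x$ is uniformly approximable of degree $k$ iff $\varphi_k(\x) \in \UA_{1,N(k,n)}(f)$, and one works with the \emph{standard} Diophantine system $\mathcal{X}_{1,N(k,n)}$ on the ambient space, checking alignment via the projection $\pi : \R^{N(k,n)} \to \R^n$ (since $\varphi_k(L) \subset \pi^{-1}(L)$ for a rational hyperplane $L$). You instead construct a bespoke Diophantine system directly on $\R^n$ with polynomials of degree $\le k$ as indices and $|P(\x)|$, $H(P)$ as distance and height, sidestepping the Veronese map entirely, and you take $\mathcal{L}$ to be rational affine \emph{lines} rather than rational affine hyperplanes. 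Both choices work when $Y = \R^n$; the paper's use of hyperplanes is dictated by its more general Theorem 5.2 (transcendental analytic submanifolds $Y$), where $\mathcal{L}$ has to come from the local parametrization of Lemma \ref{localtd} with $g = 1$, so your streamlined setup buys simplicity at the cost of not extending directly to that setting. Two small remarks: (i) Theorem \ref{thm: special axiomatic theorem} is stated with a single fixed index set $\I$ for all $k$, whereas your $\I_k$ varies with $k$; this is harmless but should be patched, e.g.\ take $\I = \Z[X_1,\dots,X_n]\setminus\{0\}$ and declare $d_{k,P}\equiv 1$ for $\deg P > k$ so such $P$ never satisfy the $\UA$ inequality. (ii) Your phrase ``each rational affine line is cut out by a degree-one integer polynomial'' is imprecise for $n \ge 3$ (a line is cut out by $n-1$ such equations), but for alignment you only need $L$ to be \emph{contained} in the zero set of some $P$ of degree one, which is true.
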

Theorem \ref{thm: k singular all k} answers a question raised in
\cite[\S 2.3]{KPW} regarding the existence of vectors which are $(k,
\vre)$-Dirichlet improvable for some fixed $\vre$, for all positive
integers $k$. 
%Moreover, as we will see in \S \ref{sec: k singular}, 
%vectors $\mathbf{x}$ satisfying the conclusion of 
%Theorem \ref{thm: k singular all k} can be found in many manifolds and
%fractals. 
%The conditions on the manifolds
%and fractals appearing in the present paper are more restrictive than the
%conditions which appeared in \cite{KMW}; see \S \ref{sec: k singular} for the precise
{Note that %in \S \ref{sec: k singular} 
we are going to prove a  statement much more general than  Theorem \ref{thm: k singular all k}; namely, we will discuss vectors that are  uniformly $f_k$-approximable  of   degree $k$ for each $k$, where $f_k$ is a sequence of arbitrary positive non-increasing functions; moreover, those vectors will be found in many manifolds and
fractals.}

\subsection{%Sumsets of the collections of totally irrational singular vectors
{$\Sing^*_{1,n}+ \Sing^*_{1,n} = \R^n$ and generalizations}}
{\ignore{\st{For a given $d$ and a positive function $f$,  let $\UA_d(f) \df
\UA_{1,d}(f)$ (and $\UA^*_d(f) \df
\UA^*_{1,d}(f)$) 
denote the set of (totally irrational) 
$f$-uniformly approximable vectors in
$\R^d$. If $\vw$ is a collection of weights on $\R^d$, we denote by
$\UA_d(f, \vw)$ and $\UA^*_d(f, \vw)$ the set of
(totally irrational) $(f, \vw)$-uniformly approximable vectors in
$\R^d$.} I don't think we need this new notation, it's already confusing when we switch from rows to columns, and there is nothing too complicated with the original notation. I also removed weights from the introduction to make it easier to handle.}
{Here we} consider the case $m = 1$. It is {known \cite{dfsu} that for rapidly decaying $f$ %decaying very fast 
the sets  $\UA_{1,n}(f)$ 
%of totally irrational uniformly $f$-approximable linear forms in $n$ variables %are rather small, both in terms of their Lebesgue measure and
have very small \hd. The next theorem shows that when $n\ge 3$ the sum of two such sets is the whole space.}
%On the other hand it is easy to see that for  $n\ge 2$ the sum of two such sets is the whole space; that is, %for any two weight vectors $\vw_1,\vw_2\in \R_{>0}^{n+1}$ one has 
%$\UA_{1,n}(f) + \UA_{1,n}(f) = \R^n$, simply because each of the summands contains coordinate hyperplanes. What is more difficult to show is that the same property holds if we remove all rational affine hyperplanes; this we are able to prove only when $n \ge 3$.
\begin{theorem}\label{thm: schleischitz extension}
  If $n \geq 3$, then for any {two  non-increasing functions $f_1,f_2:\R_{>0} \to \R_{>0}$} %and any 
  %set
  %$\vw$ 
 % of weights,
 %two weight vectors $\vw_1,\vw_2$,
 one has
 \eq{sumset}{{\UA^*_{1,n}(f_1) + \UA^*_{1,n}(f_2)} = \R^n. }
  %In particular
 % $$\R^d =\UA^*_d(f) + \UA^*_d(f) 
%.$$
\end{theorem}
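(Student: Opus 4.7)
The plan is to apply Theorem \ref{thm: special axiomatic theorem} in a setting tailored to handle both summands simultaneously. Fix $\mathbf{z}\in\R^n$. It suffices to produce $\mathbf{x}\in\R^n$ such that $\mathbf{x}\in\UA^*_{1,n}(f_1)$ and $\mathbf{z}-\mathbf{x}\in\UA^*_{1,n}(f_2)$, whence $\mathbf{z}=\mathbf{x}+(\mathbf{z}-\mathbf{x})$ is the desired decomposition. To this end, take $Y=\R^n$, $k\in\{1,2\}$, $X_1=X_2=\R^n$ equipped with the standard row-vector Diophantine system $\mathcal{X}_{1,n}$ (whose resonant sets are the rational affine hyperplanes), and maps $\varphi_1=\Id$ and $\varphi_2(\mathbf{x})=\mathbf{z}-\mathbf{x}$. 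Let $\mathcal{R}$ be the countable collection of all rational affine hyperplanes in $\R^n$ together with their translates by $\mathbf{z}$, so that the condition $\mathbf{x}\notin\bigcup_{R\in\mathcal{R}}R$ is equivalent to $\mathbf{x}$ and $\mathbf{z}-\mathbf{x}$ both being totally irrational.

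The key choice is the collection
\[
\mathcal{L}:=\bigl\{H_1\cap(\mathbf{z}-H_2):H_1,H_2\subset\R^n\text{ rational affine hyperplanes with linearly independent normals}\bigr\}.
\]
Each $L\in\mathcal{L}$ is a closed affine subspace of dimension $n-2\geq 1$, in particular connected; this is the only place where the hypothesis $n\geq 3$ enters. By construction $L\subset H_1$ and $\mathbf{z}-L\subset H_2$, so $\mathcal{L}$ is aligned with $\mathcal{X}_1$ via $\varphi_1$ and with $\mathcal{X}_2$ via $\varphi_2$. Since both $L\in\mathcal{L}$ and $R\in\mathcal{R}$ are affine subspaces, $\mathcal{L}$ automatically respects $\mathcal{R}$.

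The main work is showing that $\mathcal{L}$ is totally dense relative to $\mathcal{R}$. Density of $\bigcup_{L\in\mathcal{L}}L$ in $\R^n$ follows from a Dirichlet-type argument: given $\mathbf{y}_0\in\R^n$ and $\epsilon>0$, apply Dirichlet's theorem to $\mathbf{y}_0$ and to $\mathbf{z}-\mathbf{y}_0$ to produce rational pairs $(p_i,\mathbf{q}_i)$ with linearly independent $\mathbf{q}_i$ and arbitrarily small residues $|\mathbf{y}_0\cdot\mathbf{q}_1-p_1|$ and $|(\mathbf{z}-\mathbf{y}_0)\cdot\mathbf{q}_2-p_2|$, then solve the resulting $2\times 2$ linear system to find a small $\mathbf{e}$ with $\mathbf{y}_0+\mathbf{e}\in H_1\cap(\mathbf{z}-H_2)$. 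The nontrivial clause of total denseness is the following: given $L=H_1\cap(\mathbf{z}-H_2)\in\mathcal{L}$, an open $W\subset\R^n$ with $L\cap W\neq\varnothing$, and $R\in\mathcal{R}$, one must produce $L'\in\mathcal{L}$ with $L'\cap L\cap W\neq\varnothing$ and $L'\not\subset R$. If $L\not\subset R$ one takes $L'=L$; otherwise, assuming $R$ is a rational hyperplane with normal $\mathbf{r}$, one sets $L':=H_1'\cap(\mathbf{z}-H_2)$ with $\mathbf{q}_1'\in\Z^n$ chosen outside the two-dimensional subspaces $\operatorname{span}(\mathbf{r},\mathbf{q}_2)$ and $\operatorname{span}(\mathbf{q}_1,\mathbf{q}_2)$, and then scaled by a large integer so that some $p_1'\in\Z$ satisfies $H_1'\cap L\cap W\neq\varnothing$. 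The first genericity ensures $L'\not\subset R$ and the second ensures that $H_1'$ meets $L$ transversally, so the interval of admissible $p_1'$ values has positive length which becomes arbitrarily large after scaling. The symmetric case $R=\mathbf{z}-R'$ is handled by replacing $H_2$ instead of $H_1$.

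This last verification is the main obstacle, since one must simultaneously satisfy a Diophantine density requirement (that $H_1'$ hits the open set $L\cap W$) and a generic transversality requirement (that $L'$ escapes $R$), using only scaling of a carefully chosen direction. Once these conditions are in hand, Theorem \ref{thm: special axiomatic theorem}(b) yields a dense, uncountable set of $\mathbf{x}\in\R^n$ satisfying $\mathbf{x}\in\UA^*_{1,n}(f_1)$ and $\mathbf{z}-\mathbf{x}\in\UA^*_{1,n}(f_2)$, which gives $\mathbf{z}\in\UA^*_{1,n}(f_1)+\UA^*_{1,n}(f_2)$, as required.
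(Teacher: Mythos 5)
Your overall plan is essentially the paper's: fix $\mathbf{z}$, apply Theorem \ref{thm: special axiomatic theorem} with the collection of codimension-two affine subspaces of the form (rational hyperplane) $\cap$ ($\mathbf{z}$-image of rational hyperplane), and check total density relative to the relevant countable family of hyperplanes. The cosmetic differences (you package the problem via two maps $\varphi_1=\Id$, $\varphi_2(\mathbf{x})=\mathbf{z}-\mathbf{x}$ into two copies of the standard system, whereas the paper fuses everything into a single Diophantine system $\mathcal{X}_{\mathbf{z}}$; you use $\mathbf{z}-H_2$ where the paper uses $L_{p',\mathbf{q}'}+\mathbf{z}$) make no mathematical difference.

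Where you diverge is in the proof of the key total density lemma. The paper (Lemma \ref{thm: intersection TD}) proves it via Sublemma \ref{sublemma: TD} — that for any affine hyperplane $N$, rational hyperplanes sliced against $N$ form a totally dense family in $N$ — and a translation argument, concluding that the closure of the union of admissible $L'$ contains the union of \emph{two} distinct hyperplanes, hence cannot lie in any single one. This gives total density relative to \emph{all} affine hyperplanes in one stroke. You instead argue per-$R$: pick a generic rational $\mathbf{q}_1'$ avoiding two bad $2$-dimensional subspaces, then scale so some integer translate of $H_1'$ hits $L\cap W$. The idea is sound and the scaling step is correct (the second genericity condition, $\mathbf{q}_1'\notin\operatorname{span}(\mathbf{q}_1,\mathbf{q}_2)$, guarantees the interval of admissible $p_1'$ has positive length). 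However, there is an under-justified edge case in your first genericity claim: if $\mathbf{r}$ is proportional to $\mathbf{q}_2$ (which forces $R=\mathbf{z}-H_2$ when $L\subset R$), then $\mathbf{q}_1'\notin\operatorname{span}(\mathbf{r},\mathbf{q}_2)$ does \emph{not} rescue you, since every $L'=H_1'\cap(\mathbf{z}-H_2)$ is contained in $R$ regardless of $H_1'$. You gesture at "the symmetric case," but that phrase addresses $R$ presented as $\mathbf{z}-R'$, not the possibility that a rational $R$ accidentally equals $\mathbf{z}-H_2$. This is patchable — simply replace $H_2$ as well when $\mathbf{r}\parallel\mathbf{q}_2$ — but should be made explicit. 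The paper's route via Sublemma \ref{sublemma: TD} sidesteps this bookkeeping entirely and in exchange delivers the slightly stronger conclusion (total density relative to the collection of \emph{all} affine hyperplanes, needed for the unstated generalization mentioned after the lemma), at the cost of an extra auxiliary result.
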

See Schleischitz \cite[\S
3]{Schleischitz} for related work. 
%\textcolor{red}{This is my suggestion for how to refer to
%  Schleischitz's work. It used to say 
%`This extends a result proved by Schleischitz in \cite[\S
%3]{Schleischitz}.' DK: OK with me.} 
{We %will 
prove this theorem in \S\ref{sumsets} %in a   more general weighted form, 
and discuss several extensions.}

\subsection{Improved rates of singularity  {for column vectors}} \label{subsec: rates intro} 
%\comm{I am skipping this introduction for now. Let's edit it after \S\ref{sec: transference} is finalized.}
Recall from Theorem \ref{thm: classical theorem} that for $n>1$, any $m\in\N$ and  
any positive non-increasing $f$ one
can find {an $m\times n$} matrix which is totally irrational and %which is
$f$-uniform. For {$n=1$ and $ m>1$}, that is for the case of column
vectors, this is no longer the case. {Indeed, as was mentioned in {Remark \ref{exponents}}, % following Theorem \ref{thm: classical
%  theorem}, 
 for all totally irrational  $\x\in\R^m \cong M_{m,1}(\R)$ one has $\hat
\omega(\mathbf{x})\le 1$, and, according to Theorem \ref{thm: classical
  theorem}(b), this  maximum value is attained on an uncountable dense set.} %However
%We would like to know the precise condition on $f$ ensuring
%the existence of $f$-uniform matrices in $\R^n \cong M_{n,1}(\R)$, where $n
%\geq 2$. 
%To 
%formalize this, we define the {\em uniform Diophantine exponent of
%  $\mathbf{x} \in \R^d $} to be the supremum of $a$
%so that $\mathbf{x}$ is 
%$\phi_a$-uniform, where as before we have $\phi_a(t) =
%t^{-a}$. Namely, $\hat{\omega}(\mathbf{x})$ is the 
%supremum of 
%\[
  %\begin{split}
  %  & \hat{\omega}(\mathbf{x}) \df \\ & \sup
 %\left\{a> 0 : \exists t_0 \, \forall
 % t >t_0 \, \exists q \in \N, \mathbf{p} \in \Z^d \text{
 %   s.t.\,\equ{digeneral}} \text{ holds for $A = \mathbf{x}$ and 
 %   $f=\phi_a$}\right\}. 
% \end{split}
%\]
%Using a transference argument \comm{(change it!)} and 
%Theorem \ref{thm: classical
 % theorem}, Jarn\'{\i}k  showed in 1959 \cite{Jarnik} that there are \comm{totally irrational}
%$\mathbf{x} \in \R^d, d \geq 2,$ for which $\hat
%\omega(\mathbf{x}) = 1.$
%As
%mentioned above, in \cite{KMW} it was shown that many fractals and
%submanifolds contain vectors satisfying $\omega(\mathbf{x}) \geq
%\frac{1}{m-1}$. Thus it is

It is natural to inquire about the value of $\hat \omega$ that %points 
{vectors} in
certain fractals and submanifolds may attain. {One can approach this question by the standard transference argument as in  \cite[Ch.~V, Thm.\ II]{Cassels_book}  and \cite{Jarnik38} and show that any column vector $\mathbf{x}\in {\R^n\cong M_{n,1}(\R)}$ %such that 
with $\hat
\omega(\mathbf{x}^T) = \infty$ satisfies \eq{transfbound}{\hat
\omega(\mathbf{x}) \ge \frac1{{n}-1}.} Therefore an application of Theorem \ref{cor: linear forms manifolds simple} coupled with transference will produce, for manifolds $Y$ as in that theorem,
%$f$-uniform column vectors on manifolds $Y$ as in that theorem only for functions $f$ with $\liminf_{t\to\infty} t^{ \frac1{n-1}}f(t) = \infty$. \comm{Is this correct? please check!}
a dense uncountable set of $\x\in Y$ satisfying  \equ{transfbound}.}

{It is natural to seek an improvement of the above bound; however replacing $ \frac1{{{n}}-1}$ with $1$ on an arbitrary analytic submanifold of $\R^{{n}}$ is an impossible task. Indeed,} in \cite{KM} it was shown
that certain submanifolds do not contain {column vectors} $\mathbf{x}$ with $\hat
\omega(\mathbf{x}) = 1$. Namely, letting {$H_{{n}} \in
\left(\frac12,1\right)$ be the unique 
positive root of the equation $x + \cdots + x^{{{n}}+1} =1$, it was proved that any $\mathbf{x}$ in the sphere $\{(x_1, \ldots,
x_{{n}}): \sum x^2_i =1\} $ satisfies $\hat \omega(\mathbf{x})  \leq
H_{{{n}}-1}$, and any $\mathbf{x}$ in the paraboloid $\{(x_1, \ldots, x_{{n}})
: x_{{n}} = \sum_{i<{{n}}} x_i^2\}$ satisfies $\hat \omega(\mathbf{x})  \leq
H_{{{n}}}$. 
Furthermore, in \cite{KMW} it was shown that  there are
$d$-dimensional affine subspaces $L \subset \R^{{n}}$ such that
any $\mathbf{x} \in L$ satisfies $\hat \omega(\mathbf{x})
\leq \frac{d+1}{{{n}}-d}, $ which is strictly smaller than {$1$ for $d < \frac{{n}-1}2 $}.
%On the other hand, in \cite{KMW} is was shown that a large class
%of submanifolds, including the sphere, paraboloid and affine subspaces
%of dimension at least 2, always contain $\mathbf{x}$ with $\hat
%\omega(\mathbf{x}) \geq \frac{1}{d-1}$. 
In \S\ref{sec: transference}, using uniform approximation of higher order, i.e.\ Theorem \ref{thm: using transference}}, {together with a transference argument}, we are able to improve %on this last result 
{the bound in  \equ{transfbound}}
and prove the following

\begin{theorem}\label{thm: improvement rate}
{Let $Y$ be a {connected analytic submanifold  of ${\R^n\cong M_{n,1}(\R)}$  of   dimension %at least
 $d\ge 2$ not contained in a rational affine hyperplane. 
Suppose that  a {continuous non-decreasing}  function $f$ satisfies
 \begin{equation}\label{eq: goes to infty monotonically }
t^{\frac{1}{{n}-d+1}}\cdot 
f(t) \to \infty\,\,\,\, \text{ monotonically as }  t\to \infty.
\end{equation}
Then  the intersection of  $\UA^*_{{n},1}(f)$ with $Y$ is dense and uncountable. 
In particular, there exists a dense and  uncountable set of totally irrational $\mathbf{x} \in
Y$ satisfying
$\hat\omega(\mathbf{x}) \geq \frac{1}{{n}-d+1}$}. }
\end{theorem}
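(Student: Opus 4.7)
The plan is to deduce Theorem~\ref{thm: improvement rate} from Theorem~\ref{thm: using transference} by a transference argument based on Minkowski's second theorem and Mahler duality. The guiding idea is that $\tilde f$-uniform approximability of $\mathbf{x}$ as a row vector \emph{of order} $g=d-1$, which Theorem~\ref{thm: using transference} supplies densely on $Y$, forces simultaneous $f$-uniform approximability of $\mathbf{x}$ as a column vector, provided the auxiliary approximation function $\tilde f$ is chosen suitably in terms of $f$.

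For the transference inequality, fix $\mathbf{x}\in\R^n$ and consider the unimodular lattice $\Lambda_{\mathbf{x}}\subset\R^{n+1}$ whose vectors are $(\mathbf{q},\,p-\mathbf{q}\cdot\mathbf{x})$ with $(\mathbf{q},p)\in\Z^n\times\Z$, together with the symmetric box $C_s:=[-s,s]^n\times[-\tilde f(s),\tilde f(s)]$. Lattice points in $C_s$ encode exactly the row approximations of $\mathbf{x}$ (viewed as a row) of height $\le s$ and error $\le \tilde f(s)$. If $\mathbf{x}\in \UA^*_{1,n}(\tilde f;\,d-1)$, then $\lambda_{d-1}(\Lambda_{\mathbf{x}},C_s)\le 1$ for all large $s$. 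Minkowski's second theorem, together with $\mathrm{vol}(C_s)\asymp s^n\tilde f(s)$, forces $\lambda_1\cdots\lambda_{n+1}\asymp (s^n\tilde f(s))^{-1}$, so the remaining $n-d+2$ minima must satisfy $\lambda_{n+1}(\Lambda_{\mathbf{x}},C_s)\gtrsim(s^n\tilde f(s))^{-1/(n-d+2)}$. Mahler duality yields the dual bound $\lambda_1(\Lambda_{\mathbf{x}}^{*},C_s^{*})\lesssim(s^n\tilde f(s))^{1/(n-d+2)}$; since $\Lambda_{\mathbf{x}}^{*}$ is, up to a permutation of coordinates, the lattice associated with simultaneous approximation of the column $\mathbf{x}$, this produces $(q,\mathbf{p})\in(\Z\times\Z^n)\setminus\{0\}$ satisfying $|q|\le T(s)$ and $\|q\mathbf{x}-\mathbf{p}\|\le F(T(s))$, where $T(s)\asymp s^{n/(n-d+2)}\tilde f(s)^{-(n-d+1)/(n-d+2)}$. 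A short algebraic simplification then reveals the crucial identity
$$F(T(s))=\bigl(s^{d-1}/T(s)\bigr)^{1/(n-d+1)},$$
all constants depending only on $n$. This is where the exponent $1/(n-d+1)$ enters.

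Next, choose $\tilde f$ so that the transferred column rate matches $f$. Equating $(s^{d-1}/T)^{1/(n-d+1)}=f(T)$ yields $s^{d-1}=T f(T)^{n-d+1}$. Hypothesis~\eqref{eq: goes to infty monotonically } is precisely what guarantees that $T\mapsto (Tf(T)^{n-d+1})^{1/(d-1)}$ is continuous, monotone, and tends to $\infty$, hence admits a continuous monotone inverse $s\mapsto T(s)$ on some $[s_0,\infty)$. Define
$$\tilde f(s):=s^{n/(n-d+1)}/T(s)^{(n-d+2)/(n-d+1)},$$
which by construction is consistent with the scale relation of the previous step. A direct calculation shows that $\tilde f(s(T))=T^{-(d-2)/(d-1)}f(T)^{n/(d-1)}$; since $d\ge 2$ (so the first factor is non-increasing in $T$) and $f$ is non-increasing, this is non-increasing in $T$, hence $\tilde f$ is positive and non-increasing in $s$ as required.

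Finally, apply Theorem~\ref{thm: using transference} to $Y$, the function $\tilde f$, and order $g=d-1$: the set $Y\cap \UA^*_{1,n}(\tilde f;\,d-1)$ is dense and uncountable. By the transference inequality, each such totally irrational $\mathbf{x}$ belongs to $\UA^*_{n,1}(c_n f)$ for a constant $c_n$ depending only on $n$, and replacing $f$ with $c_n^{-1}f$ (which still satisfies~\eqref{eq: goes to infty monotonically }) shows that $Y\cap\UA^*_{n,1}(f)$ is dense and uncountable. The ``in particular'' statement on $\hat\omega$ follows by specializing to, e.g., $f(t)=t^{-1/(n-d+1)}\log t$. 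The main technical obstacle is the transference inequality itself: one must carefully track which successive minimum controls which convex body under Mahler duality and verify the identity $F(T(s))=(s^{d-1}/T(s))^{1/(n-d+1)}$ from which the exponent $1/(n-d+1)$ originates; the monotonicity of the implicitly defined $\tilde f$, which relies on the monotone-divergence half of~\eqref{eq: goes to infty monotonically }, is a secondary but delicate bookkeeping point.
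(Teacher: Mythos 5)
Your proof is correct and follows the same high-level strategy as the paper: use Theorem~\ref{thm: using transference} to produce, densely on $Y$, row vectors which are $\tilde f$-uniform of order $g=d-1$ for a suitably constructed auxiliary function $\tilde f$, and then transfer this to column-vector $f$-uniformity. The auxiliary function you construct, $\tilde f(s(T)) = T^{-(d-2)/(d-1)}f(T)^{n/(d-1)}$, agrees (up to a constant depending only on $n$) with the function $h$ used in the paper's proof of its Theorem~\ref{cor: vectors manifolds}; the scaling identities you derive match the paper's \eqref{equalities1}--\eqref{equalities2}. Where you genuinely diverge is in how the transference inequality is established. The paper's Lemma~\ref{lem: transference} is a hands-on argument: it spans a subspace $L$ by the $g$ independent integer points in $\Pi^{\tau,\vre}$, exploits the equality $\mathrm{covol}(L/\Lambda) = \mathrm{covol}(L^\perp/\Lambda^\perp)$, estimates volumes of the sections $\Omega$, $\Omega^\perp$ (with a case split on the angle between $L$ and the hyperplane orthogonal to $(1,\x)$), and then invokes only Minkowski's first (convex body) theorem on $\Omega^\perp$. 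You instead work with the full lattice $\Lambda_{\x}$ and the box $C_s$, deduce $\lambda_{n+1}\gtrsim(s^n\tilde f(s))^{-1/(n-d+2)}$ from Minkowski's second theorem together with $\lambda_1,\ldots,\lambda_{d-1}\le 1$, and then pass to $\Lambda_{\x}^*$ via Mahler's transference theorem. Both routes are valid; yours invokes heavier standard machinery (Minkowski II and Mahler duality, hence absolute constants depending only on $n$, which you correctly absorb by rescaling $f$), while the paper's is more self-contained and avoids the successive-minima apparatus altogether. One minor point: like the paper's own proof, your monotonicity check for $\tilde f$ tacitly assumes $f$ is non-increasing, even though the theorem as printed says ``non-decreasing''; this appears to be a typo in the statement, and the paper's proof of Theorem~\ref{cor: vectors manifolds} relies on the same assumption when asserting that $h$ is non-increasing.
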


%Let $Y$ be a {connected analytic submanifold  of $\R^m\cong M_{m,1}(\R)$  of   dimension at least $d\ge 2$ not contained in a rational affine hyperplane. Then there exists a dense and  uncountable set of totally irrational $\mathbf{x} \in
%Y$ satisfying $\hat\omega(\mathbf{x}) \geq \frac{1}{m-d+1}$}. 

    \begin{remark}\label{dt} \rm 
{%We note that transference is not the only possible approach to constructing uniformly approximable column vectors. 
Using methods from \cite{Barak_GAFA} and \cite{KMW} and a dynamical interpretation of uniform approximation through divergence of trajectories in the space of lattices, Datta and Tamam    \cite{DT} recently proved a weighted version of Theorem \ref{thm: improvement rate} independently of the present paper and not using transference. More precisely, \begin{itemize} \item    \cite[Theorem 1.1]{DT} produces totally irrational vectors on analytic submanifolds that are singular with respect to multiple weights in the spirit of Theorem \ref{thm: intersection with weights};
%  and \ref{cor: weighted vectors manifolds}; 
\item   \cite[Theorem 1.2]{DT} constructs vectors in $\UA^*_{n,1}(f, \vw)\cap Y$ for any weight $\vw$ and any affine subspace $Y$ of $\R^n$ not contained in a rational affine hyperplane, with optimal decay conditions on approximating functions $f$; this in particular proves Theorem \ref{thm: improvement rate} for $Y$ being an affine subspace of $\R^n$. 
\end{itemize} 
It is likely that the above results can  be derived from the main theorem of the present paper by combining Theorem \ref{thm: using transference} with a weighted transference argument as in \cite{many} and \cite{G}.}\end{remark}

\ignore{See Remark 5.2 of \cite{KMW}. 
%It is likely that this result can be extended to 
%analytic manifolds of dimension $s$. We do not pursue this in the
%current paper. 

The proof of Theorem \ref{thm: improvement rate} relies on a new
result concerning higher order uniform approximations. 

 Recall also that an affine subspace $L \subset
\R^d$ is called {\em totally irrational } if it is not contained in a
proper rational affine subspace of $\R^d$. If $L = \{\mathbf{x}\}$ is
a singleton (affine subspace of dimension 0) then being totally
irrational is equivalent to saying that the coordinates $x_i$ of
$\mathbf{x}$, together with 1, are linearly independent over $\Q$.  }
{

	\subsection{Other applications} \label{subsec: other} {In the last section of the paper we briefly survey a few other settings where our main theorem can be applied. This includes: a relationship between the conclusion of our main theorem and the absence of  Kan--Moshchevitin phenomenon as exhibited in \cite{KaMo}; \da\ with restrictions on $\vp$ and $\vq$; inhomogeneous approximation; approximating subspaces of $\R^d$ by rational subspaces.  Proofs will appear in a sequel to this paper.}
% Also we should probably give a little table of contents here, repeating what will be proved where.}

\bigskip

\noindent {\bf Acknowledgements.}
The authors thank Yitwah Cheung, Vasiliy Neckrasov, Nicolas de Saxc\'e and Damien Roy for helpful discussions.  
{Thanks are also due to the anonymous referee for a careful reading of the paper and many insightful comments.}

\section{Proof of the abstract result% {and its simplified version}
}\label{abstr}

\ignore{{We will derive Theorem \ref{thm: special axiomatic theorem} from a slightly more general result.}

\begin{theorem} \label{thm: general axiomatic theorem}
        %, and let $S \subseteq Y$ be a non-empty locally
        %closed set. % with no isolated points. %\comm{Actually I (Dima)
                                %am not sure why we need $\R^d$, maybe
                                %an arbitrary metric space will do.}  
	For any $k\in \N$ suppose we are given a Diophantine system  %\equ{xk}
	{$\mathcal{X}_k = \big( X_k,  \{d_{k,s}\},  \{h_{k,s}\}\big)$, where $s\in\I$,}
	%{\mathcal{X}_k = \big( X_k, \mathcal{D}_k = \{d_{k,s}: s\in\I\}, \mathcal{H}_k = \{h_{k,s}: s\in\I\}\big)}
        %$\mathcal{X}_k = (X_k,\mathcal{R}_k, \mathcal{H}_k, \mathcal{D}_k)$ as above, 
        and
        a non-increasing 
     function $f_k: \R_{>0} \to \R_{>0}$. {Let $Y$ be a locally compact %complete 
	metric space,  let $\varphi_k$
        be a continuous map from $Y$ to $X_k$, %\comm{Any assumptions
                                %on $\varphi_k$?}.  
	and} let $\{L_i : i \in \N\}$ and ${\{R_\ell: \ell \in \N\}}$ be indexed 
        collections of closed subsets of $Y$ satisfying the following:

	\begin{enumerate}[label=(\alph*)] \rm
		\item  \label{item: a} {\it for any $k,i\in\N$ {there exists $s\in\I$ such that  $d_{k,s}|_{\varphi_k(L_i)}\equiv 0$};}
       %           the set $L_i$ is contained in $\varphi_k^{-1}(R)$
               %   for some $R\in \mathcal{R}_k$; 
		\item \label{item: b} {\it $\cup_i L_i$ is dense in $Y$;}
                  	\item \label{item: c} {\it for
                          every $i, \ell \in \N$ and every open  $W
                          \subset                           Y$ such that $L_{i}  \cap W \neq
                          \varnothing$, there is $j
                          \in \N$ so that $L_{i} \cap L_j \cap W  \ne
                          \varnothing$ and $L_j \not\subset R_\ell$;} 
                        \item \label{item: d}
                      {\it for every $i\in \N$ and any ${\ell} \in \N$ with %$i_0 \notin \mathcal{A}_0$ and
                $L_{i} \not
                \subset R_{{\ell}}$,
              %    finite subset s $\mathcal{A}_0$ and  
              % $\mathcal{J}_0\subset\N$
                      %and any  $ y \in  Y$
                % for $j \in \mathcal{J}_0$, 
		we have} \eq{respect}{L_{i} = \overline{
                  L_{i}\smallsetminus%\big(\{y\}  \cup 
                    %\bigcup_{i\in \mathcal{A}_0} L_i
	%	\cup
        % \bigcup_{j\in \mathcal{J}_0}
                  R_{{\ell}}%\big)
                  }.}

	\end{enumerate} Then the set 
	\eq{set}{%Y_{\mathrm{sing}} \df
	%\left\{x\in Y :
        %  \forall\,k\in\N, \ \varphi_k(x)\text{ is $(\mathcal{X}_k,f_k)$-uniform }\right\}
         \bigcap_{k} \varphi_k^{-1}\big(\UA_{\mathcal{X}_k}(f_k)\big)
        \smallsetminus  \bigcup_{{{\ell}}} R_{{\ell}}
	}
	%there exist 
	is %uncountable and 
	{dense} in $Y$.
	% many $x\in S \smallsetminus  \bigcup_{j} R_j$ such that $\varphi_k(x)$ is $(\mathcal{A}_k,H_k,\UA_k,f_k)$-singular $\forall\,k\in\N$.
	 %\comm{No restrictions on $H_k$, $\UA_k$ or $f_k$!}
	\end{theorem}}

%Does it seem to be true to you? If yes, there could be many interesting applications. For example, singularity in the sense of the $\ell$-th minimum, although the precise form of the corresponding generalization of Remark \ref{standard} is not completely clear to us.

%        \begin{remark}
%Note that neither the approximation functions $f_k$, nor the heights $H_k$, appear in the
%conditions of the theorem; thus if the axioms (a)--(d) are satisfied,
%then they are satisfied for all positive $f$ and all choices of the heights. Note also that Theorem \ref{thm: general axiomatic theorem} is
%not useful for simultaneous approximations, where the resonant sets
%$\mathcal{A}$ are singletons and axiom (c) will not be satisfied. As
%in \cite{KMW}, our
%strategy for deducing information about simultaneous approximation is
%to use Theorem \ref{thm: general axiomatic theorem} for linear forms, and then applying a
%transference argument. 
%          \end{remark}

%\begin{convention}\label{rem: multiple labels} \rm  Theorem \ref{thm: special axiomatic theorem} can be easily phrased in this language, %with no effect on its assumptions and conclusion.} \end{convention}

\ignore{{First let us  make sure that Theorem \ref{thm: special axiomatic theorem} follows from Theorem \ref{thm: general axiomatic theorem}.}

\begin{proof}[Proof of Theorem \ref{thm: special axiomatic theorem} assuming Theorem \ref{thm: general axiomatic theorem}] 
{It is clear that  the collection $\{L_i\}$ being aligned with $\mathcal{X}_k$ via $\varphi_k$  for every $k\in\N$ amounts to assumption  \ref{item: a} of Theorem \ref{thm: general axiomatic theorem}, and that  part  \equ{vd1} of the TD property takes care of   \ref{item: b}. 
%,  it remains to verify assumptions (c) and (d) for an arbitrary collection $\{R_j\}$  of proper 
 %     real analytic submanifolds of $Y$. 
      Now suppose that  \ref{item: c} does not hold. Then there exists
                           an open  $W
                          \subset                           Y$ and $i,\ell\in \N$   such that $L_i  \cap W \neq
                          \varnothing$ and  % $\ell \in \N$ such that  
                          any $L_j\in\mathcal{L}$
                           with $L_i \cap L_j \cap W  \ne
                          \varnothing$ is contained in $R_\ell$, which, since $R_\ell$ is closed and has empty interior,   contradicts to \equ{vd2}. Finally,  to prove \ref{item: d}, take  $i,\ell \in \N$ with %$i_0 \notin \mathcal{A}_0$ and
                $L_{i} \not
                \subset R_{{\ell}}$; since $\{L_i\}$  respects $R_{{\ell}}$, it follows that $L_i \cap R_{{\ell}}$ has  empty interior in $L_i$, which is equivalent to \equ{respect}. Hence Theorem \ref{thm: general axiomatic theorem} applies, and part (a) of Theorem \ref{thm: special axiomatic theorem} follows. This finishes the proof, since part (b) has been derived from (a) in Remark~\ref{doesnotmatter}(4).}
                  \end{proof}}
%Also it might happen  that the
%underlying set $X_k$ of $\mathcal{X}_k = (X_k, \mathcal{D}_k,
%\mathcal{H}_k)$ arises several times, with distinct generalized
%distance functions or height functions. 
%\comm{I have not changed the proof yet, although changes will be cosmetic. My changes so far stop here. Still not sure if we should work with sequences or the way it has been set up before. Maybe you can meet and figure it out.}

\begin{proof}[Proof of Theorem \ref{thm: special axiomatic theorem}]
{Recall that the statement of the theorem %\ref{thm: special axiomatic theorem} 
involves two countable collections $\mathcal{L}$ and $\mathcal{R}$ of subsets of $Y$. In the course of the proof we will \textit{index} those collections by $\N$, writing  %working with {\em indexed collections} 
$\mathcal{L} = \{L_i: i\in\N\}$ and $\mathcal{R} = \{R_\ell: \ell\in\N\}$. That is, essentially we will work with functions $i \mapsto L_i$ and $\ell \mapsto R_\ell$,
and will not 
assume  these functions to be injective.} 
 \ignore{We are given two countable collections $$\mathcal{L} = \{L_i: i\in\N\}\text{ and }\mathcal{R} = \{R_\ell: \ell\in\N\}$$ of closed   subsets of $Y$ satisfying the assumptions of the theorem.}

 %Given any 
{Take a non-empty open   $W\subset Y$, %choose
	%Fix 
	%an open 
which, since  $Y$ is %a
        locally compact, we can assume to be relatively compact.} %neighborhood 
	%subset $O$ of {$W$}; %of a point $z_0\in S$ 
	% Note that such a
       % neighborhood exists around every point of $Y$ 
 %      {this is possible since} %because 
        % metric space.  
       %  By contradiction, assume that
     %   the intersection of the set %$Y_{\mathrm{sing}} $ as in 
     %   \equ{specialset} with $O$ %of such $x$  
	%is {empty}. %at most countable, and denote its elements by $\{y_\ell:
       % \ell\in\N\}$. %(if %$Y_{\mathrm{sing}} $ 
       % {this intersection} is empty or finite, we simply add
       % points to it and thus may assume it is countable). 
       We will inductively construct a nested sequence of
        open sets $U_\ell \subseteq U_0 \df  {W}$, an increasing sequence %of times  
	$T_\ell \to \infty$, and a sequence of indices $i_\ell$ so that for all $\ell \in \N$:
	\begin{enumerate}[label=(\roman*)]
		\item %each $U_i$ is open and bounded in $Y$ with
		$\varnothing\neq \ov{ U_{\ell}}\subseteq U_{\ell-1}$,
		\item $ L_{i_\ell}  \cap  U_\ell \ne \varnothing$ %, $U_i \cap R_i = \varnothing$, 
		and ${U_\ell \cap %\Big(\{y_1,\dots,y_i\} \cup 
	%\bigcup\limits_{j\le i}
{R_\ell} %(L_{\ell_j}
			%)
		%\Big)
		=\varnothing}$, %$U_i \cap (R_i\cup R_{i+1}) = \varnothing$ %and $L_{\ell_i} \cap U_i \cap R_{i+1} = \varnothing,$ \tcr{any issue with last part?}
		\item for all $1\le k \le \ell$ there exists 
		%$A =
                  %A_{k,i}\in \mathcal{A}_k$ such that
                 % $\varphi_{k}(L_{\ell_i})\subset A$ and $H_k(A) \le
                 % T_i$,
                   {$s = s(k,\ell)\in \I$
                  such that $d_{k,s(k,\ell)}\equiv 0$ on 
                 $\varphi_{k}(L_{i_\ell})$
                  %\subset A$ 
                  and $h_{k,s(k,\ell)} \le
                  T_\ell$,}
                  
		%all $\x \in L_{\ell_i}$, and all $t \ge
                %T_i$, $$\lambda_1\left(a_t^{(k)}
                %\Lambda_\x^{(k)}\right) \le
                %\max\{C(k,L_{\ell_i}):1\le k \le
                %i+1\}e^{-t}<\frac{1}{2}\phi(t),$$
		\item[(iv)] for all %$t \in [T_{i-1},T_i]$, all 
		{$1 \le k \le \ell-1$} and all $x \in U_{\ell},$
		$d_{k,s(k,\ell-1)}\big(\varphi_k(x)\big) <
                f_k(T_{\ell})$, where $s(k,\ell-1)$ is as
                  in (iii) (for $\ell-1$). 
                %$d_{A_{k,i-1}}\big(\varphi_k(x)\big) <
                %f_k(T_{i})$, where $A_{k,i-1}$ is as
               %   in (iii) (for $i-1$). 
		%$$\lambda_1\left(a_t^{(k)}\Lambda_\x^{(k)}\right)<\phi(t).$$
	\end{enumerate}

	%\medskip
	%\comm{See my changes in blue above.}
	
	\medskip

        \noindent\textbf{Step 1: Sufficiency.} 
	Let us first check that this is sufficient {for Part (a) of the theorem}. 
	First observe that (i) and the relative compactness of
          ${W}$ imply that $ \bigcap\limits_{\ell \in \N} \ov{ U_\ell} $
          is non-empty.  
	We claim that $$x \in \bigcap\limits_{\ell \in \N} \ov{ U_\ell} \implies 
	%x 
 \forall\,k\in\N, \	\text{ $\varphi_k(x)$ is $(\mathcal{X}_k,f_k)$-{uniform}. }
	$$ 
	Indeed, for any $k$ take $T\ge T_{k}$, and let $\ell$ be such
        that $T_\ell\le T < T_{\ell+1}$; then clearly $\ell \ge k$. Take 
        %$A =
        %A_{k,i}$
        {$s = s(k,\ell)$}. Then %$H_k(A) \le 
        $h_{k,s} \le
                   T_\ell\le T $ by (iii), and by (iv)
        and since $f_k$ is non-increasing and $x \in U_{\ell+1}$, we have that
        $$
        {d_{k,s}\big(\varphi_k(x)\big) 
        %d_{A_{k,i}}\big(\varphi_k(x)\big) 
        < f_k({T_{\ell+1}})\le
        f_k(T).}$$ Therefore
        $\varphi_k(x)$ is $(\mathcal{X}_k,f_k)$-{uniform}.  
	Also from (ii) it follows that $x\notin \bigcup_{\ell} R_\ell$.
	% and
       % that $x\ne y_\ell$ for any $\ell\in\N$. 
       Thus $x$ {belongs to  the set  
        \equ{specialset},}  {which implies its density and finishes the proof of (a)}.
        %\in
        %Y_{\mathrm{sing}} $, 
        %but this is a contradiction to the 
       % definition of the set $\{y_\ell\}$. 
      % {which was assumed to be empty}.
	% to the uncountability 
	
%	First, note by (i) that $\varnothing\ne \bigcap\limits_{i \in \N} \ov{S\cap U_i}\subseteq O.$ Clearly, by the disjointness in (ii), for every $k$, $\x$ is not algebraic of degree $k$, because $\x \not\in R_j$ for any $j$, and these exhaust the entire set of such vectors in $S$, by assumption \ref{Rs union to algebraic}.
	
%	That $\x$ is $k$-singular for every $k$ is clear from condition (iv) and the fact that $\lim_{t\to\infty}\phi(t)=0.$ \\

	\medskip

%	\vfil\eject
	\noindent\textbf{Step 2: Base case of induction.}
	By %assumptions  \ref{item: b}
        %there is an index $\ell_0$ such
        %that $L_{\ell_0} \cap S \cap O \neq \varnothing.$ 
	%By assumptions
        %and \ref{item: c}, 
        {the total density of $\mathcal{L}$ relative to $\mathcal{R}$} there exists
        $i_1 $ 
        so that $$L_{i_1} \cap {W} \ne \varnothing \
        \ \text{ and } \ \ L_{i_1}\not\subset R_1.$$	
%Using assumption \ref{item: a}, 
{Since $\mathcal{L}$ is aligned with $\mathcal{X}_1$ via $\varphi_1$}, there %is $A = A_{1,1} \in \mathcal{A}_1$ 
{exists $s = s(1,1)$}
such that
$\varphi_1(L_{i_1}) $
%\subset
{is contained in} ${d_{1,s}^{-1}(\{0\})}$. Choose $T_1$ so that $T_1 > %H_1(A)
{h_{1,s}}$, and let $z \in
L_{i_1} \cap {W}$.
%Since $x \mapsto d_A \circ \varphi_1(x)$ is
%continuous, and $d_A \circ \varphi_1(z)=0$, there is a neighborhood $\hat{U}_1$ of $z$
%such that $d_A(\varphi_1(y)) < f_1(T_1)$ for all $y \in
%\hat{U}_1$.
Choose a neighborhood $\hat{U}_1$  of $z$ so that $\overline{\hat{U}_1} \subset
{W}$. %Using %In view of \ref{item: d}, 
{Since $\mathcal{L}$ respects $R_1$ and   %\{y_1
%\} \cup 
%R_1$} 
the latter} is closed, there is an open set $U_1 \subset 
\hat{U}_1$ such that $L_{i_1}  \cap U_1 \neq \varnothing$ and
$U_1 \cap %\{y_1\} \cup 
{R_1} = \varnothing.$ This choice ensures that 
(ii) holds; (iii) holds by the choice of %$A_{1,1}$ 
${s}$ and $T_1$, and (i)
%and (iv)
holds since $U_1 \subset \hat{U}_1$ and $U_0 ={W}$.  Item (iv) is
vacuous for $\ell=1$.        This completes the base case. 

\medskip
	 
	\noindent\textbf{Step 3: Inductive step.}
	Assume {that} we have $U_\ell, T_\ell, i_\ell$ satisfying the inductive
        hypotheses. {In view of  %assumption \ref{item: c} 
        \equ{vd2} and (ii)}, there exists
        $i_{\ell+1}$ so that \[L_{i_{\ell+1}}\cap L_{i_\ell}
          \cap U_\ell \ne\varnothing \text{ and }L_{i_{\ell+1}}
          \not\subset R_{\ell+1}.\]
        %Using
       % {In view of} \ref{item: a}, 
       {Since for any $k\in\N$ the collection $\mathcal{L}$ is aligned with $\mathcal{X}_k$ via $\varphi_k$}, for each $k = 1, \ldots, \ell+1$ there is 
        %$A_{k,
         % i+1} \in \mathcal{A}_k$ 
         {$s(k,\ell+1)\in \I$}
         such that $\varphi_k(L_{i_{\ell+1}})
        \subset %A_{k, i+1}$
        {d_{k,s(k,\ell+1)}^{-1}(\{0\})}$. Let
        $$T_{\ell+1} > \max \left(T_\ell \cup \{%H_k(A_{k, i+1}
        {h_{k,s(k,\ell+1)}} : k=1,
          \ldots, \ell+1\}  
          \right),$$
and let $z \in
L_{i_{\ell+1}} \cap L_{i_\ell} \cap U_\ell $. Since for
$k = 1, \ldots, \ell$ the functions $%x \mapsto 
%d_{A_{k, i}}
{d_{k,s(k,\ell)}} \circ \varphi_k%(x) 
$ are continuous and vanish
at $z \in L_{i_\ell}$, there is a neighborhood $\hat{U}_{\ell+1}$ of $z$
such that $\overline{\hat{U}_{\ell+1} }\subset
U_\ell$ and $$%d_{A_{k, i}}
{d_{k,s(k,\ell)}}\big(\varphi_k(y)\big) < f_k(T_{\ell+1})\text{  for all }y \in
\hat{U}_{\ell+1}.$$ %From \ref{item: d} 
{Again, since  $R_{\ell+1}$ is closed and is respected by $\mathcal{L}$}, 
it follows that there is an open set $U_{\ell+1} \subset 
\hat{U}_{\ell+1}$ such that $$L_{i_{\ell+1}} \cap U_{\ell+1} \neq \varnothing\text{ and
}U_{\ell+1} \cap   R_{\ell+1} = \varnothing.$$
This choice, and the inductive hypothesis, ensure that 
(ii) holds for $\ell+1$; (iii) holds for $\ell+1$ by the choice of %$A_{k, i+1}$ 
{$s(k,\ell+1)$} and $T_{\ell+1}$, and (i)
and (iv)
hold for $\ell+1$ since $U_{\ell+1} \subset \hat{U}_{\ell+1}$.   {This finishes the proof of (a).}  \\

\noindent\textbf{Step 4: Part (b).} {%If $\mathcal{L}$ consists of connected sets, the conclusion of the theorem can be upgraded from `dense' to `dense and uncountable'. 
The second part of Theorem \ref{thm: special axiomatic theorem} easily follows from (a). Indeed, arguing by contradiction, assume that the set \equ{specialset} consists of countably many points $\mathcal{R}_0\df \{y_j: j\in\N\}$. Then one can replace $\mathcal{R}$ with $\mathcal{R}\cup \mathcal{R}_0$ which will still satisfy \equ{vd2} and  be respected by $\mathcal{L}$ (here we use the fact that   every $L\in\mathcal{L}$ is either connected or has no isolated points).  Thus the set   \equ{specialset} with this new choice of $\mathcal{R}$ will be empty, contradicting Theorem~\ref{thm: special axiomatic theorem}(a).}
\end{proof}

%\comm{The main question: do we really need Theorem \ref{thm: general axiomatic theorem}? or maybe Theorem \ref{thm: special axiomatic theorem} suffices for all our applications? }

\ignore{\section{Real analytic manifolds are respectful% {and its simplified version}
}\label{td}

\ignore{For  many applications of our main theorem it will be  convenient and sufficient to have a simplified version. Let us introduce the following 
\begin{definition}\label{verydense} \rm Say that a  collection  $\{L_i\}$ of proper closed subsets of a %locally compact 
metric space $Y$ is 
{\sl   totally
%very 
dense} (abbreviated by TD) if 	\eq{vd1}{\bigcup_{i} L_i\text{ is dense in }Y}
 and 	 
 \eq{vd2}{\begin{aligned} \forall\text{ open  $W
 \subset                           Y$ and   $\forall\,i$ such that }L_i  \cap W \neq
                          \varnothing%\ \exists\, \text{open }V\supset \,&L_i  \cap W
                          \qquad \\  
                          \text{the closure of } \bigcup_{j: \ L_j\cap L_i \cap W  \ne
                          \varnothing}L_j%\text{ \ \ contains  } &V
                          \text{ has a non-empty interior}.
                          %\qquad\\\ \text{ 
% in a proper affine subspace of $Y$ (resp.,  
%is dense in }Y)&.
\end{aligned}}                          
\end{definition}
%Example: the set of lines parallel to one of the coordinate axes in $\R^3$ is dense but not linearly dense. %\comm{Note that in Definition \ref{verydense}  one can replace subvarieties by affine subspaces, getting a weaker condition which will be sufficient in many situations. E.g. the lines in the above example will satisfy the weaker condition.} 
%We will abbreviate these properties as LD and TD.
For example, for any $1\le k < n$ the collection of $k$-dimensional rational affine subspaces of $Y = \R^n$ is TD, with the union in  \equ{vd2} being dense in $\R^n$. More examples will be described in \S\ref{dioph}. On the other hand, the collection of straight lines in $\R^n$, $n \ge 3$, that are parallel to one of the coordinate axes satisfies property \equ{vd1} but not \equ{vd2}.}

\smallskip
The next theorem  {is applicable to} a special case of Theorem \ref{thm: general axiomatic theorem} when $Y$ is a real analytic submanifold of a finite-dimensional real vector space, and both $\{L_i\}$ and $\{R_\ell\}$ are %proper real 
so-called {\sl semianalytic subsets} of $Y$. First let us have an exposition of some background related to real analytic geometry. 

Let $k \leq n$, and let $U\subset \R^k$ be open. We say
that ${f}: U\to \R^n$
is a {\sl real analytic immersion} if it is injective, each of its
coordinate functions ${f}_i :
U\to \R \ (i=1, \ldots, n)$ is infinitely differentiable, 
the Taylor series of each ${f}_i$ converges in a neighborhood of
every $\x \in U$,  
and the derivative mapping $d_{\x} {f}: \R^k \to \R^n$ has
  rank $k$.
By a  {\sl  $k$-dimensional real analytic submanifold {of}
  $\R^n$} we mean a subset $Y \subset \R^n$ such that for
every $\x \in Y$ there is a neighborhood $V
\subset \R^n$ containing $\x$, an open set $U \subset \R^k$, 
and a real analytic immersion ${f}: U \to \R^n$ such that
${V} \cap {Y} = {f}({U}).$ %By a real analytic
%{\sl curve} (resp., {\sl surface}) we mean a connected one-dimensional (resp., two-dimensional) real analytic
%submanifold. 
% A mapping ${h} : {Y} \to \R^m$ is {\sl real analytic} if for any ${\x}, {f}, {U}$ as above, each coordinate function of ${h} \circ {f}: {U} \to \R^m$ is infinitely differentiable and its Taylor series converges in some neighborhood of ${f}^{-1}({\x})$.

The crucial property, which distinguishes real analytic submanifolds from
smooth manifolds and follows easily from definitions \comm{(still it would be nice to have a reference!)}, is the
following

\begin{lemma}\label{lem: crucial} 
 Let $L, R$ be real
analytic submanifolds of $\R^n$ (where we equip them with the topology inherited from the
ambient space $\R^n$). Suppose that the intersection $L \cap
R$ has nonempty interior in $L$, then
this intersection is open in $L$; and thus, if additionally
$L$ is connected and $R$ is closed, then
$L \subset R$.   \end{lemma}

 A subset 
${M} \subset {Y}$  is called {\sl semianalytic} if it
is locally described by 
finitely many equalities and inequalities involving real analytic
functions, i.e.\ for every ${\x}_0 \in {M}$ there is an open
neighborhood ${U}$ containing ${\x}_0$ such that 
$$
{M} \cap {U} = \left\{{{\x}} \in {Y} \cap
  {U} : \forall i, \, 
{h}_i({{\x}})=0 \text{ and } \forall j, \, \bar{{h}}_j({{\x}}) >0
\right\},
$$
for finitely many real analytic functions ${h}_i, \bar{{h}}_j$ on
${Y} \cap {U}$. For background on the geometry of analytic and
semianalytic manifolds we refer the reader to \cite{BM} %, CA}
and the references therein. In particular the reader may consult
\cite{BM} for the definition of the {\sl dimension} of a semianalytic set. %\comm{\it DК: Maybe better to write down this definition here, since it is used later.}
%\comm{These are  not optimal
%  references. We may want to consult with some expert. I think
%  \cite{BM} is a very widely used reference but it does not deal with
%  the distinction between global semi-analytic and
%  non-global. \cite{CA} is a more specialized paper about low dimensional global
%  semi-analytic sets, which is directly related to what we need. In my
%proof I use the notion of ``global'' for safety but I am actually not
%completely convinced it is important. } In particular see
%\cite[\S 2]{BM} for a definition of the {\sl dimension} of ${N}$;
%this dimension satisfies $\dim {N} \leq \dim {M}$. 

We will need to decompose
semianalytic subsets into analytic submanifolds. In this regard we
have the following (see \cite[\S 2]{BM}):

\begin{proposition}\label{prop: stratification}
Let ${M} \subset {Y}$ be a semianalytic subset
of a real analytic submanifold ${Y} \subset \R^n$.  
%  \begin{enumerate}
%    \item
%The class of subanalytic sets is closed under finite unions and
%intersections.
%\item
%  If $S_1, S_2$ are subanalytic then
%  $$\dim S_1 \cap S_2 \leq \min
%  \left( \dim S_1, \dim S_2 \right) \ \text{ and } \ \dim S_1 \cup S_2 = \max
%  \left( \dim S_1, \dim S_2 \right).$$
%\item
%  A connected component of a subanalytic set is
%  subanalytic. 
%\item
Then any connected component of ${M}$ is semianalytic, and
${M}$ has a locally finite presentation as  
a disjoint union of sets ${N}_1,
{N}_2, \ldots$, each of which is a connected
analytic submanifold of 
dimension at most $\dim {M}$, and such that
\eq{eq: almost disjoint}{
i \neq  j, \ {N}_i \cap \overline{{N}_j} \neq \varnothing \
\implies \ \dim {N}_j > \dim {N}_i.
}
%\item
%  If
%${M} $ is a real analytic submanifold and $A$ is a rational affine
%hyperplane such that ${M} \not \subset A$, then ${M}
%\cap A$ is a subanalytic set of dimension at most $k-1$.
%\end{enumerate}
  \end{proposition}

{We will refer to the sets ${N}_1,
{N}_2, \ldots$ as to the {\sl components} of ${M}$.} \comm{Not sure if it is a good idea, and if those components are uniquely defined. Any thoughts? also we don't see to need \equ{eq: almost disjoint}.}

\begin{theorem} \label{specialcase} 	Let $Y$ be a  real analytic submanifold of $\R^d$, $d\ge 2$.        %, and let $S \subseteq Y$ be a non-empty locally
        %closed set. % with no isolated points. %\comm{Actually I (Dima)
                                %am not sure why we need $\R^d$, maybe
                                %an arbitrary metric space will do.}  
	For any $k\in \N$ suppose we are given a Diophantine system  \equ{xk},
	%{\mathcal{X}_k = \big( X_k, \mathcal{D}_k = \{d_{k,s}: s\in\I\}, \mathcal{H}_k = \{h_{k,s}: s\in\I\}\big)}
        %$\mathcal{X}_k = (X_k,\mathcal{R}_k, \mathcal{H}_k, \mathcal{D}_k)$ as above, 
      %  and
     %   a non-increasing 
     %function $f_k: \R_{>0} \to \R_{>0}$, 
     and let $\varphi_k$
        be a continuous map from $Y$ to $X_k$. %\comm{Any assumptions
                                %on $\varphi_k$?}.  
	{Also let $\{M_i : i \in \N\}$ be a countable collection of proper semianalytic subsets of $Y$, and let $\{L_i : i \in \N\}$ %and $\{R_j : j \in \N\}$ \
	be an indexed collection of their components in the sense of Propositon \ref{prop: stratification}. Assume that 
	\begin{itemize}
	\item[$\bullet$] $\dim L_i \ge 1$ for all $ i$;
	\item[$\bullet$] 
	$\{L_i \}$ is totally dense; 
	\item[$\bullet$]  condition {\rm (a)} of Theorem \ref{thm: general axiomatic theorem} holds.
	\end{itemize}
        %assumption {\rm (a)} of Theorem \ref{thm: general axiomatic theorem} is satisfied. 
%    \eq{a}{    \forall \,k\in\N\ \  \forall \,L\in\mathcal{L} \ \ \exists\,s\in\I\text{ with  }d_{k,s}|_{\varphi_k(L)}\equiv 0.}
    Then the set \equ{set}
   % \eq{set}{%Y_{\mathrm{sing}} \df
	%\left\{x\in Y :
        %  \forall\,k\in\N, \ \varphi_k(x)\text{ is $(\mathcal{X}_k,f_k)$-uniform }\right\}
     %\begin{equation*}  
  %   (1.7)\qquad \qquad \qquad \qquad  \bigcap_{k} \varphi_k^{-1}\big(\UA_{\mathcal{X}_k}(f_k)\big)
   %     \smallsetminus  \bigcup_{j} R_j \qquad \qquad \qquad \qquad \qquad  
%	\end{equation*}
	%there exist 
	is dense in $Y$ for any choices of non-increasing 
     functions \linebreak $f_k: \R_{>0} \to \R_{>0}$ and any countable collection $\{R_\ell\}$ of proper closed
         {semianalytic subsets} of $Y$}. \comm{Homework for DK}
	% many $x\in S \smallsetminus  \bigcup_{j} R_j$ such that $\varphi_k(x)$ is $(\mathcal{A}_k,H_k,\UA_k,f_k)$-singular $\forall\,k\in\N$.
	 %\comm{No restrictions on $H_k$, $\UA_k$ or $f_k$!}
	\end{theorem}
        %$\mathcal{X}_k = (X_k,\mathcal{R}_k, \mathcal{H}_k, \mathcal{D}_k)$ as above, 
\begin{remark}\label{doesnotmatteragain} \rm
Note that in this case the uncountability of the set \equ{set} follows automatically, because one can add any countable subset of $Y$ to the collection $\{R_\ell\}$ in the above lemma. Also it is clear, as was already mentioned in Remark \ref{doesnotmatter}, that once the assumptions of the above theorem are verified, its conclusion holds for arbitrary height functions  and arbitrary generalized
  distance functions with the same zero loci.
\end{remark}
\begin{proof} Since assumption  (a) of Theorem \ref{thm: general axiomatic theorem} is postulated, and assumption (b)  follows  from  \equ{vd1},  it remains to verify assumptions (c) and (d) for an arbitrary collection $\{R_j\}$  of proper 
      real analytic submanifolds of $Y$. 
      
      Suppose that  (c) does not hold. Then there exists
                           an open  $W
                          \subset                           Y$ and $i,\ell\in \N$   such that $L_i  \cap W \neq
                          \varnothing$ and  % $\ell \in \N$ such that  
                          any $L_j\in\mathcal{L}$
                           with $L_i \cap L_j \cap W  \ne
                          \varnothing$ is contained in $R_\ell$, which, since $R_\ell$ is proper, clearly contradicts to \equ{vd2}.

                          {Finally  let us prove (d). Take  $i,\ell \in \N$ with %$i_0 \notin \mathcal{A}_0$ and
                $L_{i} \not
                \subset R_{{\ell}}$,
              %    finite subset s $\mathcal{A}_0$ and  
              % $\mathcal{J}_0\subset\N$
                      and   $ y \in  Y$
                % for $j \in \mathcal{J}_0$, 
		such that  $
                  L_{i}\smallsetminus\big(\{y\}  \cup   %\bigcup_{i\in \mathcal{A}_0} L_i
	%	\cup
        % \bigcup_{j\in \mathcal{J}_0}
                  R_{{\ell}}\big)$ is not dense in $L_i$; equivalently, $
                  L_{i}\cap\big(\{y\}  \cup   %\bigcup_{i\in \mathcal{A}_0} L_i
	%	\cup
        % \bigcup_{j\in \mathcal{J}_0}
                  R_{{\ell}}\big)$ has a non-empty interior in $L_i$.  Since $\dim L_i \ge 1$, it implies that 
                  $
                  L_{i}\cap  R_{{\ell}}$ has a non-empty interior in $L_i$. Now write $R_\ell = \cup_jN_j$ as in Proposition \ref{prop: stratification}. Then it follows that $
                  L_{i}\cap  N_{{j}}$ has a non-empty interior in $L_i$ for some $j$. Since $L_i$ is connected, it follows from 
                  Lemma \ref{lem: crucial} that $L \subset \overline{N_j}$, which, because $R_\ell$ is closed, contradicts to the assumption $L_{i} \not
                \subset R_{{\ell}}$.} This establishes (d); hence Theorem \ref{thm: general axiomatic theorem} applies, and its conclusion follows.
\end{proof}}

	%\comm{PLEASE CHECK THAT THE STATEMENT AND THE PROOF ARE CORRECT! A comment: my goal here was to come up with one general statement applicable to various situations considered in our paper and with easily checkable conditions.  Ideally it would be great to have a statement that also applies to our favorite fractals. But I don't know how to do it, perhaps fractals can be treated separately after all.  By the way: in the bibliography we have references \cite{alggeomref,alggeomref2} but we don't seem to use them in the paper. Maybe they contain some useful information? I added the one we were referring to in \cite{KMW}.}}

\section{Simultaneous approximation}\label{dioph}

%          Before turning to a proof of the main theorem, let us make sure that it is applicable to the %classical case \equ{classical}
%          {set-up of singular/$f$-uniform systems of linear forms.
          % \comm{Here we should have a brief proof of the Khintchine-Jarn\'ik Theorem in the case $n>1$.}
%        For that 

\ignore{For our argument in this section   it will be convenient to have a
separate lemma catering to a special case of Theorem \ref{thm: general
  axiomatic theorem}, where $Y$ is a vector space and $L_i$ are its
affine subspaces. Let us introduce the following  

\begin{definition}\label{verydense} \rm Say that a  collection
  $\mathcal{L}$ of proper affine subspaces of a finite-dimensional
  vector space $Y$ is  
{\em  totally
%very 
dense} (abbreviated by TD) if 	\eq{vd1}{\bigcup_{L\in\mathcal{L}} L\text{ is dense in }Y}
 and 	  
 \eq{vd2}{\begin{aligned} \forall\text{ open  $W
 \subset                           Y$ and any $L\in\mathcal{L}$ such that }L  \cap W \neq
                          \varnothing,\quad\\  
                          \text{the closure of } \bigcup_{L'\in\mathcal{L}, \ L' \cap L \cap W  \ne
                          \varnothing}L'\text{ \ \ has a non-empty interior.}
                          %\qquad\\\ \text{ 
% in a proper affine subspace of $Y$ (resp.,  
%is dense in }Y)&.
\end{aligned}}                          
\end{definition}

%Example: the set of lines parallel to one of the coordinate axes in $\R^3$ is dense but not linearly dense. %\comm{Note that in Definition \ref{verydense}  one can replace subvarieties by affine subspaces, getting a weaker condition which will be sufficient in many situations. E.g. the lines in the above example will satisfy the weaker condition.} 
%We will abbreviate these properties as LD and TD.

For example, for any $1\le k < n$ the collection of $k$-dimensional
rational affine subspaces of $\R^n$ has this property, with the union
in  \equ{vd2} being dense in $\R^d$. 

\begin{lemma} \label{specialcase}
%	Let $Y$ be a finite-dimensional space.
        %, and let $S \subseteq Y$ be a non-empty locally
        %closed set. % with no isolated points. %\comm{Actually I (Dima)
                                %am not sure why we need $\R^d$, maybe
                                %an arbitrary metric space will do.}  
%	Suppose we are given a Diophantine system  $$\mathcal{Y} = \big( Y, \mathcal{D} = \{d_{s}: s\in\I\}, \mathcal{H} = \{h_{s}: s\in\I\}\big),$$
        %$\mathcal{X}_k = (X_k,\mathcal{R}_k, \mathcal{H}_k, \mathcal{D}_k)$ as above, 
        %, and let $S \subseteq Y$ be a non-empty locally
        %closed set. % with no isolated points. %\comm{Actually I (Dima)
                                %am not sure why we need $\R^d$, maybe
                                %an arbitrary metric space will do.}  
	For any $k\in \N$ suppose we are given a Diophantine system  $\mathcal{X}_k$ as in  \equ{xk}, and let $\varphi_k$
        be a continuous map from   a  finite-dimensional vector space $Y$  to $X_k$.
                                %on $\varphi_k$?}.  
	Let $\mathcal{L}$  be a countable  TD
	%very  
	%dense
        collection of proper affine subspaces  of $Y$ such that 
        %assumption {\rm (a)} of Theorem \ref{thm: general axiomatic theorem} is satisfied. 
    \eq{a}{    \forall \,k\in\N\ \  \forall \,L\in\mathcal{L} \ \ \exists\,s\in\I\text{ with  }d_{k,s}|_{\varphi_k(L)}\equiv 0.}
    Then the set %\equ{set}
   % \eq{set}{%Y_{\mathrm{sing}} \df
	%\left\{x\in Y :
        %  \forall\,k\in\N, \ \varphi_k(x)\text{ is $(\mathcal{X}_k,f_k)$-uniform }\right\}
     \begin{equation*}  
     (1.7)\qquad \qquad \qquad \qquad  \bigcap_{k} \varphi_k^{-1}\big(\UA_{\mathcal{X}_k}(f_k)\big)
        \smallsetminus  \bigcup_{j} R_j \qquad \qquad \qquad \qquad \qquad  
	\end{equation*}
	%there exist 
	is dense in $Y$ for any choices of non-increasing 
     function $f_k: \R_{>0} \to \R_{>0}$ and any countable set $\{R_j\}$ of proper 
        real analytic submanifolds of $Y$.
	% many $x\in S \smallsetminus  \bigcup_{j} R_j$ such that $\varphi_k(x)$ is $(\mathcal{A}_k,H_k,\UA_k,f_k)$-singular $\forall\,k\in\N$.
	 %\comm{No restrictions on $H_k$, $\UA_k$ or $f_k$!}
	\end{lemma}

        %$\mathcal{X}_k = (X_k,\mathcal{R}_k, \mathcal{H}_k, \mathcal{D}_k)$ as above, 

We remark that in this case the uncountability of the set \equ{set} follows automatically, because one can add any countable set to the collection $\{R_j\}$ in the above lemma.

\begin{proof} Since assumptions  (a) and (b) of Theorem \ref{thm: general axiomatic theorem} follow  from \equ{a} and \equ{vd1} respectively, it remains to verify assumptions (c) and (d). Let us suppose that $\{R_j\}$ is a collection of proper 
      real analytic submanifolds of $Y$ such that (c) does not hold. Then there exists
                           an open  $W
                          \subset                           Y$, a subspace $L\in\mathcal{L}$ such that $L  \cap W \neq
                          \varnothing$, and   $\ell \in \N$ such that  any $L'\in\mathcal{L}$
                           with $L \cap L' \cap W  \ne
                          \varnothing$ is contained in $R_\ell$, which clearly contradicts to \equ{vd2}. Finally, let us observe that $L \not
                \subset R_j$ implies that $L \cap
                R_j$ is a proper    analytic submanifold of $L$, and hence $L\smallsetminus\big(\{y\}  \cup   %\bigcup_{i\in \mathcal{A}_0} L_i
	%	\cup
        % \bigcup_{j\in \mathcal{J}_0}
                  R_j\big)$ is dense in $L$ for any $y\in L$. This establishes (d); hence Theorem \ref{thm: general axiomatic theorem} applies, and its conclusion follows.
\end{proof}}

%For the first application of the above lemma 
{In this section} we %take $Y = \mr$ %and $\varphi_k = \Id$ for each $k$, 
%and start by 
{consider} the {standard} Diophantine  system $\mathcal{X}_{m,n}$ as in  \equ{classical}.
% and apply Theorem \ref{thm: general axiomatic theorem}. 
{We are going to apply Theorem~\ref{thm: special axiomatic theorem} with $\varphi_k = \Id$ for each $k$ and with the collection $\mathcal{L}$ %of proper subsets  of $\mr$ will be 
  parametrized by the direct product of $\Z^m$ and $\Z^n\nz$.} Namely we will {consider the collection \eq{ell}{\begin{aligned} \mathcal{L} \df \{L_{\vp,\vq} : \vq \in\Z^n\nz, \ \vp\in\Z^m\},\\
 \text{  
% for $\vp\in\Z^m$ and $\vq \in\Z^n\nz$ we will let  $L_{\vp,\vq}$ be the affine subspace of $\mr$ given by
where }L_{\vp,\vq} \df \{A\in\mr: A\vq = \vp&\}.
\end{aligned}}
%It is clear that this collection is very dense whenever $n > 1$ \comm{(does it require an explanation?)}
Let us prove the following}

       \begin{proposition} \label{verydensematrices}
	Let $m,n\in\N$ with $n > 1$, {and let $Y$ be a non-empty open subset of $\mr$. Then the collection \eq{intwithY}{\{L_{\vp,\vq} \cap Y: \vq \in\Z^n\nz, \ \vp\in\Z^m\}}	is 
	%LD if $n > m$, and is $TD$ if $1 < n \le m$.
	totally dense.}
	% in $Y$.} 
	 \end{proposition}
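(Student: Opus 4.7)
The plan is to verify directly the two defining properties of total density, namely density \equ{vd1} and the filling-in property \equ{vd3}, for the collection \equ{intwithY}.

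Property \equ{vd1} is easy. Given $A\in Y$ and $\vre>0$, choose a large integer $Q>1/(2\vre)$, let $\vq\df Q\mathbf{e}_1\in\Z^n\nz$, and round each entry of the first column of $A$ to the nearest integer multiple of $1/Q$. This produces $A'\in\mr$ with $\|A'-A\|\le 1/(2Q)<\vre$ and $A'\vq=\vp\in\Z^m$, i.e.\ $A'\in L_{\vp,\vq}$. For $\vre$ small enough, $A'\in Y$ as well, so $\bigcup_{\vp,\vq}L_{\vp,\vq}\cap Y$ is dense in $Y$.

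The main work is \equ{vd3}. Fix a non-empty open $W\subset Y$ and some $L_{\vp,\vq}$ with $L_{\vp,\vq}\cap W\ne\varnothing$, and pick $A_0\in L_{\vp,\vq}\cap W$. Since $n\ge 2$, choose $\vq_0\in\Z^n$ linearly independent from $\vq$, and fix a vector $\vr\in\R^n$ with $\vr\cdot\vq=0$ and $\vr\cdot\vq_0=1$. For $k\in\N$ put $\vq_k\df k\vq_0$. The key algebraic observation is that, for any $\mathbf{w}\in\R^m$, the matrix $B\df k^{-1}\mathbf{w}\vr^T\in\mr$ satisfies $B\vq=\mathbf 0$, $B\vq_k=\mathbf{w}$, and $\|B\|\le C_0\|\mathbf{w}\|/k$, where $C_0\df\|\vr\|$ depends only on $\vq$ and $\vq_0$.

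Now choose $\delta>0$ with $B(A_0,\delta)\subset W$ and let $U'\df B(A_0,r)$ for a small $r>0$ to be fixed. For $A\in U'$ and each $k\in\N$, let $\vp_k\in\Z^m$ be a nearest integer vector to $A\vq_k$, so that $\|A\vq_k-\vp_k\|\le 1/2$. On the one hand this forces
\[
\mathrm{dist}(A,L_{\vp_k,\vq_k})\le\frac{C\|A\vq_k-\vp_k\|}{\|\vq_k\|}\le\frac{C}{2k\|\vq_0\|}\xrightarrow[k\to\infty]{}0.
\]
On the other hand, applying the key observation with $\mathbf{w}\df\vp_k-A_0\vq_k$ produces $B_k$ such that $A_1^{(k)}\df A_0+B_k$ lies in $L_{\vp,\vq}\cap L_{\vp_k,\vq_k}$; estimating $\|\vp_k-A_0\vq_k\|\le 1/2+nk\|\vq_0\|\cdot\|A-A_0\|$ yields $\|B_k\|\le C_0/(2k)+nC_0\|\vq_0\|\cdot\|A-A_0\|$. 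Choosing $r<\delta/(2nC_0\|\vq_0\|)$ makes the second summand at most $\delta/2$, and for $k$ large the first is also at most $\delta/2$, giving $A_1^{(k)}\in B(A_0,\delta)\subset W$. Hence $L_{\vp_k,\vq_k}\cap L_{\vp,\vq}\cap W\ne\varnothing$ for all large $k$, and $A$ lies in the closure of $\bigcup_{k} L_{\vp_k,\vq_k}$. Thus $U'\subset W\subset Y$ is contained in the closure appearing in \equ{vd3}, proving that property.

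The main subtlety is the balance inherent in \equ{vd3}: we need $\|\vq_k\|\to\infty$ so that $A$ can be made arbitrarily close to some $L_{\vp_k,\vq_k}$, yet this same growth threatens to force the correction $B_k$ inside $L_{\vp,\vq}$ to blow up. The factor $1/k$ produced by our construction of $B_k$, which reflects the linear independence of $\vq$ and $\vq_0$ and is available precisely because $n\ge 2$, exactly cancels this growth and yields a bound uniform in $k$.
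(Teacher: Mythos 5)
Your proof is correct, and it takes a genuinely different route from the paper's. The paper fixes a rational matrix $B\in L_{\vp_0,\vq_0}\cap W$, considers the "star" $E_B$ of all resonant subspaces through $B$, identifies its closure as $B+R_{<n}$ where $R_{<n}$ is the rank-deficient variety, takes the union over all rational $B$ in $L_{\vp_0,\vq_0}\cap W$, and then — after a case split on $n>m$ versus $1<n\le m$ and a change of variables reducing to $\vq_0=\e_n$ — exhibits an explicit open set inside $\{B\in\overline W:B\vq_0=\vp_0\}+R_{<n}$. Your argument instead works pointwise: given any $A$ in a small ball $U'$ around a (not necessarily rational) base point $A_0\in L\cap W$, you construct an explicit one-parameter family $L_{\vp_k,\vq_k}$ with $\vq_k=k\vq_0$ that simultaneously (i) approaches $A$ as $k\to\infty$, and (ii) meets $L\cap W$ via the rank-one correction $B_k=k^{-1}(\vp_k-A_0\vq_k)\vr^T$. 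The key point in both arguments is the same — the rank-one matrix $\mathbf{w}\vr^T$ is exactly an element of $R_{<n}$ killing $\vq$ while taking any prescribed value on $\vq_0$ — but your formulation avoids the $n>m$ vs.\ $n\le m$ dichotomy, the passage through rational base points, and the coordinate change, and the $1/k$ decay built into your $B_k$ makes the competing requirements in \equ{vd3} (subspaces approaching $A$ while still touching $L\cap W$) cancel cleanly. The result you actually prove is slightly stronger: the closure in \equ{vd3} contains an open neighborhood of every point of $L_{\vp,\vq}\cap W$, not merely an open set.
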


%\comm{Most likely we can upgrade it to TD in all cases, need to think more.}

\begin{proof} Clearly $M_{m\times n}(\Q)$ is dense in $\mr$, and,
  furthermore, \linebreak $L_{\vp,\vq}\cap M_{m\times n}(\Q)$ is dense
  in $L_{\vp,\vq}$ for any  $\vq \in\Z^n\nz$ and
  $\vp\in\Z^m$. Moreover, any $B\in M_{m\times n}(\Q)$ lies in an
  $L_{\vp,\vq} $ for some $\vq \in\Z^n\nz$ and $\vp\in\Z^m$. This
  implies \equ{vd1}. % {with $Y$ being any non-empty open subset of $\mr$}. 

Now fix $(\vp_0,\vq_0) \in\Z^m\times(\Z^n\nz)$ {such that $L_{\vp_0,\vq_0}\cap Y\ne\varnothing$}, and  choose an  open $W
 \subset                           {Y}$ which intersects {$L_{\vp_0,\vq_0}$} non-trivially. Then {one can pick} an arbitrary
 $$B\in M_{m\times n}(\Q)\cap L_{\vp_0,\vq_0}\cap W,$$  let $E_B$ be the union of all the subspaces  $L_{\vp,\vq}$ containing $B$ (this clearly includes $L_{\vp_0,\vq_0}$),
{and} write
 $$
 \begin{aligned}
 E_B &=\quad \bigcup_{(\vp,\vq) \in\Z^m\times(\Z^n\nz): B\vq = \vp}\{A\in\mr: A\vq = \vp\}\\
 &=\quad \ \bigcup_{\vq \in\Z^n\nz, \, B\vq\in\Z^m}\{A\in\mr: A\vq  =
 B\vq\} \\ &= \quad B +  \bigcup_{\vq \in\Z^n\nz, \,
   B\vq\in\Z^m}\{C\in\mr: C\vq = 0\}. 
 \end{aligned}
 $$
Note that the set of $\vq \in\Z^n$ such that  $B\vq\in\Z^m$ contains
$N\Z^n$ for some $N\in\N$. Therefore one has  
$${E_B} \supset B + \{C\in M_{m\times n}(\Q): \rank C < n\}.$$  
 %\eq{oveb}
If $n>m$ it is clear that %the above set 
$E_B$ contains $M_{m\times n}(\Q)$, which readily implies the {total density of the collection  \equ{intwithY}} in an even stronger form: namely, that the union in \equ{vd3} is dense in $\mr$.

%  $\overline{E_B} = B + R_{<n}$, where 
 %\eq{oveb}
% $$R_{<n} \df\{C\in\mr: \rank C < n\}.$$

 %On the other hand, if $n \le m$, then 
 In general we see that
 $\overline{E_B} = B + R_{<n}$, where 
 %\eq{oveb}
 $$R_{<n} \df \{C\in\mr: \rank C < n\},$$
which is a proper algebraic subvariety of $\mr$ if $n \le m$. Now let us consider the union of the sets  $E_B$ over all $B\in M_{m\times n}(\Q)\cap L_{\vp_0,\vq_0}\cap W$, and then take the closure, which is easily seen to have the following form:
$$
\overline{\bigcup_{B\in M_{m\times n}(\Q)\cap L_{\vp_0,\vq_0}\cap W} E_B} = \{B\in \overline{W}: B\vq_0 = \vp_0\} + R_{<n}.
$$
If $n=1$ then $E_B = \{B\}$, and the above closure coincides with $ {L_{\vp_0,\vq_0}}\cap \overline{W}$, hence no total density. Now suppose that $1 < n \le m$, and let $D\in\GL_n(\R)$ be such that $\vq_0 = D\e_n$. Then one can write
 $$
 \begin{aligned}
 \{B\in \overline{W}: B\vq_0 = \vp_0\} + R_{<n} &=  \{B\in \overline{W}: BD\e_n = \vp_0\} + R_{<n} \\ &= \big(\{A\in \overline{W}{D}: A\e_n = \vp_0\} + R_{<n} \big)D^{-1},
 \end{aligned}$$
 since $R_{<n}$ is invariant under right-multiplication by invertible matrices. Thus it suffices to prove {the claim under the assumption that $\vq = \e_n$, that is, to show that} for any non-empty open $W\subset\mr$ and any $\vp_0\in\Z^m$, the set  {$\{A\in  \overline{W}: A\e_n = \vp_0\} + R_{<n} $ %has a non-empty interior
contains 
some open neighborhood $V$ of   $L_{\vp_0,\e_n}  \cap W$.} %a neighborhood $V$ of ${L_{\vp_0,\e_n}\cap W = \{A\in  {W}: A\e_n = \vp_0\}}$}.
 
 %To verify the above claim, 
 {For that} it will be convenient to write elements of $\mr$ in a column-vector notation. 
 {Namely, we can write
 $$L_{\vp_0,\e_n}\cap W = \big\{[\begin{matrix} \a_1 & \cdots & \a_{n-1}& \vp_0\end{matrix}]: [\begin{matrix} \a_1 & \cdots & \a_{n-1}\end{matrix}]\in W'\big\}$$
for some non-empty open subset $W'$ of $M_{m\times(n-1)}$. Then it becomes clear that we can   take 
$$V \df \big\{[\begin{matrix} \a_1 & \cdots & \a_{n-1}& \b\end{matrix}]: [\begin{matrix} \a_1 & \cdots & \a_{n-1}\end{matrix}]\in W',\ \b\in\R^m\big\},$$
simply because any matrix $[\begin{matrix} \a_1 & \cdots & \a_{n-1}& \b\end{matrix}]$ with $[\begin{matrix} \a_1 & \cdots & \a_{n-1}\end{matrix}]\in W'$ can be written as $$ [\begin{matrix} \a_1 & \cdots & \a_{n-1}& \vp_0\end{matrix}] + [\begin{matrix} {\bf 0} & \cdots & {\bf 0}& \b - \vp_0\end{matrix}]\in (L_{\vp_0,\e_n}\cap W) + R_{<n}.$$
\ignore{That is, we are looking for all \linebreak matrices $B = [\begin{matrix} \b_1 & \cdots & \b_{n-1}& \b_n\end{matrix}]\in\mr$ which can be written as $A + C$, where 
{$$A \in L_{\vp_0,\e_n}\cap W = \big\{[\begin{matrix} \a_1 & \cdots & \a_{n-1}& \vp_0\end{matrix}]: [\begin{matrix} \a_1 & \cdots & \a_{n-1}\end{matrix}]\in W'\big\}$$
for some non-empty open subset $W'$ of $M_{m\times(n-1)}$,  and $$C \in \big\{ [\begin{matrix} \c_1 & \cdots & \c_{n-1}& \c_n\end{matrix}]: %$[\begin{matrix} \a_1 & \cdots & \a_{n-1}\end{matrix}]\in W'$ 
%and
%the vectors
 \c_1, \dots, \c_n\text{ are linearly dependent}\big\}.$$
%The first observation is 
Now let us observe} that the last column of the matrix $C$ is
uniquely determined by that of $B$, that is, one can put $\c_n\df \b_n - \vp_0$. 
Thus it is clear that $B$  can be written as $A + C$ as above if and only if $\b_n$ is arbitrary, and $[\begin{matrix} \b_1 & \cdots & \b_{n-1}\end{matrix}]$ belongs to $$W' + \big\{ [\begin{matrix} \c_1 & \cdots & \c_{n-1}\end{matrix}] : [\begin{matrix} \c_1 & \cdots & \c_{n-1}&\b_n - \vp_0\end{matrix}]\in R_{<n}\big\},$$
which is a non-empty open subset of $M_{m\times(n-1)}$ {containing $W'$.}} This finishes the proof.}
%  It follows that  $$\overline{E_B} = B + \{C\in\mr: \rank C < n\}.$$
 %The latter set coincides with $\mr$ if $n>m$, %If $n=1$ we have $E_B = B$, and if  
 \end{proof}

	%Applying Theorem \ref{thm: special axiomatic theorem}, 
 %Lemma 
 %\ref{specialcase} and Proposition \ref{verydensematrices} to the standard Diophantine system \equ{classical}, 
% {With the help of this proposition we can easily derive}
{We can now use the above proposition to furnish the}
 %obtain a strengthening of 
\begin{proof} [Proof of  Theorem \ref{thm: classical 
   theorem}(a)]%{from Theorem \ref{thm: special axiomatic theorem}}. 
   Indeed, it is clear that the collection $\mathcal{L}$ as in  \equ{ell} is aligned with $\mathcal{X}_{m,n}$, and its total density is provided by Proposition \ref{verydensematrices}. %Finally we can 
   Let $\mathcal{R}$ be the collection of all proper rational affine subspaces of $\mr$. Obviously if $L$ and $R$ are two subspaces of a finite-dimensional vector space such that  $L\cap R$ has non-empty interior in $L$, then $\dim(L\cap R) = \dim(L)$, which implies that $L\subset R$. Hence $\mathcal{L}$ respects any subspace of $\mr$. This verifies all the conditions of 
   Theorem \ref{thm: special axiomatic theorem}; and since elements of $\mathcal{L}$ %are connected
{have no isolated points}, part (ii) implies the uncountability of $\UA^*_{m,n}(f)$ for any positive non-increasing $f$.\end{proof}
   
{More generally,  one can strengthen Theorem \ref{thm: classical 
   theorem}(a) using the fact that $\mathcal{L}$ respects  {closed} analytic submanifolds of $\mr$. Recall that ${\Psi}: U\to \R^k$, where $U$ is an open subset of $\R^d$ with $d \leq k$, is called a {\sl real analytic immersion} if it is injective, each of its
coordinate functions ${\Psi}_i :
U\to \R \ (i=1, \ldots, k)$ is infinitely differentiable, 
the Taylor series of each ${\Psi}_i$ converges in a neighborhood of
every $\x \in U$,  
and the derivative mapping $d_{\x}{\Psi}: \R^d \to \R^k$ has
  rank $d$.
By a  {\sl  $d$-dimensional real analytic submanifold {of}
  $\R^k$} we mean a subset $Y \subset \R^k$ such that for
every $\y \in Y$ there is a neighborhood $V
\subset \R^k$ containing $\y$, an open set $U \subset \R^d$, 
and a real analytic immersion ${\Psi}: U \to \R^k$ such that
${V} \cap {Y} = {\Psi}({U}).$} %By a real analytic
%{\sl curve} (resp., {\sl surface}) we mean a connected one-dimensional (resp., two-dimensional) real analytic
%submanifold. 
% A mapping ${h} : {Y} \to \R^m$ is {\sl real analytic} if for any ${\x}, {f}, {U}$ as above, each coordinate function of ${h} \circ {f}: {U} \to \R^m$ is infinitely differentiable and its Taylor series converges in some neighborhood of ${f}^{-1}({\x})$.

\smallskip
The crucial property, which distinguishes real analytic submanifolds from
smooth manifolds and follows easily from definitions, 
%\comm{(still it would be nice to have a reference)}, 
is the
following

	\begin{lemma}\label{lem: crucial} 
	{Let $L$ and $R$ be real analytic submanifolds of $\R^k$, equipped with
the topology they inherit as subsets of $\R^k$. Suppose that $L$ is
connected, $R$ is closed, and $L \cap R$ has nonempty interior in
$L$. Then $L \subset R$. }
% Let $L, R$ be real
%analytic submanifolds of $\R^d$ (where we equip them with the topology inherited from the
%ambient space $\R^d$). Suppose that {$L$ is connected  and} the intersection $L \cap
%R$ has nonempty interior in $L$, then
%this intersection is open in $L$; and thus, if additionally
%$L$ is connected and 
%$R$ is closed, then
%$L \subset R$.
   \end{lemma}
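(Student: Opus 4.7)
The plan is to show that the set
\[
V \df \{y \in L : y \text{ has an open neighborhood in } L \text{ contained in } R\}
\]
is nonempty, open, and closed in $L$; the conclusion $L\subset R$ then follows from connectedness of $L$.

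Non-emptiness is immediate from the hypothesis that $L\cap R$ has nonempty interior in $L$, and openness is built into the definition. The main task, and the only place where real analyticity enters, is to verify that $V$ is closed in $L$. For this I would take $y\in\overline{V}\subset L$; since $R$ is closed and $V\subset R$, already $y\in R$. Then I would pick a real analytic parametrization $\Psi:U\to L$ of a neighborhood of $y$ in $L$ with $U\subset\R^d$ a connected open set and $\Psi(x_0)=y$, together with a neighborhood $W\subset\R^k$ of $y$ and real analytic functions $f_1,\dots,f_r$ defined on $W$ such that $R\cap W=\{z\in W : f_1(z)=\cdots=f_r(z)=0\}$ (available because $R$ is a real analytic submanifold near $y$). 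Shrinking $U$ if necessary, I may assume $\Psi(U)\subset W$, so the compositions $g_i\df f_i\circ \Psi$ are real analytic functions on the connected open set $U$.

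For $n$ large enough, a point $y_n\in V$ lies in $W$, and its neighborhood in $L$ contained in $R$ pulls back under $\Psi$ to a nonempty open subset of $U$ on which every $g_i$ vanishes. The identity principle for real analytic functions on a connected open subset of $\R^d$ then forces $g_i\equiv 0$ on all of $U$, i.e.\ $\Psi(U)\subset R$. Since $\Psi(U)$ is a neighborhood of $y$ in $L$, this shows $y\in V$, establishing closedness.

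The main (and only) obstacle is the local step: invoking the defining equations $f_i$ of $R$ and the identity principle for real analytic functions to pass from a nonempty open piece of $L$ lying in $R$ to a full neighborhood in $L$ lying in $R$. Once this is in place, openness, closedness and connectedness combine to give $V=L$, hence $L\subset R$.
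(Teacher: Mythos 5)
Your proof is correct and follows the same broad strategy as the paper's: both reduce the lemma to an open--closed--nonempty argument for the interior $W$ of $L\cap R$ in $L$ and then invoke connectedness, with the closedness step resting on uniqueness of analytic continuation. The two arguments diverge in how the local step is carried out. The paper parametrizes \emph{both} $L$ and $R$ by analytic immersions $\Psi_1,\Psi_2$, inverts $\Psi_2$ via the analytic inverse function theorem, performs a coordinate change so that the pullback of $W$ under $\Psi_2$ sits inside a coordinate plane, and then compares analytic continuations of the two parametrizations to conclude that a neighborhood of the boundary point lies in $L\cap R$. You instead parametrize only $L$, cut out $R$ near the boundary point by local analytic defining equations $f_1=\cdots=f_r=0$, and apply the identity principle directly to the pullbacks $f_i\circ\Psi$ on a connected chart domain of $L$. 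Your route is cleaner and closer to the textbook ``zero set of an analytic function'' argument, and it sidesteps the second parametrization and the somewhat delicate change of variables in the paper. The one ingredient you should make explicit is that an embedded real analytic submanifold of $\R^k$ is, near each of its points, the common zero set of finitely many real analytic functions; this is standard and follows from the real analytic rank (or implicit function) theorem, exactly as the paper itself uses \cite[\S 1.8--1.9]{Krantz}.
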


%\comm{Barak, as I was saying before -- I think we need to add a little bit here. First some basic references, and then it'd be great to actually have a short proof of this lemma, especially since we made a mistake before. You are our real analytic expert, maybe you can do it?}
{To make the paper self-contained we provide the}

\begin{proof}
  {Let
  $d_1,  d_2$ denote the dimensions of $L$ and $R$. Since $L\cap R$
  has nonempty interior in $L$, and by invariance of dimension under
  immersions, we have $d_1 \leq d_2$. Let $W$ denote the interior (in
  $L$) of $L \cap R$. If $W$ is closed (in $L$), then by connectedness
  of $L$ we have $L = W$, and hence $L \subset R$ and there is nothing
  to prove. We will assume that $W$ is not closed in $L$ and derive a
  contradiction.}

{Let $p \in \overline{W} \sm W$. Since the closure is taken in $L$ we
  have $p \in L$, and since $R$ is closed we have $p \in R$. By the
  %definitions of 
  real analyticity of $L$ and $R$,
 %  manifolds, 
 there are open subsets
  $U_1 \subset \R^{d_1}, \, U_2 \subset \R^{d_2}$ and real analytic
  immerions $\Psi_i: U_i \to \R^k$ such that $\Psi_1$ parameterizes a
  neighborhood of $p$ in $L$, and $\Psi_2$ parameterizes a neighborhood
  of $p$ in $R$. For $i=1,2$, let $p_i \in U_i$ such that $\Psi_i(p_i) = p$, and let
  $W_i = \Psi_i^{-1}(W)$. Then $p_1$ belongs to the closure of $W_1$
  in $U_1$, and $W_1$ is open in $U_1$; thus by the uniqueness of
  analytic continuation (see \cite[\S 1.2]{Krantz} or \cite[\S
  VI.6]{KN}), $\Psi_i$ is uniquely  
  determined in a neighborhood of $p$ by $\Psi_1|_{W_1}$. In
  particular, $L$ is uniquely determined in a neighborhood of $p$ by
  $W$.} 
%
%{Assume first that $d_2 = d_1$. Then $W_2$ is a $d_2$-dimensional
%manifold and thus is open in $U_2$, and a similar argument using
%analytic continuation shows that $R$ is uniquely determined in a
%neighborhood of $p$ by $W$. This implies that $R$ and $L$ coincide in
%a neighborhood of $p$, or in other words $p$ belongs to the interior
%of $L \cap R$, a contradiction.}
%
%Finally assume that $d_2 > d_1$. 

By the inverse function theorem for real analytic
  immersions (see \cite[\S 1.8]{Krantz}), there is a real analytic
  inverse 
  $\Psi_2^{-1} $ to $\Psi_2$. 
  %We let
  %$$W' \df \Psi_2^{-1}(W) \subset
  %U_2 \ \ \ \text{ and } \ \ \ \  p' \df \Psi_2^{-1}(p).$$
  %Also
  {Abusing notation, denote by $\R^{d_1}$ the  coordinate plane in
  $\R^{d_2}$ defined as
  $$\R^{d_1} \df  \{(x_1, \ldots, x_{d_2} )\in
\R^{d_2} : x_{d_1+1} = \cdots = x_{d_2}=0\}.$$
By making $U_2$ smaller and by replacing $U_2$ with its image under a real analytic
diffeomorphism, we can assume that 
  $W_2$ is %an 
  open %subset of 
  in $U_2 \cap \R^{d_1}$; indeed, 
to see that such a change of variables exists, see \cite[Proof of
Thm.\ 1.9.5]{Krantz}.
Once again, using the uniqueness of analytic continuation
of $\Psi_2|_{W_2}$, we see %again 
that $\Psi_2|_{U_2 \cap \R^{d_1}}$ is
determined by $\Psi_2|_{W_2}$. Since the two analytic continuations
agree, we see 
 that $p$ belongs to the interior of $L \cap R$.} 
%Consider the partial derivatives of $\Psi_2$ of all
%orders, where we only take derivatives along the directions of 
%$\R^{d_1}$. Since $W'$ is open and $p' \in \overline{W'}$ we have that
%these partial derivatives are determined by their values on $W'$, and
%hence $\Psi_2$ is uniquely determined on a neighborhood of $p'$ by
%$\Psi_2|_{W'}$. That is, $\Psi_2|_{\R^{d_1}}$ is an analytic
%continuation of $\Psi_2|_{W'}$. By a similar argument, $\Psi_1$ is an
%analytic continuation of $\Psi_1|_{\Psi_1^{-1}(W)}$. By uniqueness of
%  analytic continations (see \cite[Chap. 1]{Krantz}) the image of
 % $\Psi_1$ 
\end{proof}

{Arguing as in the above proof of Theorem \ref{thm: classical 
   theorem}(a), we arrive at}
        \begin{theorem} \label{cor: matrices}
	Let $m,n\in\N$ with $n > 1$, {and let a non-increasing function
        $f: \R_{>0} \to \R_{>0}$  and an
        arbitrary countable  
        collection $\{R_\ell\}$ of proper {closed} analytic submanifolds     of $\mr$ be given}. Then
        $\UA_{m,n}(f)  \smallsetminus \cup_{\ell} R_\ell$	 
	%there exist 
	is uncountable and dense in $\mr$. 
	%Moreover, if $n > m$, the same can be done for $R_j$	being proper  of $\mr$.
	% many $x\in S \smallsetminus  \bigcup_{j} R_j$ such that $\varphi_k(x)$ is $(\mathcal{A}_k,H_k,\UA_k,f_k)$-singular $\forall\,k\in\N$.
	 %\comm{No restrictions on $H_k$, $\UA_k$ or $f_k$!}
	\end{theorem}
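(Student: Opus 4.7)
The plan is to apply Theorem~\ref{thm: special axiomatic theorem} (with a single index $k=1$) to the standard Diophantine system $\mathcal{X}_{m,n}$ defined in \equ{classical}, taking $Y = \mr$, $\varphi = \Id$, the collection $\mathcal{L}$ of rational affine subspaces defined in \equ{ell}, and $\mathcal{R}$ to be the given collection $\{R_\ell\}$ of proper closed analytic submanifolds of $\mr$. This is an exact parallel of the proof of Theorem~\ref{thm: classical theorem}(a) sketched just before this statement, with the added feature that the set of avoided submanifolds is taken to be arbitrary rather than just rational affine.

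First I would verify alignment, which is immediate from the definitions: for any $(\vp,\vq)$ with $\vq\ne 0$, the generalized distance function $d_{\vp,\vq}(A) = \|A\vq - \vp\|$ vanishes identically on $L_{\vp,\vq}$. Next, by Proposition~\ref{verydensematrices} applied with $Y = \mr$, the collection $\mathcal{L}$ is totally dense in $\mr$. Since each $R_\ell$ is a proper closed analytic submanifold of $\mr$, it is nowhere dense in $\mr$ (being of strictly smaller dimension than the ambient space), so the totally dense property relative to $\mathcal{R}$ holds automatically.

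The crucial step is to check that $\mathcal{L}$ respects $\mathcal{R}$. Each $L_{\vp,\vq}$ is an affine subspace of $\mr$ of dimension $m(n-1) \geq 1$ (since $n \geq 2$), and in particular is a connected real analytic submanifold of $\mr$. Applying Lemma~\ref{lem: crucial} to the pair $(L_{\vp,\vq}, R_\ell)$, we conclude that if $L_{\vp,\vq} \cap R_\ell$ has nonempty interior in $L_{\vp,\vq}$, then $L_{\vp,\vq}\subset R_\ell$; this is exactly the respect property.

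With these three verifications in hand, Theorem~\ref{thm: special axiomatic theorem}(a) yields density of $\UA_{m,n}(f)\smallsetminus \bigcup_\ell R_\ell$ in $\mr$. For uncountability, observe that each $L_{\vp,\vq}$ is a positive-dimensional affine subspace and hence has no isolated points, so part (b) of Theorem~\ref{thm: special axiomatic theorem} applies. The main potential obstacle is the respect property, but this is a direct consequence of Lemma~\ref{lem: crucial} and requires no additional work beyond what is already established in the paper.
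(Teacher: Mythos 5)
Your proposal is correct and matches the paper's own argument, which it gives tersely by saying ``arguing as in the above proof of Theorem \ref{thm: classical theorem}(a)'' while invoking Lemma \ref{lem: crucial} to upgrade the respect property from rational affine subspaces to arbitrary closed analytic submanifolds. You have simply spelled out the same three verifications (alignment, total density via Proposition \ref{verydensematrices}, respect via Lemma \ref{lem: crucial}) and the same use of part (b) for uncountability.
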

	
	{Likewise, for any  weight vector $    \vw = (\va,\vb) \in \R_{>0}^{m+n}$ as in \S\ref{subsec: weights intro}, in view of    \equ{reduction}, the conclusion of the above theorem holds verbatim for the set $\UA_{m,n}(f, \vw) $.}
%\comm{It is strange that we haven't noticed that before. Anything else we should change here?}
%Hence it immediately follows from Theorem \ref{thm: classical theorem}(a) that   $\UA^*_{m,n}(f, \vw)$	is
%     uncountable and dense.}	
	%Now recall that, as was mentioned in {Remark \ref{doesnotmatter}(2)},
% and  \ref{doesnotmatteragain}, 
%once the conditions of {Theorem \ref{thm: special axiomatic theorem}} 
 %Lemma 
 %\ref{specialcase} 
% are met, its conclusion follows regardless of how one modifies height functions and distance functions without changing the zero loci of the latter. In particular one can 
% choose a weight vector $    \vw = (\va,\vb) \in \R_{>0}^{m+n}$ as in \S\ref{subsec: weights intro} and, again with $ \I =  \Z^m\times (\Z^n\nz)$, define 
%\eq{weights}{  h_{\vp,\vq} \df \|\vq\|_\vb\quad \text{
%          and}\quad d_{\vp,\vq}(A) \df  \|A\vq -
%        \vp\|_\va.
%}
%Then the same proof as above readily produces {not only part (a) of Theorem \ref{thm:
%  intersection with weights}, but also its stronger version with rational affine subspaces replaced by an arbitrary collection of proper analytic submanifolds of $\mr$.}
\ignore{Concerning weighted approximation as in \S \ref{subsec: weights
  intro}, for a fixed $\vw$ one can use Theorem \ref{thm: general
  axiomatic theorem} to  
   construct uncountably many nontrivial elements of
   $\UA(f, \vw)$.} 
%But in fact one can do {much more}. Take 
{{Moreover, one can take} a subset $\mathcal{W}$ of
   $\R_{>0}^{m+n}$ and say that  \amr\ is $(f, \mathcal{W})$-\textsl{uniform}, denoted
   with some abuse of notation by $A\in \UA_{m,n}(f, \mathcal{W})$, if  
for every sufficiently
large $t$  one can find
%a nonzero integer 
$\vq  \in \Z^n\nz$ and $\vp \in \Z^m$
 such that  for any ${\vw = (\va,\vb)}\in \mathcal{W}$ {the inequalities
\equ{diweighted} hold}. Clearly this is 
stronger than being $(f, \vw)$-{uniform} for any $\vw\in
\mathcal{W}$. 
%\comm{Perhaps at this point  we can mention the dictionary between
 % approximation and divergence of trajectories.} \textcolor{red}{BW: I
%vote against that. The dynamical picture is of course interesting but not for
%this paper.} 
%\textcolor{brown}{{Now let us say that $\mathcal{W}$ as above is {\sl away from walls} (cf.\ \cite{KWe1}),  abbreviated as AFW, if
%  $$\inf_{\vw = (\omega_1,\dots,\omega_{m+n})\in \mathcal{W}}\min_{i}\omega_i > 0.$$}}
%  \comm{I think we have a mistake here. Small $\alpha_i$ do not cause any problems, instead we should have an upper bound on them. See  \equ{reduction} for an explanation. So maybe we should remove this definition? see below.}
The next statement
follows from {Theorem \ref{thm: special axiomatic theorem}}   as easily
as the previous one did. It immediately implies {both parts of} Theorem \ref{thm:
  intersection with weights}.
 \begin{theorem} \label{cor: weighted matrices}
	Let $m,n\in\N$ with $n > 1$.  Suppose that for any $k\in\N$ we are given a
        non-increasing function   $f_k: \R_{>0} \to \R_{>0}$  and %an
       %AFW  
       {a subset $\mathcal{W}_k$ of $  \R_{>0}^{m+n}$ such that}
       \eq{afw}{{\sup_{ (\va,\vb)\in \mathcal{W}_k}\max_{i}\alpha_i <\infty\ \ \text{ and }\inf_{(\va,\vb)\in \mathcal{W}_k}\min_{j}\beta_j > 0.}}Then  for any  countable  
        collection $\{R_\ell \}$ of proper  analytic
        submanifolds of $\mr$, the set
        $$\bigcap_k \UA_{m,n}(f_k, \mathcal{W}_k)
        \smallsetminus \bigcup _{\ell} R_\ell$$ 	
	%there exist 
	is  {uncountable and} dense in $\mr$. In particular,  the set
        $$\bigcap_{\vw\in\R_{>0}^{m+n}} \UA_{m,n}(f, \vw)  \smallsetminus \bigcup
        _{\ell} R_\ell$$ is  {uncountable and} dense in $\mr$ for any  $\{R_\ell\}$ as above
        and any non-increasing positive function $f$.  
	%Furthermore, if $n > m$, the same can be done for $R_j$	being proper algebraic  subvarieties of $\mr$.
	% many $x\in S \smallsetminus  \bigcup_{j} R_j$ such that $\varphi_k(x)$ is $(\mathcal{A}_k,H_k,\UA_k,f_k)$-singular $\forall\,k\in\N$.
	 %\comm{No restrictions on $H_k$, $\UA_k$ or $f_k$!}
	\end{theorem}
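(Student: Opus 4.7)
The plan is to deduce Theorem \ref{cor: weighted matrices} from the already-established unweighted version (Theorem \ref{cor: matrices}, or rather the proof of it via Theorem \ref{thm: special axiomatic theorem} applied with countably many approximating functions) by strengthening the inclusion \equ{reduction} to one that is uniform across each family $\mathcal{W}_k$. The key point will be to find, for each $k$, a single non-increasing unweighted approximating function $\tilde f_k$ such that $\UA_{m,n}(\tilde f_k)\subset \UA_{m,n}(f_k,\mathcal{W}_k)$. The main obstacle is the requirement that one and the same integer pair $(\vp,\vq)$ serve as a witness for every weight $\vw\in \mathcal{W}_k$ simultaneously, and it is exactly the boundedness assumption \equ{afw} that makes this possible.

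For each $k\in\N$ I will set
$$\alpha_k^* \df \sup_{(\va,\vb)\in\mathcal{W}_k}\max_i \alpha_i, \qquad \beta_k^* \df \inf_{(\va,\vb)\in\mathcal{W}_k}\min_j \beta_j,$$
both finite and positive by \equ{afw}, and define $\tilde f_k(t)\df f_k\bigl(t^{1/\beta_k^*}\bigr)^{\alpha_k^*}$, which is positive and non-increasing. After the harmless reduction to $f_k(s)\to 0$ (so $f_k(s)\le 1$ for large $s$; otherwise $\UA_{m,n}(f_k,\vw)=\mr$ by the weighted Dirichlet theorem), the inclusion $\UA_{m,n}(\tilde f_k)\subset \UA_{m,n}(f_k,\mathcal{W}_k)$ will follow from a short computation: given $A\in\UA_{m,n}(\tilde f_k)$ and large $s$, choose $(\vp,\vq)$ witnessing uniform $\tilde f_k$-approximation at height $t\df s^{\beta_k^*}$, so that $\|\vq\|\le t$ and $\|A\vq-\vp\|\le f_k(s)^{\alpha_k^*}$. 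Then for every $(\va,\vb)\in\mathcal{W}_k$ and every pair of indices $i,j$ one checks $|q_j|^{1/\beta_j}\le t^{1/\beta_k^*}=s$ and $|A_i\vq - p_i|^{1/\alpha_i}\le f_k(s)^{\alpha_k^*/\alpha_i}\le f_k(s)$, using $\alpha_i\le \alpha_k^*$ and $f_k(s)\le 1$. The same pair $(\vp,\vq)$ thus witnesses $(f_k,\vw)$-uniformity for every $\vw\in\mathcal{W}_k$.

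Having the uniform reduction in hand, it will suffice to verify that $\bigcap_k \UA_{m,n}(\tilde f_k)\smallsetminus\bigcup_\ell R_\ell$ is uncountable and dense in $\mr$, and this will be accomplished by invoking Theorem \ref{thm: special axiomatic theorem} with $Y=X_k=\mr$, $\mathcal{X}_k = \mathcal{X}_{m,n}$, $\varphi_k=\Id$, the approximating functions $\tilde f_k$, $\mathcal{R}=\{R_\ell\}$, and the collection $\mathcal{L}$ from \equ{ell}. The verification of the hypotheses proceeds exactly as in the proof of Theorem \ref{cor: matrices}: Proposition \ref{verydensematrices} yields the total density of $\mathcal{L}$, alignment with $\mathcal{X}_{m,n}$ is immediate from the definition of $L_{\vp,\vq}$, and each $L_{\vp,\vq}$ is a connected closed analytic submanifold of $\mr$ of positive dimension $m(n-1)\ge 1$, so it has no isolated points, and by Lemma \ref{lem: crucial} the collection $\mathcal{L}$ respects every closed analytic submanifold, in particular every $R_\ell$.

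For the final assertion, about $\bigcap_{\vw\in\R_{>0}^{m+n}} \UA_{m,n}(f,\vw)$, I will stratify the parameter space by setting $\mathcal{W}_k\df \{(\va,\vb)\in\R_{>0}^{m+n}:\max_i \alpha_i\le k,\ \min_j \beta_j\ge 1/k\}$. Each $\mathcal{W}_k$ satisfies \equ{afw} by construction, and $\bigcup_k\mathcal{W}_k=\R_{>0}^{m+n}$, so that $\bigcap_k \UA_{m,n}(f,\mathcal{W}_k)\subset \bigcap_{\vw\in\R_{>0}^{m+n}} \UA_{m,n}(f,\vw)$; applying the first part of the theorem with $f_k\df f$ then yields the desired statement.
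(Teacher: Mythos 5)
Your proof is correct, but it takes a genuinely different route from the paper's. The paper handles the weights by building them directly into the Diophantine systems: for each $k$ it applies Theorem~\ref{thm: special axiomatic theorem} with generalized heights $h_{k,(\vp,\vq)} = \sup_{\vw\in\mathcal{W}_k}\|\vq\|_\vb$ and distances $d_{k,(\vp,\vq)}(A) = \inf_{\vw\in\mathcal{W}_k}\|A\vq-\vp\|_\va$, using \equ{afw} only to guarantee that these suprema are finite and the infima define continuous functions, and then invoking Remark~\ref{doesnotmatter}(2) (the irrelevance of heights and of the precise distance function once zero sets are fixed) to conclude as in Theorem~\ref{cor: matrices}. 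You instead keep the standard unweighted system $\mathcal{X}_{m,n}$ throughout and absorb the weights into the approximating function, proving a ``uniform'' version of the containment \equ{reduction}: defining $\tilde f_k(t) = f_k(t^{1/\beta_k^*})^{\alpha_k^*}$ and checking that a single witness pair $(\vp,\vq)$ works for all $\vw\in\mathcal{W}_k$, so $\UA_{m,n}(\tilde f_k) \subset \UA_{m,n}(f_k,\mathcal{W}_k)$; this reduces the problem to the unweighted countable-intersection case, which is then handled by the same invocation of Theorem~\ref{thm: special axiomatic theorem}. Both approaches rely on the same abstract theorem and verify the same hypotheses (total density of $\mathcal{L}$ via Proposition~\ref{verydensematrices}, alignment, respect of analytic submanifolds via Lemma~\ref{lem: crucial}); the paper's route exploits the flexibility of the Diophantine-system framework and avoids your auxiliary normalization $f_k\le 1$ (which, by the way, is more cleanly justified by replacing $f_k$ with $\min(f_k,1)$ and using monotonicity of $\UA_{m,n}(\cdot)$ than by appealing to a weighted Dirichlet theorem), while your route is arguably more elementary in that it explicitly exhibits the weighted set as containing a standard unweighted one. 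Your final stratification for the ``in particular'' clause coincides with the paper's.
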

        \begin{proof} Again we take $Y = \mr = X_k$, $ \I =  \Z^m\times (\Z^n\nz)$, and $\varphi_k = \Id$ for each $k$,
        {and, to define}         %But now 
        the Diophantine systems associated with each $X_k$, %will be different. Namely  we will 
        {let}
        \begin{equation}\label{eq: Shreyasi and Nattalie} h_{k,(\vp,\vq)} \df \sup_{\vw\in \mathcal{W}_k}\|\vq\|_\vb\quad \text{
          and}\quad d_{k,(\vp,\vq)}(A) \df \inf_{\vw\in \mathcal{W}_k}\|A\vq -
        \vp\|_\va.
        \end{equation}
%Then the proof is finished by alluding to   
Because of %the AFW property of each of the sets $\mathcal{W}_k$
{\equ{afw}}, for any $(\vp,\vq)$ and any $k$ %one has 
{the value $h_{k,(\vp,\vq)}$ is finite} and %also 
{the function $d_{k,(\vp,\vq)}$ is continuous}.
%(A) = 0$ if and only if $A\in L_{\vp,\vq}$.  
 Thus the same argument, in view of {Remark \ref{doesnotmatter}(2)}, yields the proof of the first part of the theorem.
%which says that nothing depends on a particular
%choice of heights and generalized distance functions with the same
%zero loci. 
     %  where we noticed that reader that assumptions \ref{item: a}--\ref{item: d} of Theorem \ref{thm: general axiomatic theorem} can be checked just like it was done for the proof of Corollary \ref{cor: matrices}, and hence the conclusion follows. 
     And for the `in particular' part one simply writes $\R_{>0}^{m+n}$ as the union of countably many %AFW 
     subsets {$\mathcal{W}_k$
     satisfying \equ{afw}.}
        \end{proof}}

\section{{Uniformly approximable row vectors \\ on analytic submanifolds and some fractals}}\label{sbmfolds}

%\comm{New stuff until the end of the section, nothing blue from this point.}
%\smallskip

Our next goal is to apply Theorem \ref{thm: special axiomatic theorem} to study uniform approximation on submanifolds $Y$ of $\mr$. It is clear that the method we are using in this paper (which is essentially Khintchine's original argument) is not applicable if $\dim Y = 1$; and indeed, there are very few results in the literature dealing with singular vectors on curves, with many open questions, {see e.g.\  \cite{PR} and references therein}. However the method does work if $m=1$, $Y$ is connected analytic submanifold of $\R^n\cong M_{1,n}(\R)$,  and the dimension of $Y$ is at least $2$, see Theorem \ref{cor: linear forms manifolds simple}.
% (or if $Y$ is a fractal subset of $\R^n$ of a certain type), as was shown in \cite{KMW}: there it was proved that for such $Y$ and arbitrary $f$ there exists a dense uncountable set of $f$-uniform vectors. 

In this section we will generalize the aforementioned theorem. 
%results from \cite{KMW}. 
But first let us observe that the situation gets more complicated when $\min(m,n) > 1$. The next proposition gives an example of a three-dimensional submanifold (in fact, an affine subspace) of $M_{2,2}(\R)$ which does not contain any $f$-uniform matrices if  $f$ decays rapidly enough. %Then $\hat \omega(A) 
{Let us recall that a real number  $\alpha$ is badly approximable if \eq{ba}{
\inf_{q\in \mathbb{Z}\nz}
 {|q|} \dist(\alpha q, \Z)
>0.}}

\begin{proposition}\label{kolya's example}
Let $\alpha\in \mathbb{R}$ be a badly approximable number, let $\lambda = \frac{\sqrt{5}+3}{2}$, and let   $Y$ be the set of $2\times 2$ matrices of the form $ A = \left(
\begin{array}{cc}
\alpha &* \cr
* & *
\end{array}
\right)$. Then $
\hat{\omega}(A)\le  \lambda$ for any {totally irrational} $A\in Y$; that is, no  {totally irrational} $A\in Y$ is $f$-uniform as long as $f$ decays faster than {$\phi_{\lambda+\vre}$ for some $\vre>0$}. 
%\comm{Kolya, do you agree with this change?} {\color{orange} I agree.}
\end{proposition}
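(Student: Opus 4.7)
The plan is first to reduce the $2\times 2$ problem to a statement about row vectors. Writing $A = \left(\begin{smallmatrix}\alpha&\beta\\ \gamma&\delta\end{smallmatrix}\right)$ and using the sup-norm, any inequality $\|A\vq-\vp\|\le f(t)$ forces the first-row inequality $|\alpha q_1+\beta q_2-p_1|\le f(t)$, hence $\hat\omega(A)\le\hat\omega(\alpha,\beta)$ with $(\alpha,\beta)\in M_{1,2}(\R)$ treated as a row vector. Since total irrationality of $A$ entails $\Q$-linear independence of $1,\alpha,\beta$, it suffices to prove that $\hat\omega(\alpha,\beta)\le\lambda$ whenever $\alpha$ is badly approximable and $1,\alpha,\beta$ are $\Q$-linearly independent.

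I would prove this reduced statement using the sequence of best approximations $(\vq_k,p_k)$ to the linear form $L(\vq)\df\alpha q_1+\beta q_2$, with $Q_k\df\|\vq_k\|_\infty$ strictly increasing and $L_k\df|L(\vq_k)-p_k|$ strictly decreasing. Assuming $\hat\omega(\alpha,\beta)\ge\lambda'$ for some $\lambda'>\lambda$ gives $L_k\le Q_{k+1}^{-\lambda'}$ for large $k$. The key identity
\[
\alpha D_k-D_k'=\vre_k\,q_{2,k+1}-\vre_{k+1}\,q_{2,k},\qquad D_k\df q_{1,k}q_{2,k+1}-q_{1,k+1}q_{2,k},
\]
with $\vre_i\df L(\vq_i)-p_i$ and $D_k'\df p_kq_{2,k+1}-p_{k+1}q_{2,k}\in\Z$, yields via bad approximability of $\alpha$ the lower bound $L_kQ_kQ_{k+1}^2\gtrsim 1$, provided $D_k\ne 0$. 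The degenerate case $q_{2,k}=0$ is easily ruled out for large $k$ (\equ{ba} would give $L_k\ge c/Q_k$, clashing with $L_k\le Q_{k+1}^{-\lambda'}\le Q_k^{-\lambda'}$), and by passing to a subsequence one arranges $D_k\ne 0$.

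Combining the two bounds yields $Q_k\gtrsim Q_{k+1}^{\lambda'-2}$; similarly applying the identity to the pair $(\vq_{k-1},\vq_{k+1})$ gives $Q_{k-1}Q_{k+1}^2\gtrsim Q_k^{\lambda'}$, whence the growth ratio $\rho_k=\log Q_{k+1}/\log Q_k$ must satisfy $(\lambda'-1)/2\le\rho_k\le 1/(\lambda'-2)$, giving a contradiction for $\lambda'>3$. The main obstacle is to sharpen this crude bound to $\lambda'\le\lambda=(3+\sqrt5)/2$: this requires a finer analysis exploiting triples $(\vq_{k-1},\vq_k,\vq_{k+1})$ of consecutive best approximations together with the observation that $\lambda$ is a root of $x^2-3x+1$, which corresponds to the Fibonacci-like extremal regime $Q_{k+1}\asymp Q_k^\phi$ with $\phi=(1+\sqrt5)/2$. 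The sharp upper bound $\lambda$ comes from showing that this extremal regime is incompatible with the badness of $\alpha$, in the spirit of Roy's work on extremal numbers and Moshchevitin's computation of the $\hat\omega$-spectrum for two-dimensional linear forms.
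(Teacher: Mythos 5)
Your reduction to the row vector $(\alpha,\beta)$ is correct and matches the paper's: total irrationality of $A$ forces $1,\alpha,\beta$ to be $\Q$-linearly independent, and the first-row inequality dominates, so $\hat\omega(A)\le\hat\omega(\alpha,\beta)$. But from there your argument diverges and, as you yourself note, does not close. Your direct best-approximation analysis only yields $\hat\omega(\alpha,\beta)\le 3$ (already from $Q_k\gtrsim Q_{k+1}^{\lambda'-2}$ together with $Q_{k+1}>Q_k$), which falls short of the target $\lambda=\tfrac{3+\sqrt5}{2}\approx 2.618$. The ``finer analysis exploiting triples of consecutive best approximations'' that you gesture at is precisely the hard content of Jarník's theorems, and the proposal gives no indication of how to carry it out; in its present state this is a genuine gap, not a complete proof.

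The paper avoids the difficulty entirely by a cleaner structural route through the \emph{dual} (column-vector) picture. Setting $\v=(\alpha,\beta)^T$, bad approximability of $\alpha$ immediately gives a bound on the \emph{ordinary} simultaneous exponent: $\inf_{q\ne0}q\,\dist(\v q,\Z^2)>0$, hence $\omega(\v)\le 1$. Then two classical Jarník inequalities are invoked as black boxes: the 1938 inequality $\omega(\v)\ge \hat\omega(\v)^2/\bigl(1-\hat\omega(\v)\bigr)$ yields $\hat\omega(\v)\le\tfrac{\sqrt5-1}{2}$, and the 1954 transference identity $\hat\omega(\v)+1/\hat\omega(\v^T)=1$ then gives $\hat\omega(\v^T)\le\tfrac{3+\sqrt5}{2}=\lambda$ exactly. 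The key structural point you are missing is that bad approximability of $\alpha$ most naturally controls the \emph{asymptotic column} exponent $\omega(\v)$, not the uniform row exponent $\hat\omega(\v^T)$ directly; passing through Jarník's inequality chain converts this one-line observation into the sharp bound, whereas your approach would in effect require reproving those inequalities from scratch. If you want a complete proof along your lines, you would have to carry out a significantly more delicate analysis in the spirit of Jarník's original arguments (or Roy's extremal-number computations), which the proposal does not attempt.
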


\begin{proof} We need to recall the notion of an \textsl{ordinary Diophantine exponent}, which is an asymptotic version of \equ{omegahat}; namely, $${\begin{aligned}{\omega}(A) &\df \sup
  %\begin{split}
  %  & \hat{\omega}(\mathbf{x}) \df \\ & \sup
  \left\{a> 0 \left| %\exists\, t_0 \ 
 \begin{aligned}
 % \forall
 \text{ for an unbounded set of }
  t > 0 \quad\\%>t_0 
  \ \exists \,\vq \in \Z^n\nz,\ \mathbf{p} \in \Z^m
  \qquad \\
   \text{
    such that }\equ{digeneral} \text{ holds for %$A = \mathbf{x}$ and 
    $f=\phi_a$}\end{aligned}\right.\right\}\\
    &= \inf
  %\begin{split}
  %  & \hat{\omega}(\mathbf{x}) \df \\ & \sup
  \left\{a> 0 : %\exists\, t_0 \ 
 % \forall
 \inf_{\q\in \mathbb{Z}^n\nz} 
 \|\q\|^a  \dist(A\q,\Z^m) 
>0  \right\}, 
\end{aligned}
% \end{split}
}$$
where the distance is computed using the supremum norm on $\R^m$.
%Since $\alpha$ is badly approximable, i.e. $$
%\inf_{q\in \mathbb{Z}\nz}
% q \dist(\alpha q, \Z)
%>0,$$ 
{It follows from \equ{ba}} that
$\omega(\alpha) \le 1$. Take an arbitrary $\beta\in\R$ {such that $1,\alpha,\beta$ are linearly independent over $\Q$}, and consider $\v =  \left(
\begin{array}{cc}
\alpha  \cr
\beta
\end{array}
\right)$. Clearly we have $$
\inf_{q\in \mathbb{Z}\nz}
 q \dist(\v q, \Z^2)
>0;$$ hence $\omega(\v) \le 1$ as well.

Now we will use the inequalities due to Jarn\'ik %\comm{(need a reference!)} 
relating ordinary and uniform exponents of $\v$ and $\v^T$:
\begin{equation}\label{j1}
\omega(\v) \ge \frac{\hat{\omega}(\v)^2}{1-\hat{\omega}(\v)}
\end{equation}
and
\begin{equation}\label{j2}
\hat{\omega}(\v) + \frac{1}{\hat{\omega}(\v^T)} = 1
\end{equation}
{(see \cite{Jarnik38} and \cite{Jarnik54} respectively)}.
It follows from (\ref{j1}) that 
$
\hat{\omega}(\v)\le \frac{\sqrt{5}-1}{2},
$
and from (\ref{j2}) we can obtain
$
\hat{\omega}(\v^T) \le  \frac{\sqrt{5}+3}{2} = \lambda.
$
Thus for any $A\in M_{2,2}(\R)$ with $(\alpha,\beta)$ as its row vector we have 
$
\hat{\omega}(A)\le \hat{\omega}(\v^T) \le \lambda.
$
\end{proof}

Our next result shows that such examples are impossible when $m=1$ and the dimension $d$ of a submanifold $Y$ of  $\R^n\cong M_{1,n}$ is at least $2$.  Further, we will exhibit vectors in $Y$ which are $f$-uniform of order $d-1$, as defined in \S\ref{mflds}.
%, where   $1\le g \le d-1$. %First let us show that the existence of $f$-uniform vectors on analytic submanifolds of $\R^n$ of dimension at least $2$ can be deduced from Theorem \ref{thm: special axiomatic theorem}:

  \begin{theorem} \label{cor: linear forms manifolds}
	Let $n\ge 2$, let $Y$ be a $d$-dimensional connected analytic submanifold of $\R^n$, where $d\ge 2$, and let $1\le g \le d-1$.  Suppose we are given an arbitrary non-increasing function
        $f: \R_{>0} \to \R_{>0}$  and a   countable  
        collection $\{R_\ell \}$   of proper {closed} analytic submanifolds of $Y$. Then the set
        \eq{complement}{Y\cap \UA_{1,n}(f;g)  \smallsetminus \bigcup_{\ell\in\N} R_\ell}
	%there exist 
	is uncountable and dense in $Y$. In particular, if one in addition assumes that $Y$ is %connected and 
	not contained in any proper rational affine subspace of $\R^n$, then the set
        $Y\cap \UA^*_{1,n}(f;g)
        % \smallsetminus \bigcup
       % _{\ell\in\N} R_\ell
       $ is uncountable and dense in $Y$.
\end{theorem}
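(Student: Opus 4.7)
The plan is to deduce the theorem from Theorem \ref{thm: special axiomatic theorem} applied with $k=1$, with $\varphi = \iota: Y \hookrightarrow \R^n$ the inclusion, and with a single Diophantine system $\mathcal{X}$ on $X = \R^n$ tailored to uniform approximation of order $g$. Specifically, take
$$\I \df \bigl\{(B, \vp) : B \in M_{g,n}(\Z) \text{ has linearly independent rows}, \ \vp \in \Z^g\bigr\},$$
with $h_{(B, \vp)} \df \max_i \|B_i\|$ (the $B_i$ being the rows of $B$) and $d_{(B, \vp)}(\x) \df \|B\x - \vp\|$. The zero set of $d_{(B, \vp)}$ is then precisely the codimension-$g$ rational affine subspace $A_{(B, \vp)} = \{\x : B\x = \vp\}$, and unpacking Definition \ref{orderg} shows that $\iota(\x)$ is $(\mathcal{X}, f)$-uniform if and only if $\x \in Y \cap \UA_{1,n}(f;g)$.

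For $\mathcal L$ I would take the countable family of $(d-g)$-dimensional connected components of the smooth loci of intersections $A \cap Y$, as $A$ ranges over codimension-$g$ rational affine subspaces of $\R^n$ with $Y \not\subset A$. Each element of $\mathcal L$ is by construction contained in the zero set of some $d_{(B, \vp)}$, giving alignment of $\mathcal L$ with $\mathcal X$ via $\iota$. For $\mathcal R$ I would take the given $\{R_\ell\}$; for the ``in particular'' strengthening I would enlarge $\mathcal R$ by adjoining the countable family of sets $A \cap Y$ where $A$ is a proper rational affine subspace of $\R^n$ with $Y \not\subset A$ (these are closed analytic subsets of $Y$). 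Since each $L \in \mathcal L$ is a connected analytic submanifold of $Y$ and each $R \in \mathcal R$ is a closed analytic subset of $Y$, the respect property follows directly from Lemma \ref{lem: crucial}.

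The main technical obstacle is to verify total density of $\mathcal L$ relative to $\mathcal R$. Density of $\bigcup_{L \in \mathcal L} L$ in $Y$ is a standard implicit-function-theorem argument: in a local parametrization $\Psi: V \to Y$ around a point $y_0$, for a generic $B \in M_{g,n}(\Z)$ the map $u \mapsto B\Psi(u)$ is a submersion at $u_0 \df \Psi^{-1}(y_0)$ (possible since $g \le d$), so approximating $B\Psi(u_0)$ by some $\vp \in \Q^g$ and applying the implicit function theorem yields a $(d-g)$-dimensional component of $A_{(B,\vp)} \cap Y$ through a point arbitrarily close to $y_0$. The harder half is condition \equ{vd2}: given $L_0 \in \mathcal L$, open $W$ with $L_0 \cap W \ne \varnothing$, and $R \in \mathcal R$, one must produce $L' \in \mathcal L$ with $L' \cap L_0 \cap W \ne \varnothing$ and $L' \not\subset R$. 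When $L_0 \not\subset R$ one takes $L' = L_0$. The delicate case is $L_0 \subset R$: here one exploits that every $y \in L_0$ automatically satisfies the $g$ integer linear relations defining the codimension-$g$ rational hull $A_0$ of $L_0$, so one may seek a new rational affine $A' \ne A_0$ sharing $g-1$ of its defining equations with $A_0$ and one fresh integer row $\mathbf{q}_g \in \Z^n$ (linearly independent of those of $A_0$'s defining matrix) such that $\mathbf{q}_g \cdot y_\ast \in \Z$ for some $y_\ast \in L_0 \cap W$. A Kronecker-type density argument along an analytic parametrization of $L_0$ produces such a pair $(\mathbf{q}_g, y_\ast)$, and a genericity argument — using $g \le d-1$ together with $\dim R < d$ — permits choosing $\mathbf{q}_g$ so that the resulting component $L'$ of $A' \cap Y$ through $y_\ast$ is transverse to $Y$ there and is not contained in $R$. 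Once total density is established, Theorem \ref{thm: special axiomatic theorem}(b) applies — each $L \in \mathcal L$ being positive-dimensional and connected, hence without isolated points — yielding the uncountable dense subset of $Y$ in \equ{complement}, with the augmentation of $\mathcal R$ providing the totally irrational conclusion.
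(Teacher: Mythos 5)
The paper proves this theorem by a \emph{local} argument that you do not attempt. Lemma \ref{localtd} works inside a single chart where $Y \cap W$ is the graph of an analytic $\psi: O \to \R^{n-d}$ over an open ball $O \subset \R^d$, and then only uses the ``vertical'' rational flats $L_M = M \times \R^{n-d}$ for $M$ a codimension-$g$ rational affine subspace of $\R^d$. Under the graph homeomorphism $\x \mapsto (\x,\psi(\x))$, total density of $\{L_M \cap W \cap Y\}$ is exactly total density of $\{M \cap O\}$ in $O$, which is the trivial base case ($g \le d-1$ rational flats in $\R^d$). Respect of closed analytic $R$'s comes for free from Lemma \ref{lem: crucial} because each $L_M \cap W \cap Y$ is a \emph{connected} analytic graph. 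Your proposal instead builds a single \emph{global} collection $\mathcal L$ of components of $A\cap Y$ over all codimension-$g$ rational $A\subset\R^n$ and tries to verify total density directly; this is a genuinely harder problem, and your sketch has a gap exactly where the hard work lies.

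The gap is in your treatment of condition \equ{vd2} when $L_0 \subset R$. You propose to keep $g-1$ of the defining equations of the rational hull $A_0$ of $L_0$, add a new integer row $\mathbf{q}_g$, and find $y_* \in L_0 \cap W$ with $\mathbf{q}_g\cdot y_* \in \Z$ via ``a Kronecker-type density argument.'' Two problems. First, the existence of such $y_*$ is a Diophantine constraint that couples $\mathbf{q}_g$ and $y_*$: the level set $\{\mathbf{q}_g\cdot y\in\Z\}\cap L_0\cap W$ is either empty or a proper analytic hypersurface in $L_0$ depending on $\mathbf{q}_g$, and even if you scale $\mathbf{q}_g$ to force a nonempty level set, the resulting $(\mathbf{q}_g, y_*)$ is very constrained, so you cannot afterward invoke ``genericity of $\mathbf{q}_g$'' as a free parameter. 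Second, the claim that the resulting component $L'$ of $A'\cap Y$ through $y_*$ is not contained in $R$ is precisely what must be proved and is not a soft genericity consequence: $\dim L' = d-g$ can be as small as $1$ while $\dim R$ can be $d-1$, so $L'\subset R$ is very much possible, and you would need to argue that for suitable admissible $\mathbf{q}_g$ (already constrained by the integrality requirement) the component escapes $R$. Your sketch names the obstruction without resolving it. The paper's localization entirely sidesteps this: inside a chart, total density of the slices $\{L_M \cap W \cap Y\}$ is inherited formally from total density of the flats $\{M\cap O\}$ in $\R^d$, and the ``avoid $R$'' half of \equ{vd2} needs no Diophantine manufacture of a new equation because one simply picks a different $M$ through the same base point.

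Two smaller remarks. Your choice of $\mathcal L$ as ``components of the smooth loci of $A\cap Y$'' is potentially problematic: these components can have dimension strictly less than $d-g$ (the intersection need not be transverse globally), so the alignment and dimension bookkeeping needs care; the paper avoids this by using vertical flats which are automatically transverse to the graph. Also, for the ``in particular'' part, the paper does not enlarge $\mathcal R$ by rational slices $A\cap Y$ as you do (that is a correct alternative); it invokes the set $\UA^*_{1,n}(f;g)$ directly, but your variant would also work if the main density argument were sound.
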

	 
%Note that the `in particular' part with $g=1$ is Theorem \ref{cor: linear forms manifolds simple}.
%\cite[Theorem 1.7]{KMW}; however we will present a different proof using Theorem \ref{thm: special axiomatic theorem}. 

To prove  Theorem \ref{cor: linear forms manifolds} we first need to describe the Diophantine system responsible for approximation of order $g$. Unwrapping Definition \ref{orderg}, we can easily see that  $\x\in\R^n$ is $f$-uniform of order $g$ if and only if it  is $(\mathcal{X},f)$-uniform,  where $\mathcal{X}$ is the 
Diophantine  {system} given by
 {\eq{systemorderg}{
 \begin{aligned}X = \R^n,\ \ &\I =  \Z^g \times \big((\Z^n)^g\smallsetminus\{\text{linearly dependent $g$-tuples}\}\big),\\ %R_{\vp,\vq} = \{A\in\mr: A\vq = \vp\},\\ &
&\qquad h_{p_1, \dots, p_g,\mathbf{q}_1, \ldots,
  \mathbf{q}_g} = \max_{i=1,\dots,g}\| \mathbf{q}_i \| ,\\ \ &d_{p_1, \dots, p_g,\mathbf{q}_1, \dots,
  \mathbf{q}_g}(\x) =  \max_{i=1,\dots,g} | \mathbf{q}_i \cdot \mathbf{x} -p_i| {.}\quad\end{aligned}}
Note that the zero locus of $d_{p_1, \dots, p_g,\mathbf{q}_1, \dots,
  \mathbf{q}_g}$ is precisely the intersection of $g$ rational affine hyperplanes $L_{p_i, \vq_i}$, which is a codimension $g$ rational affine subspace of $\R^n$; moreover, any rational affine subspace of $\R^n$ of codimension $g$ can be (in many different ways) written as such an intersection. Consequently, the collection $\mathcal L$ of all  rational affine subspaces of $\R^n$ of codimension $g$ is aligned with $\mathcal{X}$ as in \equ{systemorderg}. It is trivial to check that $\mathcal L$ is totally dense and respects any closed analytic submanifold of $\R^n$. Hence the case $Y = \R^n$ of Theorem \ref{cor: linear forms manifolds} easily follows from Theorem \ref{thm: special axiomatic theorem}. Let us now prove the general case where $Y$ is an arbitrary $d$-dimensional analytic submanifold of $\R^n$.
  For the proof  we will need the following

%In fact we need its local version, which is an easy corollary from the theorem:

%crucially uses the fact that resonant sets are  

%For the rest of this section let us assume that $m=1$. We are now going to state and prove a submanifold version of Theorem \ref{cor: weighted matrices}. Let us start with 
 	%Moreover, if $n > m$, the same can be done for $R_j$	being proper  of $\mr$.
	% many $x\in S \smallsetminus  \bigcup_{j} R_j$ such that $\varphi_k(x)$ is $(\mathcal{A}_k,H_k,\UA_k,f_k)$-singular $\forall\,k\in\N$.
	 %\comm{No restrictions on $H_k$, $\UA_k$ or $f_k$!}

%	\comm{I STOPPED HERE FOR THE TIME BEING. PLEASE READ UP TO THIS POINT AND TELL ME WHAT YOU THINK. In particular, do we really need Theorem \ref{thm: general axiomatic theorem}? or maybe Theorem \ref{thm: special axiomatic theorem} suffices for all our applications?}
	
%	\vfil\eject
	
\begin{lemma}\label{localtd} Let $Y$ be a $d$-dimensional $C^1$ {embedded} submanifold of $\R^n$, where $d\ge 2$, and let $1\le g \le d-1$.  Then for
any $\y \in Y$ there exists an open subset $W$ of $\R^n$ containing  $\y$ and a totally dense (in $Y\cap W$) collection $\mathcal{L}_{W}$ consisting of %proper closed subsets 
intersections of rational affine subspaces of $\R^n$ of codimension $g$ with $Y\cap W$.
% which is {} and aligned with $\mathcal{X}_{1,n}$.
\end{lemma}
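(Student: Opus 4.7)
The strategy is to locally straighten $Y$ by a coordinate projection and reduce to a classical density fact about rational affine subspaces of $\R^d$.

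Since the tangent space $T_\y Y$ has dimension $d$, there is a $d$-element subset $I \subseteq \{1,\dots,n\}$ such that the coordinate projection $\pi_I \colon \R^n \to V \df \R^I$ restricts to a linear isomorphism on $T_\y Y$. Applying the inverse function theorem to the $C^1$ map $\pi_I|_Y$, we may choose an open neighborhood $W \subset \R^n$ of $\y$ so that $\pi_I$ restricts to a diffeomorphism of $Y \cap W$ onto an open set $U \subset V$. Write $V^\perp \df \R^{I^c}$ for the complementary coordinate subspace, so that $\R^n = V \oplus V^\perp$. For any affine subspace $L_0 \subset V$, the map $\pi_I$ carries $(L_0 + V^\perp) \cap Y \cap W$ bijectively onto $L_0 \cap U$, and $L_0$ is a codimension-$g$ rational affine subspace of $V$ if and only if $L_0 + V^\perp$ is a codimension-$g$ rational affine subspace of $\R^n$ (its defining equations involve only the $I$-coordinates and have rational entries).

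Define
\begin{equation*}
\mathcal{L}_W \df \bigl\{(L_0 + V^\perp) \cap Y \cap W : L_0 \subset V \text{ is a codimension-}g \text{ rational affine subspace meeting } U\bigr\}.
\end{equation*}
Density of $\bigcup \mathcal{L}_W$ in $Y \cap W$ is equivalent via $\pi_I$ to density in $U$ of the union of all codimension-$g$ rational affine subspaces of $V$, which holds because any rational point of $V$ lies in such a subspace (fix a rational $(d-g)$-dimensional linear $M \subset V$, which exists since $d - g \ge 1$, and translate). To verify condition \equ{vd3}, take an open $W_0 \subset Y \cap W$ and $L = L_0 + V^\perp \in \mathcal{L}_W$ with $L \cap W_0 \ne \varnothing$, and set $U_0 \df \pi_I(W_0)$. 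Then $L_0 \cap U_0$ is a nonempty open subset of the rational affine space $L_0$, so it contains a rational point $p$. For every rational $(d-g)$-dimensional linear subspace $M \subset V$ the affine space $L_0' \df p + M$ is a codimension-$g$ rational affine subspace through $p$, so $(L_0' + V^\perp) \cap Y \cap W$ is an element of $\mathcal{L}_W$ intersecting $L \cap W_0$ at $\pi_I^{-1}(p)$. The union $\bigcup_M (p + M)$ contains $p + \Q^d$ (any rational vector lies in some rational $(d-g)$-dimensional linear subspace when $d - g \ge 1$), hence is dense in $V$; pulling back by $\pi_I$ shows that the closure of the corresponding union of elements of $\mathcal{L}_W$ contains a neighborhood of $\pi_I^{-1}(p)$ in $Y \cap W$, which suffices for \equ{vd3}.

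No step here is especially delicate. The hypothesis $1 \le g \le d - 1$ enters at two points: the lower bound $g \ge 1$ guarantees that the elements of $\mathcal{L}_W$ are \emph{proper} closed subsets of $Y \cap W$, and the upper bound $g \le d - 1$ (equivalently $d - g \ge 1$) ensures that rational $(d-g)$-dimensional subspaces $M$ of $V$ exist in enough abundance for the union of their translates through $p$ to be dense. The local straightening is a standard consequence of the implicit function theorem for $C^1$ submanifolds, so no real-analyticity is needed at this stage.
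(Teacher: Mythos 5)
Your proof is correct and takes essentially the same approach as the paper's: locally straighten $Y$ as a graph over a $d$-dimensional coordinate plane (the paper phrases this via the implicit function theorem applied to a parametrization $\Psi$, you via the inverse function theorem applied to the coordinate projection $\pi_I$, which is the same thing), define $\mathcal{L}_W$ as the pullbacks under this straightening of codimension-$g$ rational affine subspaces of $\R^d$, and reduce total density to that of codimension-$g$ rational affine subspaces in $\R^d$. The only difference is that the paper treats this last total density as evident, while you spell out its verification (rational point $p$ in $L_0 \cap U_0$, and the fan of rational $(d-g)$-dimensional subspaces through $p$ sweeping out a dense set); this is a correct and welcome elaboration, not a different method.
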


% Namely, let $Y$ be a $d$-dimensional $C^1$ submanifold of $\R^n$, where $d\ge 2$. 
\begin{proof} For
any $\y \in Y$ one can choose a neighborhood $V
\subset \R^n$ containing $\y$, an open set $U \subset \R^d$, 
and a differentiable {embedding} ${\Psi}: U \to \R^n$ such that
${V} \cap {Y} = {\Psi}({U})$. Let $\x\in U$ be such that ${\Psi}(\x) = \y$; then $d_{\x} {\Psi}$ has rank $d$, and by permuting coordinates in $\R^n$ without loss of generality we can assume that the leftmost $d\times d$ submatrix of $d_{\x} {\Psi}$ is non-singular. Then, in view of the implicit function theorem, we can choose an open  subset $W$ of $В$ containing $\y$ such that
   %\eq{niceform}
   $${
W\cap Y =\big \{\big(x_1, \dots,x_d, {\psi}(x_1, \dots,x_d)\big) : (x_1, \dots,x_d )\in
O\big\},
}$$
where $O$ is an  open ball in  $\R^d$ and $\psi:
O \to \R^{n-d}$ is a differentiable function. %\comm{As far as I understand every point here is regular since $Y$ is assumed to be a manifold, or am I missing something?} 

Now define
$$
\mathcal{L}_{W} \df \big \{L_M\, \cap\, W\,\cap\, Y : M \text{ is a rational affine subspace  of }\R^d\text{ of codimension }g
%\cap O\ne\varnothing
\big\},
$$
where  
\eq{lm}{
L_M \df  \big\{(x_1, \dots,x_n):  (x_1, \dots,x_d)\in M 
\big\}.
}
Equivalently,  $L_M \cap W\cap Y$ can be written as
 \eq{nicerform}{
%L_M \cap W\cap Y =
\big \{\big(x_1, \dots,x_d, \psi(x_1, \dots,x_d)\big) : (x_1, \dots,x_d )\in
M\cap O\big\}.
}
%Clearly  $
%\mathcal{L}_W$ is aligned with $\mathcal{X}_{1,n}$: indeed, each $L_M \cap Y$ is a subset of a rational affine hyperplane in $\R^n$). Moreover, s
Since $d\ge 2$ and $1\le g \le d-1$, the collection 
$$\big \{M \cap O : M \text{ is a rational affine  subspace  of }\R^d\text{ of codimension }g
%\cap O\ne\varnothing
\big\}
$$
is totally dense in $O$, which immediately implies that  $
\mathcal{L}_{W}$ is  totally dense in $Y\cap W$. %\comm{Do we need further explanations? Maybe a picture?}
\end{proof}

	\begin{proof}[Proof of Theorem \ref{cor: linear forms manifolds}]   
Our goal is to apply 	Theorem \ref{thm: special axiomatic theorem} locally in the neighborhood of any given point   $\y\in Y$. 
{Choose  an open subset $W$ of $\R^n$ containing  $\y$ and a collection $\mathcal{L}_{W}$ 	as in the above lemma.}	Since $L_M$ {as in \equ{lm}} is a codimension $g$ rational affine subspace of $\R^n$, it is clear that $\mathcal L$ is aligned with the Diophantine system \equ{systemorderg}. Now let us recall that $Y$ was assumed to be an analytic manifold; hence the map $\psi$ is real analytic, and therefore every $L_M \cap W\cap Y$ is a connected analytic submanifold of $W\cap Y$. If $R$ is a   {closed} analytic submanifold of $Y$ such that $L_M \cap W\cap  R$ has non-empty interior in $L_M \cap W$, it follows that $L_M\subset W\cap  R$, which, in view of Lemma \ref{lem: crucial}, implies that
$L_M \subset R$. Hence $
\mathcal{L}_W$ respects $R$, and we can apply Theorem \ref{thm: special axiomatic theorem}(b) and conclude that %$W\cap Y\cap \UA^*_{1,n}(f)
        % \smallsetminus \bigcup
       % _{\ell\in\N} R_\ell
       the intersection of the set \equ{complement} with 
       $W$ is uncountable, finishing the proof of the theorem. % \comm{Please check that the argument is correct!}
       \end{proof}

          Let us now show that the same method can produce uniformly approximable vectors on certain fractals. Here is an example from \cite{KMW}:
    
    \begin{theorem}\label{thm: fractals}
Let $n \geq 2$ and let $Y_1, \ldots, Y_n$ be perfect subsets
of $\R$ such that 
\eq{density}{\Q \cap Y_k \text{ 
is dense in }Y_k\text{  for each }k \in \{1,2\}.} Let $Y = \prod_{j=1}^n Y_j$. Suppose we are given an arbitrary non-increasing function
        $f: \R_{>0} \to \R_{>0}$  and a   countable  
        collection $\{R_\ell \}$   of proper {closed} analytic submanifolds of $\R^n$. Then %for any $1\le g \le n-1$ the set
        $$Y\cap \UA_{1,n}(f%;g
        )  \smallsetminus \bigcup_{\ell\in\N} R_\ell$$	 
	%there exist 
	is uncountable and dense in $Y$. In particular, %if one in addition assumes that $Y$ is not contained in any proper rational affine subspace of $\R^n$, then the set
        $Y\cap \UA^*_{1,n}(f%;g
        )
        % \smallsetminus \bigcup
       % _{\ell\in\N} R_\ell
       $ is uncountable and dense in $Y$. 
\end{theorem}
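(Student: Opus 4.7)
The plan is to apply Theorem \ref{thm: special axiomatic theorem} with $Y=\prod_{j=1}^n Y_j$, every $\varphi_k$ the inclusion $Y\hookrightarrow\R^n$, and the standard Diophantine system $\mathcal{X}_{1,n}$ of \equ{classical}. For the aligned collection $\mathcal{L}$ I would take the two families of coordinate slices
\[
L^{(1)}_r \df \{r\}\times Y_2\times\cdots\times Y_n,\ r\in\Q\cap Y_1,\qquad
L^{(2)}_r \df Y_1\times\{r\}\times Y_3\times\cdots\times Y_n,\ r\in\Q\cap Y_2;
\]
each is closed in $Y$, is contained in the rational affine hyperplane $\{x_k=r\}\subset\R^n$ (and thus in the zero locus of some $d_{p,\mathbf{q}}$, giving alignment), and has no isolated points since every $Y_j$ is perfect. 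For $\mathcal{R}$ I take $\{R_\ell\cap Y\}_\ell$, augmented for the ``in particular'' clause by the countable family of intersections of $Y$ with all proper rational affine subspaces of $\R^n$ (whose complement in $Y$ is exactly the set of totally irrational points). The density of $\bigcup_r L^{(1)}_r$ in $Y$ is immediate from the density of $\Q\cap Y_1$ in $Y_1$, so only the respect property and the relative total density condition \equ{vd2} remain to be checked.

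Both rest on the following lemma, which I would prove by induction on $n$: \emph{if $V_1,\ldots,V_n\subset\R$ are uncountable and $P$ is a non-zero real analytic function on an open neighborhood $U\subset\R^n$ of some point of $V_1\times\cdots\times V_n$, then $P$ does not vanish on $(V_1\times\cdots\times V_n)\cap U$}. The case $n=1$ follows from the fact that the zero set of a non-zero analytic function of one variable is discrete, hence countable. For the inductive step one works in a small closed box $I_1\times\cdots\times I_n\subset U$ around the base point so that each $V_j\cap I_j$ is uncountable; the inductive hypothesis applied to the slice $P(\cdot,y_n)$ for each $y_n\in V_n\cap I_n$ forces $P(\cdot,y_n)\equiv 0$ on $I_1\times\cdots\times I_{n-1}$, and then the one-variable case applied in the last coordinate for each fixed $(y_1,\ldots,y_{n-1})$ forces $P\equiv 0$ on the whole box. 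The respect property for $L=L^{(1)}_r$ now follows at once: if $L\cap R$ had non-empty interior in $L$, it would contain a set $\{r\}\times V_2'\times\cdots\times V_n'$ with each $V_j'$ a non-empty relatively open subset of $Y_j$, hence uncountable; the local analytic equations defining $R$ would vanish on this product, so by the lemma they vanish on a neighborhood in the hyperplane $\{x_1=r\}$, and by Lemma \ref{lem: crucial} the whole hyperplane would lie in $R$, giving $L\subset R$ (symmetrically for $L^{(2)}_r$).

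To verify \equ{vd2} for $L=L^{(1)}_{r_1}$ with $L\cap W\neq\varnothing$ and $R\in\mathcal{R}$, take $L'=L$ if $L\not\subset R$; otherwise pick $(r_1,y_2,\ldots,y_n)\in L\cap W$ and, by density of $\Q\cap Y_2$ in $Y_2$, choose $r_2\in\Q\cap Y_2$ near $y_2$ so that $(r_1,r_2,y_3,\ldots,y_n)\in W$ and hence $L^{(2)}_{r_2}\cap L\cap W\neq\varnothing$. If no such $r_2$ avoided $R$, a dense set of small $r_2$'s in some open piece of $Y_2$ would satisfy $L^{(2)}_{r_2}\subset R$, and closedness of $R$ would force it to contain an open subset $Y_1\times V_2\times Y_3\times\cdots\times Y_n$ of $Y$; the lemma applied now in all $n$ variables would then contradict properness of $R$. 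The hypotheses of Theorem \ref{thm: special axiomatic theorem} being satisfied, parts (a) and (b) deliver the required dense uncountable subset of $Y\cap\UA_{1,n}(f)\sm\bigcup_\ell R_\ell$, with the totally irrational refinement following from the augmentation of $\mathcal{R}$ described above. The main obstacle is the lemma: since the slices in $\mathcal{L}$ are fractal rather than analytic, Lemma \ref{lem: crucial} is not directly available, and one must combine the rigidity of real analytic functions with the mere uncountability of relatively open subsets of the $Y_j$'s.
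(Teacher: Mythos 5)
Your argument follows essentially the same route as the paper's: you use coordinate slices of $Y$ along the first two directions as the totally dense aligned collection, and you establish the respect property by exploiting the rigidity of real analytic functions vanishing on products of (locally uncountable) subsets of $\R$ — your inductive lemma is a formalization of what the paper states more loosely, namely that values on such a product determine all partial derivatives of an analytic function and hence the function itself on a neighborhood of the hyperplane. One caveat: the lemma as you state it, with each $V_j$ merely \emph{uncountable}, is false — take $n=1$, $V_1=\{0\}\cup[1,2]$, $P(x)=x$, and $U$ a small neighborhood of the isolated point $0$; then $P$ vanishes on $V_1\cap U=\{0\}$ yet $P\not\equiv 0$. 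The inductive step implicitly requires that one can shrink to a box $I_j$ in which each $V_j\cap I_j$ is still uncountable, i.e.\ that the base point is a condensation point of every $V_j$. This is exactly what the application supplies (your $V_j'$ are non-empty relatively open subsets of perfect sets, and every point of a perfect set is a condensation point), so the argument is sound once the hypothesis of the lemma is tightened — say to ``$V_j$ such that every point is a condensation point'' or simply ``$V_j$ perfect and the base point an interior point of $V_1\times\cdots\times V_n$ relative to the product''.
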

      
% \comm{I stated the result for order 1 approximanion, and perhaps the same can be done for order $g$ with subspaces of codimension $g$, but I am not sure it is would be comprehensible enough. Maybe for fractals we can simply do $g=1$ to make it simpler, and say that similarly one can prove the result for arbitrary $g$?}
 
\begin{proof} 
Let $\mathbf{e}_1, \ldots, \mathbf{e}_n$ be the standard basis
vectors in $\R^n$, and let
$\{A_i\} $ be the collection of all rational affine %subspaces 
hyperplanes of $\R^n$ %of codimension $g$ 
which are normal
to one of $\mathbf{e}_1 , \mathbf{e}_2$ and have nontrivial
intersection with $Y$; that is, each of the %hyperplanes
$A_i$ is of the form
%\eq{eq: for fractals}
$${
A_i = \left\{ \x \in \R^{n} : x_{k_i} = r_i \right\},
\text{ where } r_i \in \Q \text{ and } k_i \in \{1,2\};}$$
note that necessarily we have $r_i\in {Y_{k_i}}$.
We claim that the collection  $\{ Y \cap A_i\}$, which is obviously aligned with $\mathcal{X}_{1,n}$, is totally dense.
% in $Y$. 
Indeed, \equ{vd1} clearly follows from \equ{density}. To prove \equ{vd3}, take an open subset $W$ of $\R^n$ of the form $I_1\times\cdots\times I_n$, where $I_j$ are open intervals in $\R$, and suppose that \eq{a}{A = \left\{ \x \in \R^{n} : x_1 = r \right\}} intersects with $Y$ non-trivially, that is, we have $r\in Y_1$. 
Then, again in view of \equ{density}, the union of subspaces $A_j$ such that $A_j\cap A \cap Y\cap W \ne\varnothing$ is dense in $(I_1\cap Y_1)\times\R\times \cdots\times \R$, hence the union of corresponding intersections $Y \cap A_j$ is dense in $Y\cap(I_1\times\R\times \cdots\times \R)$.

It remains to prove that the collection $\{ Y \cap A_i\}$ respects  an arbitrary closed analytic submanifold $R$ of $\R^n$. Suppose that, for $A$ as in  \equ{a},  {the intersection} $Y\cap A \cap R$ has a non-empty interior in  $Y\cap A$. 
%Here we want to use the fact that Y has a product structure. So if $A \cap Y$ has an interior which is not empty, then in a point in this set
 {Take a point $\x = (x_1,\dots,x_n)$ in the interior of $Y\cap A \cap R$; then there exists a neighborhood $W$ of $\x$ in $\R^n$ such that $Y\cap A \cap R$ contains $\big(\{x_1\} \times Y_2 \times ... \times Y_n\big)\cap W$.
% or $Y_1 \times {x_2} \times ... \times Y_n$. 
Since each of the sets $Y_i$ is perfect,  there are sequences in $Y\cap A \cap R$ approaching $\x$ from each coordinate direction. This makes it possible to determine all partial derivatives of all orders of any analytic function on $A$  at  $(x_1,\dots,x_n)$  by its values on $Y\cap A \cap R$. In particular, since $R$ is a closed analytic manifold, it has to contain $A$, hence it is respected by $Y\cap A$.}

%Since the latter is a perfect subset of $A$, it follows that $A\subset R$ %\comm{(do we need more explanations?)}. %Hence $R$ is respected by the collection $\{ Y \cap A_i\}$, which, i
{Now we can conclude that}  Theorem \ref{thm: special axiomatic theorem} applies and implies the density of $Y\cap \UA_{1,n}(f)  \smallsetminus \bigcup_{\ell\in\N} R_\ell$   for any countable set of proper closed analytic submanifolds $R_\ell$ of $\R^n$. The uncountability  follows as well, since the sets $\{ Y \cap A_i\}$ have no isolated points.  %\comm{Please check that the argument is correct!}
\end{proof}

 \begin{remark}\label{Koch} \rm
     Our arguments can be adapted to many other fractal sets. We sketch
     one more example, involving a `rational Koch snowflake' which can
     be treated by our method. To define it, let $\alpha, \beta, \gamma, \delta$ be
     positive rational numbers, where $\alpha < \beta < \delta < 1$,
     and let  $Y$ be the attractor of the iterated function system
  {$\{\varphi_1,\varphi_2, \varphi_3, \varphi_4\}$, where $\varphi_i : \R^2 \to \R^2$} is the unique 
  orientation preserving contracting similarity map sending the
 points {
$\x_0 = (0,0)$ and $\y_0 =
  (1,0)$}, to the 
  points {$\x_i, \y_i$}  defined by
  {\[\begin{split}
       \x_1 = (0,0),  \ \ \ &\y_1 = ( \alpha, 0)  \\
        \x_2 = \y_1,  \ \ \ \ \ \ &\y_2 = ( \beta, \gamma)  \\
       \x_3 = \y_2,   \ \ \ \  \ \ &\y_3 = (\delta, 0)  \\
       \x_4 = \y_3, \ \ \ \  \ \ &\y_4 = ( 1, 0) .
    \end{split}\]}
%\comm{It would be great to have a picture here...} 
Then $Y$ is a topological curve  (continuous
image of a segment) of Hausdorff dimension greater than one. The usual
version of the Koch snowflake  has a similar description, with {$\alpha = \frac13,\,
\beta =  \frac12,\, \gamma =  \frac{\sqrt{3}}{6}, \delta =  \frac23$ (see 
{\cite%{Koch wikipedia}
[Figure 0.2]{Falconer}})}.

For our application, it is important to note that
if $\theta$ is the slope of the line from {$\x_2$ to $\y_2$}, then $Y$ 
contains a dense collection of rational points $Y_0$, and the horizontal
lines and lines of slope $\theta$ through points of $Y_0$ intersect $Y$
in an uncountable perfect set. This property makes it possible to
repeat the proof of Theorem \ref{thm: k singular stronger fractals}, taking the {sets} $\{A_i\}$ to be the
lines {having} these two slopes and passing through $Y_0$. 
\end{remark}

\ignore{{Finally let us  remark  that, again in view of  Remark \ref{doesnotmatter}(2), with no extra work  the statements proved in this section can be generalized to the set-up of weighted approximation. {For example {as follows}:}
 \begin{theorem} \label{cor: weighted vectors manifolds}
	Let $n\ge 2$, and suppose for any $k\in\N$ we are given a
        non-increasing function   $f_k: \R_{>0} \to \R_{>0}$  and a
        %AFW  
        subset $\mathcal{W}_k$ of $\R_{>0}^{n+1}$ {satisfying \equ{afw}}. Let $Y$ be as in Theorems \ref{cor: linear forms manifolds} or \ref{thm: fractals}. Then  for any  countable  
        collection $\{R_\ell : \ell \in \N\}$ of  {closed} analytic
        submanifolds of $\R^n$ not containing $Y$, the set
        $$\Big(Y\cap\bigcap_k \UA_{1,n}(f_k, \mathcal{W}_k)\Big)
        \smallsetminus \bigcup _{\ell\in\N} R_\ell$$ 	
	%there exist 
	is uncountable and dense in $Y$. In particular, if one in addition assumes that $Y$ is not contained in any proper rational affine subspace of $\R^n$, then the set
        $$Y\cap\bigcap_k \UA^*_{1,n}(f_k, \mathcal{W}_k)
        % \smallsetminus \bigcup
       % _{\ell\in\N} R_\ell
       $$ is uncountable and dense in $Y$.  
      	%Furthermore, if $n > m$, the same can be done for $R_j$	being proper algebraic  subvarieties of $\mr$.
	% many $x\in S \smallsetminus  \bigcup_{j} R_j$ such that $\varphi_k(x)$ is $(\mathcal{A}_k,H_k,\UA_k,f_k)$-singular $\forall\,k\in\N$.
	 %\comm{No restrictions on $H_k$, $\UA_k$ or $f_k$!}
	\end{theorem}}
	 \comm{Given that the weighted approximation is not such a big deal, maybe we can simply remove this statement?}}}

\ignore{Note that when $k=1$, $k$-singularity is the same as singularity and
the existence of singular vectors in certain manifolds and fractals is
one of the main results of \cite{KMW}. The conditions on the manifolds
and fractals appearing in the present paper are more restrictive than the
conditions which appeared in \cite{KMW}; see \S \ref{} for the precise
conditions. }

\section{{Vectors that are  $k$-singular for all $k$}}\label{sec: k singular}
In this section we prove {a strengthening of} Theorem \ref{thm: k singular all k}. 
%{In fact we prove 
%a much stronger result. 
To state it, let us consider a definition generalizing the notion of $k$-singular vectors. Namely, for a   non-increasing $f:\R_{>0} \to \R_{>0}$ say that $\mathbf{x}\in\R^n$ is \textsl{$f$-uniform of degree $k$} %(notation: $\mathbf{a}\in D^{(k)}_{1,n}(f)$, 
 if for any sufficiently large $t$ there exists a
polynomial $P \in \Z[X_1, \ldots, X_n]$ such that $\deg (P) \le k, \
H(P) \leq t,$ and $|P(\mathbf{x})| \leq f(t)$. Clearly $\x$, {viewed as a row vector,} is $k$-singular {(as defined in \S\ref{ksing})} if and only if it is  $\varepsilon\phi_{N_k}$-uniform of degree $k$, where $N_k$ is as in  \eqref{eq: def n}.
Also it is clear that $k$-algebraic vectors ({again see \S\ref{ksing} for a definition}) are $f$-uniform of degree $k$ for any positive $f$.}

%the
%following result, which is clearly stronger.
%We will say that an
%affine subspace $V \subset \R^d$ is {\em rational} if it is the smallest affine subspace
%containing a finite set of rational points; equivalently, if $V \cap
%\Q^d$ is dense in $V$. An affine subspace is {\em totally irrational} if it
%is not contained in any proper rational affine subspace of $\R^d$. A
{Let us} say that a submanifold  ${Y} \subset {\R^n}$ is %called
{\em transcendental} if ${Y}$ is not 
contained in an algebraic  subvariety defined over $\Q$; more concretely, if there
is no nonzero polynomial in {$\Z[X_1, \ldots, X_n]$} which vanishes on
${Y}$.
%A
%subset of $\R^d$ is {\em perfect} if it 
%closed and has
%no isolated points. 

\begin{theorem}\label{thm: k singular stronger}
  Let {$n \geq 2$, let  $Y  $} be a
  transcendental 
{analytic 
 submanifold of $\R^n$} {of dimension  $d\ge 2$. Suppose that for each $k\in\N$ we are given a non-increasing  $f_k:\R_{>0} \to \R_{>0}$.  Then there is a dense  uncountable set of $\mathbf{x} \in {Y}$
which are not algebraic and are $f_k$-uniform of degree $k$ for all $k \in \N$. In particular,  $Y$ contains a dense uncountable set of}
$\mathbf{x}
 $ which are not algebraic and are 
  $k$-singular for all $k \in \N$. 
  \end{theorem}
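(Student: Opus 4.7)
The plan is to apply Theorem \ref{thm: special axiomatic theorem} in the same spirit as the proof of Theorem \ref{cor: linear forms manifolds}, but with a Diophantine system tailored to polynomial approximation. For each $k\in\N$ I would take $X_k \df \R^n$, let $\varphi_k:Y\hookrightarrow\R^n$ be the inclusion, take the index set $\I_k \df \Z[X_1,\ldots,X_n]_{\leq k}\smallsetminus\{0\}$, and set $h_{k,P}\df H(P)$, $d_{k,P}(\mathbf{x})\df |P(\mathbf{x})|$, thereby defining a Diophantine system $\mathcal{X}_k = (X_k,\{d_{k,P}\},\{h_{k,P}\})$. Unwinding Definition \ref{DirAbstract}, $\mathbf{x}$ is $(\mathcal{X}_k,f_k)$-uniform precisely when it is $f_k$-uniform of degree $k$ in the sense introduced at the start of this section, so $\bigcap_k \varphi_k^{-1}\!\big(\UA_{\mathcal{X}_k}(f_k)\big)$ is exactly the set of $\mathbf{x}\in Y$ that are $f_k$-uniform of degree $k$ for every $k$.

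Next I would argue locally. Fix $\mathbf{y}\in Y$ and invoke Lemma \ref{localtd} with $g=1$ to obtain a neighborhood $W\ni\mathbf{y}$ in $\R^n$ on which $W\cap Y$ is realized as the graph of an analytic map $\psi$ over an open ball $O\subset\R^d$ (this step uses $d\ge 2$), together with the totally dense collection $\mathcal{L}_W \df \{L_M\cap W\cap Y : M \text{ a rational affine hyperplane in }\R^d\}$ of subsets of $W\cap Y$, where $L_M$ is as in \eqref{lm}. Since each $L_M$ is itself a rational affine hyperplane of $\R^n$, it is the zero set of a nonzero degree-$1$ polynomial $P_M\in\Z[X_1,\ldots,X_n]$; because $1\le k$ for every $k\in\N$, the polynomial $P_M$ lies in $\I_k$ and vanishes on $L_M\cap W\cap Y$, so $\mathcal{L}_W$ is aligned with $\mathcal{X}_k$ via $\varphi_k$ for every $k$.

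For the collection $\mathcal{R}$ to be avoided I would take all nonempty sets of the form $(W\cap Y)\cap\{P=0\}$ as $P$ ranges over $\Z[X_1,\ldots,X_n]\smallsetminus\{0\}$; this collection is countable, each member is a proper closed subset of $W\cap Y$ by transcendence of $Y$, and avoiding their union is precisely avoiding algebraic vectors. The only genuine point to verify is that $\mathcal{L}_W$ respects every $R\in\mathcal{R}$: each $L\in\mathcal{L}_W$ is a connected real analytic submanifold of $W\cap Y$, being the graph of $\psi$ over the convex, hence connected, set $M\cap O$; and $R$ is the zero set of the analytic function $P$ restricted to $W\cap Y$. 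If $L\cap R$ has nonempty interior in $L$, then $P|_L$ vanishes on an open subset of the connected real analytic manifold $L$, and the identity principle for real analytic functions forces $P|_L\equiv 0$, i.e.\ $L\subset R$. Theorem \ref{thm: special axiomatic theorem}(b) then furnishes an uncountable dense subset of $W\cap Y$ consisting of non-algebraic $\mathbf{x}$ that are $f_k$-uniform of degree $k$ for every $k$; letting $\mathbf{y}$ range over $Y$ yields density in $Y$. The $k$-singularity statement follows at once by specializing to, e.g., $f_k(t)\df t^{-N(k,n)}/\log(t+2)$, so that $t^{N(k,n)}f_k(t)\to 0$ and any $f_k$-uniform vector of degree $k$ is automatically $k$-singular. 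I anticipate no serious obstacle; the only slightly subtle ingredient is the respect property, which is a direct application of real analytic continuation.
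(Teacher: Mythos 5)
Your proof is correct and reaches the same conclusion, but the setup of the Diophantine system is genuinely different from the paper's. The paper factors through the Veronese-type map $\varphi_k:\R^n\to\R^{N(k,n)}$ (listing the non-constant monomials) and then invokes the \emph{standard} Diophantine system $\mathcal{X}_{1,N(k,n)}$ on $\R^{N(k,n)}$, whereas you build a Diophantine system directly on $\R^n$: index set $\I_k=\Z[X_1,\dots,X_n]_{\le k}\smallsetminus\{0\}$, generalized distances $d_{k,P}(\x)=|P(\x)|$, heights $H(P)$, and $\varphi_k$ the inclusion. Both routes then rely on the very same Lemma~\ref{localtd} (with $g=1$) for total density of the slices $L_M\cap W\cap Y$, and on the same analytic-continuation argument for the respect property relative to the algebraic sets $\{P=0\}$.

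What each buys: the paper's reduction lets it re-use the standard system $\mathcal{X}_{1,n'}$ already set up in \equ{classical} and the explicit geometry of rational hyperplanes in $\R^{N(k,n)}$; your direct system is more economical and, importantly, makes the equivalence ``$\x$ is $f_k$-uniform of degree $k$ $\Leftrightarrow$ $\x\in\UA_{\mathcal{X}_k}(f_k)$'' literally tautological, whereas the paper's asserted equivalence $\x$ ``$f_k$-uniform of degree $k$ iff $\varphi_k(\x)\in\UA_{1,N(k,n)}(f_k)$'' involves a small mismatch between $H(P)=\max(|p_0|,\|\vq\|)$ and the standard height $\|\vq\|$, which you avoid entirely. (Of course, by Remark~\ref{doesnotmatter}(2), only the zero sets of the $d_{k,s}$ matter, and the rational hyperplanes $\pi^{-1}(L_M)$ in $\R^{N(k,n)}$ are exactly the zero sets of your $d_{k,P_M}$, so the two systems are aligned in the same sets and the abstract theorem applies identically.)

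One small point you should flag: your claim that each $(W\cap Y)\cap\{P=0\}$ is a \emph{proper} subset of $W\cap Y$ uses analytic continuation from $W\cap Y$ to all of $Y$ and hence implicitly uses connectedness of $Y$ (or at least that the component of $Y$ meeting $W$ is transcendental). The paper's own proof has the identical implicit assumption, so this is not a defect of your argument relative to theirs, but it is worth making explicit. Also, you use an index set $\I_k$ that depends on $k$, while the statement of Theorem~\ref{thm: special axiomatic theorem} nominally fixes one $\I$; this is harmless (enumerate each $\I_k$ by $\N$, or take the union $\Z[X_1,\dots,X_n]\smallsetminus\{0\}$ and assign degree-$>k$ polynomials a constant positive $d_{k,P}$) but again should be noted. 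Your deduction of the ``in particular'' $k$-singular statement via $f_k(t)=t^{-N(k,n)}/\log(t+2)$ is fine.
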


This result extends   {\cite[Theorem 1.7]{KMW}; namely, %in that paper 
there} it was shown
that %if $Y = C_1 \times C_2$ as in Case \ref{item: stronger 3}, then
$Y$  {as above}  contains uncountably many singular vectors. 

\smallskip
{The proof is based on the following observation. {For
$\x \in \R^n$, denote by $\varphi_k(\x)$ the vector  with coordinates consisting of all non-constant monomials of degree at most $k$ in $n$ variables  $x_1,\dots,x_n$, viewed as a row vector in $\R^{N(k,n)}$, and for each $k$ consider the standard Diophantine system {$\mathcal{X}_{1,N(k,n)}$} on $X_k = \R^{N(k,n)}\cong M_{1,N(k,n)}$ with distance functions from rational affine hyperplanes and standard heights. 
Then it is clear that $\mathbf{x} \in \R^n$ is $f_k$-uniform of degree $k$ if and only if $\varphi_k(\mathbf{x}) \in \UA_{1,N(k,n)}(f_k)$.} Furthermore we have the following}

\ignore{\begin{lemma}\label{lem: auxiliary} Let $L$ be {a  subset of a rational affine hyperplane of} $\R^n$. Then for all $k \in \N$, $\varphi_k(L)$ is contained in a rational affine hyperplane in $\R^{N(k,n)}.$
\end{lemma}

\begin{proof}%[Proof of Lemma \ref{lem: auxiliary}] 
Because $L$ is contained in a rational affine hyperplane, one of the variables can be solved for in terms of the others in a linear equation with rational coefficients. For simplicity in notation, we assume this is $x_n$. That is, we assume that there exist rational numbers $c_0,\ldots,c_n$ so that for all $(x_1,\ldots,x_n) \in L$, \[x_n = c_1x_1+\cdots+c_{n-1}x_{n-1} + c_0.\] 
	
	To prove that $\varphi_k(L)$ is contained in a rational affine hyperplane in $\R^{N(k,n)}$, we need to show that there exists $\r \in \Q^{N(k,n)}$ so that \[\langle \r, \varphi_k(x_1,\ldots,x_{n-1},c_1x_1+\cdots+c_{n-1}x_{n-1} + c_0)\rangle \in \Q,\] where $\langle\cdot,\cdot\rangle$ denotes the usual inner product on {$\R^{N(k,n)}$}.
	
	By combining like terms, we see that for any $\r$,  $$\langle \r, \varphi_k(x_1,\ldots,x_{n-1},c_1x_1+\cdots+c_{n-1}x_{n-1} + c_0)\rangle$$ consists of monomials of degree at most $k$ in $d-1$ variables with rational coefficients. We seek rational numbers so that every non-constant monomial term in this expression has coefficient zero; the constant term is rational, and hence can be ignored. More precisely, this means that we seek a solution over $\Q$ to a system of $N(k,n-1)$ linear equations with $N(k,n)$ variables. Since $N(k,n)>N(k,n-1)$, a rational solution will always exist.
 \end{proof}}

  \begin{proof}  [Proof of Theorem \ref{thm: k singular stronger}]
Take $Y$ as in the statement of Theorem \ref{thm: k singular stronger} and 
 %endow it with the   the collection $\{L_i\}$ as in Proposition \ref{prop: TD manifolds}, that is,  $L_i$ are %irreducible 
%all the $(d-1)$-dimensional components of intersections of  rational affine hyperplanes of $\R^n$ with $Y$.
% such that $\dim L_i = \dim(Y)-1$. 
argue as in {the proof of} Theorem~\ref{cor: linear forms manifolds}. {Namely, for any  $\y\in Y$ define 
\begin{itemize}
\item[$\bullet$] a neighborhood $W$ of $\y$ such that $Y\cap W$ is a graph of an analytic function $\R^d\to \R^{n-d}$, and
\item[$\bullet$] (using  Lemma \ref{localtd} with $g=1$) a  totally dense  collection $\mathcal{L}_W$ of intersections of  some rational affine hyperplanes of $\R^n$ with $Y\cap W$.
\end{itemize}}
%Proposition \ref{prop: TD manifolds} asserts that this collection is TD. 
%Furthermore we have the following

%{{Lemma \ref{lem: auxiliary}} implies 
{It is clear} that $\mathcal{L}_W$ {as above} is aligned with $\mathcal{X}_{1,N(k,n)}$ via $\varphi_k$: {indeed, if 
$\pi: \R^{N(k,n)} \to \R^n$ denotes the projection onto the first $n$ coordinates and $L$ is a rational affine hyperplane in $\R^n$,  then $\varphi_k(L)$ is contained in  $\pi^{-1}(L)$, which  is a rational affine hyperplane in $ \R^{N(k,n)}$.
%condition {\rm (a)} of Theorem \ref{thm: general axiomatic theorem} is satisfied for the family of maps $\varphi_k$ from $Y$ into $\R^{N(k,n)}$.  
Now} let  $\{R_\ell\}$ be the collection of %closed semianalytic 
sets of the form 
$\{\y \in Y: P(\y) =0\}$, where $P$ ranges over all nonzero
polynomials in $\Z[X_1, \ldots, X_n]$. 
Since $Y$ is assumed to be transcendental, it follows that each of the sets $R_\ell$ is a proper subset of $Y$. It is easy to  show that $\mathcal{L}_W$ respects $R_\ell$: indeed, any element  $L$ of  $\mathcal{L}_W$ is of the form \equ{nicerform} {with analytic ${\psi}$; furthermore,} 
%and
 the fact that $L\cap R_\ell$ has non-empty interior in $L$ implies that 
$P\big(x_1, \dots,x_d, \psi(x_1, \dots,x_d)\big) = 0$ on an open subset of an affine hyperplane $M$ of $\R^d$. Since $d\ge 2$ and ${\psi}$ is analytic, it follows that $L\subset R_\ell$. %\comm{Is this a reasonable explanation?} 
Hence  Theorem \ref{thm: special axiomatic theorem} applies, and we conclude that  the set  $${\bigcap_{k} \varphi_k^{-1}\big(\UA_{1,N(k,n)}(f_k)\big)
        \smallsetminus  \bigcup_{{{\ell}}} R_{{\ell}}},$$
   % \eq{set}{%Y_{\mathrm{sing}} \df
	%\left\{x\in Y :
        %  \forall\,k\in\N, \ \varphi_k(x)\text{ is $(\mathcal{X}_k,f_k)$-uniform }\right\}
     %\begin{equation*}  
  %   (1.7)\qquad \qquad \qquad \qquad  \bigcap_{k} \varphi_k^{-1}\big(\UA_{\mathcal{X}_k}(f_k)\big)
   %     \smallsetminus  \bigcup_{j} R_j \qquad \qquad \qquad \qquad \qquad  
%	\end{equation*}
	%there exist 
	which contains the set of vectors  that are not algebraic and are $f_k$-uniform of degree $k$ for all $k \in \N$, is uncountable and dense in $Y$ for any choice of non-increasing 
     functions $f_k: \R_{>0} \to \R_{>0}$.
     \end{proof}

%\comm{As you can see, the proof is very short, most hard work has been done in the previous section. As for fractals, I kept the next theorem for which we we have to use the most abstract result, but I have some doubts. Hope we'll discuss it soon.}
 
Using similar ideas and arguing as in the proof of Theorem \ref{thm: fractals} and Remark \ref{Koch}, one can establish a  similar result for fractals: 
\begin{theorem}\label{thm: k singular stronger fractals}
Let $Y$ be either the product of 
%$n \geq 2$ and let $Y_1, \ldots, Y_n$ be 
perfect subsets $Y_1, \ldots, Y_n$
of $\R$ such that 
\equ{density} holds ($n\ge 2$), or a rational Koch snowflake ($n=2$). 
Then the conclusions of Theorem \ref{thm: k singular stronger} hold for $Y$.
	 \end{theorem}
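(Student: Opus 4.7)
The plan is to combine the strategies used in the proofs of Theorem~\ref{thm: k singular stronger} and Theorem~\ref{thm: fractals} in a single application of Theorem~\ref{thm: special axiomatic theorem}. For each $k \in \N$, I will use the map $\varphi_k : \R^n \to \R^{N(k,n)}$ sending $\x$ to the tuple of non-constant monomials of degree at most $k$, and equip $\R^{N(k,n)}$ with the standard Diophantine system $\mathcal{X}_{1,N(k,n)}$. As already observed in the proof of Theorem~\ref{thm: k singular stronger}, for any rational affine hyperplane $L \subset \R^n$ the image $\varphi_k(L)$ lies in a rational affine hyperplane of $\R^{N(k,n)}$, so any collection of such hyperplanes (or of their intersections with $Y$) is automatically aligned with every $\mathcal{X}_{1,N(k,n)}$ via $\varphi_k$. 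This reduces everything to producing, for each of our two fractals, a totally dense respectful collection $\mathcal{L}$ built from rational affine hyperplanes (or rational lines) of $\R^n$.

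In the product case, I take $\mathcal{L}$ to be the family $\{Y \cap A_i\}$ used in the proof of Theorem~\ref{thm: fractals}, i.e.\ intersections of $Y$ with hyperplanes $\{x_j = r\}$ for $j \in \{1,2\}$ and $r \in Y_j \cap \Q$; that proof already establishes its total density in $Y$ and the absence of isolated points. For the Koch snowflake, I take $\mathcal{L}$ to consist of intersections of $Y$ with the two families of rational lines of slopes $0$ and $\theta$ through the dense set $Y_0$ of rational points, as in Remark~\ref{Koch}; these intersect $Y$ in uncountable perfect sets, so the required totaldensity and no-isolated-points properties hold. For the forbidden collection I take $\mathcal{R}$ to consist of all sets $\{\y \in Y : P(\y)=0\}$ with nonzero $P \in \Z[X_1, \dots, X_n]$. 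Each such set is proper: in the product case, fixing variables one at a time and using that each $Y_j$ is infinite shows a nonzero $P$ cannot vanish identically on $Y_1 \times \cdots \times Y_n$; for the Koch snowflake, transcendentality is forced by the fact that its Hausdorff dimension exceeds $1$ while any proper algebraic subvariety of $\R^2$ over $\Q$ has dimension at most $1$.

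The main point, and the only step requiring nontrivial work, is verifying that $\mathcal{L}$ respects every $R \in \mathcal{R}$. For the product case this is precisely the argument carried out in the proof of Theorem~\ref{thm: fractals}: if a nonzero $P$ vanishes on an open (in $Y \cap A$) subset of $Y \cap A$ where $A = \{x_j = r\}$, that subset contains $\{r\} \times \prod_{i \ne j}(W_i \cap Y_i)$ for nonempty open intervals $W_i$, and perfection of the $Y_i$ combined with the analyticity of $P$ lets one read off all partial derivatives of $P$ along $A$ at a point, forcing $P|_A \equiv 0$ and hence $Y \cap A \subseteq R$. For the Koch snowflake the verification is even easier: if $P$ vanishes on an infinite subset of a line $\ell$, then $P|_\ell$ is a one-variable polynomial with infinitely many zeros, so $P|_\ell \equiv 0$ and $\ell \cap Y \subseteq R$. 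Once both conditions are in place, Theorem~\ref{thm: special axiomatic theorem}(b) yields a dense uncountable set of $\x \in Y$ in $\bigcap_k \varphi_k^{-1}\bigl(\UA_{1,N(k,n)}(f_k)\bigr) \setminus \bigcup_{R \in \mathcal{R}} R$, i.e.\ exactly a dense uncountable set of non-algebraic points of $Y$ that are $f_k$-uniform of degree $k$ for every $k$.
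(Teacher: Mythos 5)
Your proposal is correct and follows essentially the route the paper indicates. The paper does not write out a proof of this theorem at all: it just says ``Using similar ideas and arguing as in the proof of Theorem~\ref{thm: fractals} and Remark~\ref{Koch}, one can establish a similar result for fractals,'' which is precisely the synthesis you carry out. Your choices of $\varphi_k$, the standard systems $\mathcal{X}_{1,N(k,n)}$, the collection $\mathcal{L}$ of fractal--line intersections, and $\mathcal{R}$ as zero sets of nonzero integer polynomials are exactly what is meant, and the alignment observation (that $\varphi_k$ sends a rational affine hyperplane into $\pi^{-1}$ of itself, a rational hyperplane in $\R^{N(k,n)}$) is the one used in the proof of Theorem~\ref{thm: k singular stronger}. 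One small remark: for the respecting step you invoke the full ``determine all partial derivatives'' argument from Theorem~\ref{thm: fractals} in the product case, but since here $\mathcal{R}$ consists of zero sets of polynomials (rather than arbitrary closed analytic submanifolds), the elementary argument you use for the Koch snowflake---a one-variable polynomial with infinitely many zeros on a line must vanish identically, applied coordinate by coordinate---would also suffice in the product case and is somewhat cleaner. Either way, both verifications are valid, and Theorem~\ref{thm: special axiomatic theorem}(b) applies as you say.
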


%\comm{I omitted the proof, but maybe we want to say something else here?}

  %Let $Y = C_1 \times C_2 \subset \R^2$, where the $C_i$ are perfect    subsets of $\R$ with $\Q \cap C_i$ dense in $C_i$. Then the set of $\x \in Y$ which are not algebraic and are $k$-singular for every $k$, is uncountable. 
 
%This result extends work of \cite{KMW}; namely, in that paper it was shown that 
%if $Y = C_1 \times C_2$ as in Case \ref{item: stronger 3}, then 
%$Y$ contains uncountably many singular vectors. 
 
  \ignore{  \begin{proof}
Recall that a polynomial in $\Q[X_1, \ldots, X_d]$ is {\em irreducible} if it cannot be
written as a product of two nonconstant polynomials, and that if $\x$
is algebraic then there is an irreducible $P \in \Z[X_1, \ldots, X_d]$
with $P(\x)=0.$     
    In both cases of the Theorem, we let $\{R_j\}$ be the collections of all sets
$\{\y \in Y: P(\y) =0\}$, where $P$ ranges over all nonzero irreducible polynomials
in $\Z[X_1, X_2]$. 
  We let $\{L_i\}$ be the
    collections of intersections of $Y$ with lines of the form
    $\{c_1\} \times \R$ or $\R \times \{c_2\}$, where $c_i \in C_i
    \cap \Q$ for $i=1,2$. We use the functions $f_k$ and Diophantine spaces
    $\mathcal{X}_k$ used in the proof of 
    Theorem \ref{thm: k singular stronger},  and check the conditions
    of Theorem \ref{thm: general axiomatic theorem}.

    Condition
    \ref{item: a} follows again from Lemma \ref{lem:
      auxiliary}. Condition \ref{item: b} follows from the fact that
    $\Q^2 \cap Y$ is dense in $Y$. For condition \ref{item: c}, let
    $W$ be an open subset of $Y$ and let $L_i$ with $L_i \cap W \neq
    \varnothing$. Suppose with no loss of generality that $L_i
    = Y \cap (\{c_1\} \times \R)$ for some $c_1 \in C_1 \cap \Q$, and
    let $c_2 \in C_2 \cap \Q$ so that $(c_1, c_2) \in W. $ Let $L' =
    Y \cap (\R \times \{c_2\})$. Since $Y$ is a product set, we have
    that both $L_i
    \cap W$ and $L'\cap W$ are infinite. Since $R_j$ is the
    intersection of $Y$ with the zero set of a polynomial, if it intersects $L_i$
     in an infinite set of points then it contains $L_i$, and
     similarly for $L'$. Furthermore, by the irreducibility in the
     definition of $R_j$, it cannot contain both $L_i$
     and $L'$. If $R_j$ does not contain $L'$ we set $L_j = L'$, and
     otherwise we set $L_j = L_i$. In both cases we see that
     \ref{item: c} and \ref{item: d} hold.
   \end{proof}}

\section{Sumsets of  sets of totally irrational \\ uniformly approximable vectors}\label{sumsets}
Our goal in this section is to prove Theorem \ref{thm: schleischitz extension}, stating that when $n\ge 3$, the sum of
{$\UA^*_{1,n}(f_1) $ with $\UA^*_{1,n}(f_2) $ is $\R^n$ for any pair of approximating functions $f_1,f_2$. Clearly, by replacing each of the functions $f_i$ with $\min(f_1,f_2)$,
it is enough to take a non-increasing $f:\R_{>0}\to \R_{>0}$ and prove that $\UA^*_{1,n}(f) +\UA^*_{1,n}(f) =\R^n$.}
% we can assume that $f_1 = f_2 = f$.} 
Equivalently, since $\UA^*_{1,n}(f) $ coincides with $ -\UA^*_{1,n}(f)$, we need to show that  for any fixed  $\mathbf{z} \in \R^n$   the intersection of 
$\UA^* _{1,n}(f) $ and its translate $\UA^*_{1,n}(f) - \mathbf{z}$ is not empty. %{Obviously} this is the case if $\z$ is in $\UA^*_{1,n}(f) $, so without loss of generality we can assume that $\z$ itself is totally irrational. 
Unwrapping the definitions, we see that it amounts to finding  $\x\in\R^n$ such both $\x$ and $\x +\z$ are  totally irrational, and such that  for every sufficiently
large $t$  one can find
%a nonzero integer 
$\vq  \in \Z^n\nz$ and $p \in \Z$
with
$$|\x\cdot\vq -p|\le f(t)
  \ \ \ \mathrm{and}  
\ \ \|\vq\|\le t,$$
and also $\vq'  \in \Z^n\nz$ and $p' \in \Z$
with
$$|(\x-\z)\cdot\vq' -p'|\le f(t)
  \ \ \ \mathrm{and}  
\ \ \|\vq'\|\le t.$$
Rewriting the two systems of inequalities above   as
%a nonzero integer 
$$\max(|\x\cdot\vq -p|,|(\x-\z)\cdot\vq' -p'|) \le f(t)
  \ \ \ \mathrm{and}  
\ \ \max(\|\vq\|,\|\vq'\|)\le t,$$
we see that the problem reduces to a new Diophantine system 
 %{\eq{xz}
 $${\mathcal{X}_\z := \big( \R^n,  \{d_{p,p',\vq,\vq'}\},  \{h_{p,p',\vq,\vq'}\}\big)}$$
where $d_{p,p',\vq,\vq'}(\x) := \max(|\x\cdot\vq -p|,|(\x-\z)\cdot\vq' -p'|)$, $h_{p,p',\vq,\vq'} := \max(\|\vq\|,\|\vq'\|)$, 
and $(p,p',\vq,\vq')$ runs through $\Z\times \Z\times (\Z^n\nz)\times(\Z^n\nz)$.}  Note that the zero locus of $d_{p,p',\vq,\vq'}$ is precisely the intersection $L_{p, \vq} \cap (L_{p', \vq'} + \z)$ of two hyperplanes in $\R^n$; in other words, the collection 
\begin{equation*}{
\mathcal{L}_\z\df \left\{L_{p, \vq} \cap (L_{p', \vq'} + \z)  \left| \begin{aligned}\ p,p'\in\Z,\ \vq,\vq'\in\Z^n\nz,\ \ \\ \ \vq\text{ and }\vq' \text{ are not proportional}\end{aligned}\right.\right\}}
\end{equation*}
is aligned with the Diophantine system ${\mathcal{X}_\z}$. %{\equ{xz}}. 
%We remark that some of the intersections in \equ{lz}, namely those for which   $ \vq'$ and $ \vq'$ are proportional, but clearly adding empty sets to a collection  $\mathcal{L}$ makes no difference as far as the conditions of   Theorem \ref{thm: special axiomatic theorem} are concerned.)

\smallskip
The crucial step of proof of Theorem \ref{thm: schleischitz extension} will be the following
\begin{lemma}\label{thm: intersection TD} Let   $n \geq 3$ and $\z\in\R^n$. Then $\mathcal{L}_\z$
%\footnote{(This intersection is empty if $ \vq'$ and $ \vq'$ are proportional, but clearly adding empty sets to a collection  $\mathcal{L}$ makes no difference as far as the conditions of   Theorem \ref{thm: special axiomatic theorem} are concerned.)} 
%of codimension two subspaces of $\R^n$ 
is totally dense relative to the collection of all affine hyperplanes in $\R^n$.
\end{lemma}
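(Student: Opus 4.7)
The plan is to verify the two conditions \equ{vd1} and \equ{vd2} defining total density relative to the family of affine hyperplanes.

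For \equ{vd1}, given any $\y_0 \in \R^n$ and $\vre > 0$, I would construct a nearby $\x$ by choosing $x_1 = a/b \in \Q$ close to $y_{0,1}$, then $x_2$ such that $x_2 - z_2 = a'/b' \in \Q$ close to $y_{0,2}$, and setting $x_i = y_{0,i}$ for $i \ge 3$. With $a,b,a',b'\in\Z$, one has $\x \in L_{a,\,b\e_1} \cap (L_{a',\,b'\e_2} + \z) \in \mathcal{L}_\z$, since $b\e_1$ and $b'\e_2$ are not proportional.

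For \equ{vd2}, fix $L = L_{p_0, \vq_0} \cap (L_{p_0', \vq_0'} + \z) \in \mathcal{L}_\z$ with $L \cap W \ne \varnothing$ and an affine hyperplane $R = \{\r \cdot \x = c\}$. If $L \not\subset R$, simply take $L' = L$. So assume $L \subset R$. Since $L$ has positive dimension $n - 2 \ge 1$ with direction space $V = \vq_0^\perp \cap (\vq_0')^\perp$, this containment forces $\r \in V^\perp = \mathrm{span}(\vq_0, \vq_0')$; write $\r = \alpha \vq_0 + \beta \vq_0'$ with $(\alpha,\beta) \ne (0,0)$.

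The main step is to replace exactly one defining hyperplane of $L$ by a new one. Pick any $\vq^* \in \Z^n$ with $\vq^* \notin \mathrm{span}(\vq_0, \vq_0')$, available since $n \ge 3$. If $\alpha \ne 0$, keep $(\vq_0', p_0')$ and replace $\vq_0$ by an integer multiple of $\vq^*$; if $\alpha = 0$ (so $\beta \ne 0$), keep $(\vq_0, p_0)$ and do the analogous replacement on the second factor. A direct linear-algebra check using $\vq^* \notin \mathrm{span}(\vq_0, \vq_0')$ together with $\alpha \ne 0$ (resp.\ $\beta \ne 0$) shows that the new pair is linearly independent and that $\r \notin \mathrm{span}(\text{new pair})$, so the resulting $L' \in \mathcal{L}_\z$ satisfies $L' \not\subset R$. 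To put the replacement into $\Z^n$ while keeping $L' \cap L \cap W \ne \varnothing$, I would use a scaling trick: the affine map $\x \mapsto \vq^* \cdot \x$ (or $\x \mapsto \vq^* \cdot (\x - \z)$) is non-constant on $L$ since $\vq^* \notin V^\perp$, so its image on the relatively open set $L \cap W$ is a non-empty open subset of $\R$ and hence contains some rational $p/N$ with $N \in \N$; choosing $\x_1 \in L \cap W$ with $\vq^* \cdot \x_1 = p/N$ yields $(N\vq^*) \cdot \x_1 = p \in \Z$, placing $\x_1$ in the desired $L'$.

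The main obstacle is keeping straight the bookkeeping between the subcases $\alpha \ne 0$ and $\alpha = 0$ (since which factor must be replaced depends on this), and converting the real-parameter choice of $\vq^*$ into integer form; the scaling trick is what resolves the latter point.
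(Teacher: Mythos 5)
Your proof is correct, and it takes a genuinely different route from the paper's. The paper first establishes a sublemma (Sublemma 6.2): for any affine hyperplane $N\subset\R^n$ with $n\geq 3$, the collection of intersections of rational hyperplanes with $N$ is totally dense in $N$, with the union in \equ{vd3} dense in $N$. It then applies this sublemma to each of the two defining hyperplanes $M=L_{p,\vq}$ and $N=L_{p',\vq'}+\z$ of a given $L\in\mathcal{L}_\z$, concluding that the closure of $\bigcup\{L':L'\cap L\cap W\neq\varnothing\}$ contains $M\cup N$ and hence cannot lie in a single affine hyperplane $R$. Your argument instead proceeds by an explicit linear-algebra dichotomy: if $L\not\subset R$ take $L'=L$; if $L\subset R$ then $\r\in\operatorname{span}(\vq_0,\vq_0')$, write $\r=\alpha\vq_0+\beta\vq_0'$, and replace whichever defining hyperplane of $L$ has a nonzero coefficient by one with normal $\vq^*\notin\operatorname{span}(\vq_0,\vq_0')$; the observation that $\vq^*\cdot\x$ (resp.\ $\vq^*\cdot(\x-\z)$) is non-constant and affine on $L$ lets you hit a rational value on $L\cap W$ and thus obtain an integer equation there after clearing denominators. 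The paper's argument is softer and yields a reusable density sublemma, while yours is more elementary and self-contained, making the role of the codimension-one obstruction $R$ and the dimension assumption $n\geq 3$ (which guarantees $\vq^*$ exists) completely explicit. Both are valid.
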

%\comm{I omit the proof for now, but it is clear that what is written later in this section amounts to the proof.}
Note that we are not able to prove the stronger form of total density, that is, the validity of \equ{vd3} for $\mathcal{L} = \mathcal{L}_\z$. %\comm{Maybe it is not even true?} 
%In other words, we will show that the union in \equ{vd3} cannot lie in a single affine hyperplane. \comm{But will its closure have non-empty interior? no clue.}

\begin{proof}[Proof of Lemma \ref{thm: intersection TD}]
{Let us start with the following elementary
\begin{sublemma}\label{sublemma: TD} Let   $n \geq 3$ and let 
$N$ be an affine hyperplane of $\R^n$.
%= L_{p', \vq'} + \z$ 
%for some fixed 
%$\z\in\R^n$, $p'\in\Z$ and $\vq'\in\Z^n\nz$. 
Then the collection 
$$\mathcal{L}(N)\df \{M\cap N: M \text{ is a rational affine hyperplane of }\R^n,\ M\ne N\}$$
is totally dense in $N$; furthermore, the union in \equ{vd3} is dense in $N$.
%\footnote{(This intersection is empty if $ \vq'$ and $ \vq'$ are proportional, but clearly adding empty sets to a collection  $\mathcal{L}$ makes no difference as far as the conditions of   Theorem \ref{thm: special axiomatic theorem} are concerned.)} 
%of codimension two subspaces of $\R^n$ 
\end{sublemma}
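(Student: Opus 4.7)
The plan is to reduce the sublemma to a density statement about affine hyperplanes in $\R^{n-1}$ via an affine identification of $N$ with $\R^{n-1}$. Let $V \subset \R^n$ denote the direction subspace of $N$, fix $n_0 \in N$, choose an $\R$-basis $v_1,\dots,v_{n-1}$ of $V$, and let $\Phi : \R^{n-1} \to N$, $\Phi(t) \df n_0 + \sum_i t_i v_i$, be the resulting affine bijection. For any rational affine hyperplane $M = \{\x : \vq \cdot \x = r\}$ with $\vq \in \Z^n\nz$ and $r \in \Q$, a direct computation gives
\[
\Phi\inv(M \cap N) = \left\{t \in \R^{n-1} : \textstyle\sum_i a_i t_i = b\right\}, \quad a_i \df \vq \cdot v_i, \quad b \df r - \vq \cdot n_0,
\]
which is a proper affine hyperplane of $\R^{n-1}$ exactly when $(a_1,\dots,a_{n-1}) \ne 0$. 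The exceptional case $(a_i) = 0$ forces $\vq \in V^\perp$, that is, $M \parallel N$; then either $M = N$ or $M \cap N = \varnothing$, both of which are already excluded from $\mathcal{L}(N)$.

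The core technical step is to prove that the family $\mathcal{L}' \df \Phi\inv(\mathcal{L}(N))$ is dense in the space of all affine hyperplanes of $\R^{n-1}$, with respect to convergence of the defining coefficients modulo scaling. Let $T : \R^n \to \R^{n-1}$ denote the surjective linear map $\vq \mapsto (\vq \cdot v_i)_{i}$. Given any target coefficient vector $(a^0, b^0) \in (\R^{n-1}\nz) \times \R$ and any $\lambda > 0$, one can find $y \in \R^n$ with $T(y) = \lambda a^0$, then take $\vq \in \Z^n$ to be a lattice point within distance $\tfrac{1}{2}\sqrt{n}$ of $y$, giving $|T\vq - \lambda a^0| \leq C$ for a constant $C$ depending only on the $v_i$; finally $r \in \Q$ can be chosen within $1$ of $\lambda b^0 + \vq \cdot n_0$. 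Dividing the resulting pair $(T\vq,\, r - \vq \cdot n_0)$ by $\lambda$ and sending $\lambda \to \infty$ produces a sequence in $\mathcal{L}'$ whose defining equations converge (modulo scaling) to $\sum_i a^0_i t_i = b^0$, establishing the density.

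To deduce the sublemma, let $W \subset N$ be open, let $L \in \mathcal{L}(N)$ satisfy $L \cap W \ne \varnothing$, and let $p \in N$ be arbitrary. If $p \in L$, take $L' = L$. Otherwise pick any $q \in L \cap W$; then since $p \notin L$, any affine hyperplane $H \subset \R^{n-1}$ through $\Phi\inv(q)$ and $\Phi\inv(p)$ is necessarily distinct from $\Phi\inv(L)$, and such $H$ exists because $n - 1 \geq 2$. By the density of $\mathcal{L}'$, one can approximate $H$ arbitrarily well by some $L' \in \mathcal{L}(N)$; this $L'$ passes close to both $q$ and $p$ and, being transverse to $L$ for sufficiently close approximations, meets $L$ in a neighborhood of $q$ that lies inside $W$. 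Thus $\bigcup \{L' \in \mathcal{L}(N) : L' \cap L \cap W \ne \varnothing\}$ is dense in $N$, which is precisely the furthermore statement and in particular yields total density.

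The main obstacle is the case where $N$ is irrational but $V$ contains non-trivial rational vectors (e.g.\ $V = \{x_1 + \sqrt{2}\,x_2 = 0\} \subset \R^3$). In such situations $T(\Z^n)$ is not dense in $\R^{n-1}$, so one cannot directly approximate the target $(a^0, b^0)$ by elements $(T\vq, r - \vq \cdot n_0)$. The projective framing of the density statement, together with the scaling by $\lambda \to \infty$, sidesteps this issue by requiring only a bounded (hence asymptotically vanishing) relative error.
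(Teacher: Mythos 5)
Your proposal is correct, but it follows a genuinely different (and more laborious) route than the paper. The paper also identifies $N$ with $\R^{n-1}$, but it does so via a \emph{coordinate projection}: after permuting coordinates so that $N$ is a graph $x_n = a_1x_1+\cdots+a_{n-1}x_{n-1}+a_n$, the projection $\pi$ onto the first $n-1$ coordinates is a bijection $N\to\R^{n-1}$. The key simplification is then the observation that, for defining data with $q_n=0$, the image $\pi(M\cap N)$ is \emph{exactly} the rational affine hyperplane $\{q_1x_1+\cdots+q_{n-1}x_{n-1}=p\}$ of $\R^{n-1}$. Hence $\pi(\mathcal{L}(N))$ \emph{contains} the full family of rational affine hyperplanes of $\R^{n-1}$, whose total density (with the strengthening that the union in \equ{vd3} is dense) is elementary. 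No approximation is needed at all. In contrast, you work with a generic affine chart $\Phi$ built from an arbitrary basis of the direction space $V$, under which $T(\Z^n)=\{(\vq\cdot v_i)_i:\vq\in\Z^n\}$ need not be dense in $\R^{n-1}$ (as you correctly flag for, e.g., $V=\{x_1+\sqrt2\,x_2=0\}$), and you must compensate with the $\lambda\to\infty$ scaling argument to get density of $\mathcal{L}'$ only in the \emph{projective} sense. That argument does work, and your deduction of \equ{vd3} from it via transverse perturbation of a hyperplane $H$ through $\Phi^{-1}(q)$ and $\Phi^{-1}(p)$ is sound (here you correctly use $n-1\ge2$). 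What the paper's coordinate projection buys you is the elimination of the entire irrationality issue: the convenient subfamily $\{q_n=0\}$ lands you on a nose in the rational-hyperplane family of $\R^{n-1}$, bypassing the scaling trick and the verification of projective convergence. Your approach is perfectly valid and arguably more robust in spirit, but it introduces machinery that the coordinate-adapted choice renders unnecessary.
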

\begin{proof} By permuting coordinates, without loss of generality we can express $N$ in the form
$$
x_n = a_1x_1 + \cdots+a_{n-1}x_{n-1} + a_n,\quad\text{where }a_i\in\R;
$$
this way the projection $\pi: (x_1,\dots,x_n)\mapsto(x_1,\dots,x_{n-1})$ maps $N$ bijectively onto $\R^{n-1}$. Further, 
any $L\in \mathcal{L}(N)$ is of the form
\eq{LN}{
\{\x\in\R^n: x_n = a_1x_1 + \cdots+a_{n-1}x_{n-1} + a_n, \ q_1x_1 + \cdots+q_{n}x_{n} = p\}
}
for some $\q\in\Z^n\nz$ and $p\in\Z$. It suffices to prove that the collection $\pi\big(\mathcal{L}(N)\big)$ of affine hyperplanes of $\R^{n-1}$  is totally dense in $\R^{n-1}$. By considering the case $q_n = 0$ %and $q_n \ne 0$ separately, 
in  \equ{LN}, 
it is easy to see that $\pi\big(\mathcal{L}(N)\big)$ contains the collection of all rational affine hyperplanes of $\R^{n-1}$.
The latter collection, in additional to being totally dense, has the following property: for any affine hyperplane $L$ of $\R^{n-1}$, any open $W\subset \R^{n-1}$ with $W\cap L \ne\varnothing$ and any $\y\in\Q^{n-1}\smallsetminus L$ there exists $\y'\in\Q^{n-1}$ such that the (rational) line passing through $\y$ and $\y'$ intersects $W\cap L$. This clearly implies that the union of all rational affine hyperplanes touching $W\cap L \ne\varnothing$ is dense in $\R^{n-1}$, hence  the total density of $\pi\big(\mathcal{L}(N)\big)$.
% = \mathcal{L}_1\cup \mathcal{L}_2$, where $$\mathcal{L}_1 = \{\text{all rational affine hyperplanes of }\R^{n-1}\},$$ and (by solving the system in \equ{LN} for $x_n$)
%$$
%\begin{aligned}\mathcal{L}_2 = \left\{%\{(x_1,\dots,x_{n-1}): (
%L_{(a_1 + {q_1}/{q_n},\dots,a_{n-1} + {q_{n-1}}/{q_n}), -a_n + {p}/{q_n}}: q_1,\dots,q_{n-1},p\in\Z,\ q_n\in\Z\nz
%\right\}.
%\end{aligned}$$
%Clearly the union of hyperplanes in $\mathcal{L}_1$ is already dense in $\R^{n-1}$. Furthermore for any open subset 
\end{proof}
The above sublemma in particular implies that for any $N = L_{p', \vq'} + \z$ the union $$\bigcup_{p\in\Z,\,\vq\in\Z^n\setminus\R\vq} L_{p, \vq}\cap N$$ is  dense in $N$. Since the union  of all 
 rational affine
hyperplanes translated by $\z$ is dense in $\R^n$, it follows that $\bigcup_{L\in{\mathcal L}_\z}L$ is dense in $\R^n$, i.e.\ condition \equ{vd1} holds.}

%of intersections   
%$$\bigcup_{p'\in\Z,\,\vq'\in\Z^n\setminus\R\vq} L_{p, \vq}\cap (L_{p', \vq'} + \z)$$ is  dense in $ L_{p, \vq}$, Thus it remains to prove \equ{vd2} with $\mathcal R$ being the collection of all affine hyperplanes in $\R^n$.

Now take $L\in{\mathcal L}_\z$, that is, $L  = L_{p, \vq} \cap (L_{p', \vq'} + \z) $ such that $ \vq$ and $\vq'$  are not proportional, and {choose} an open subset $W$ of $\R^n$ {with} $L\cap W\ne\varnothing$.  For brevity denote  $M  \df L_{p, \vq}$ and ${N} \df L_{p', \vq'} + \z$. {It follows from Sublemma \ref{sublemma: TD}  that the union of $P\cap N$ over all  rational affine hyperplanes $P\ne N$ that satisfy $P\cap W\cap L\ne\varnothing$ is dense in $N$. Applying a translation by $-\z$ to the above conclusion one gets that the union of $(P+\z)\cap M$ over all  rational affine hyperplanes $P $ with $P+\z \ne M$ and $P\cap W\cap L\ne\varnothing$ is dense in $M$.} It follows that the closure of the union of all $L'\in{\mathcal L}_\z$ such that $L'\cap W\cap L\ne\varnothing$ contains $M\cup N$, thus {it} cannot be a subset of a single affine hyperplane.
\ignore{Note that an affine hyperplane $P$ is parallel to $L$ if and only if its normal vector is a linear combination of $ \vq$ and $\vq'$. Since $n\ge 3$, the union of rational affine hyperplanes $P$ which are not parallel to $L$ and satisfy $P\cap W\cap L\ne\varnothing$ is dense in $\R^n$; hence the union of their intersections with $M'\cap W$ is dense in $M'$. \comm{(!)} The same argument can be applied to $M$:  the union of translates $P + \z$ of all rational affine hyperplanes not parallel to $L$ and satisfying $(P+\z)\cap W\cap L\ne\varnothing$ is dense in $\R^n$; hence the union of their intersections with $M\cap W$ is dense in $M$. It follows that the closure of the union of all $L'\in{\mathcal L}_\z$ such that $L'\cap W\cap L\ne\varnothing$ contains $M\cup M'$, thus {it} cannot be a subset of a single affine hyperplane.}\ignore{ Let $L_i, R_\ell$ and
$W$ as in the statement be given. Write $L_i = Q_1 \cap (Q_2 - \x_0)$
and $R_\ell = R$ or $R_\ell= R- \x_0$, where $Q_1, Q_2, R$ are rational
affine hyperplanes. We will say that a collection of rational affine
hyperplanes is {\em independent} if the normal vectors to these
hyperplanes are linearly independent.
%Assume first that
%\begin{equation}\label{eq: assume first}
%  R_j =
%  R,
%\end{equation}
%and also assume that \begin{equation}\label{eq: assume second}
%  L_i \not \subset R_j,
%\end{equation}
%that is, $\{Q_1, Q_2,
%R\}$ are independent.
We are given that $\{Q_1, Q_2\}$ are
independent (since $\dim L_i=n-2$). This implies that either $\{Q_1,
R\}$ or $\{Q_2, R\}$ (or both) are
independent. Assume with no loss of generality that $\{Q_2, R\}$ are
independent. We will show that we can choose a rational
hyperplane $Q'$, so that \ref{item: c} holds for 
$$
L_{j} \df Q'
\cap (Q_2 - \x_0). 
$$
Using again the assumption $n \geq 3$, we choose $Q'$ so that $\{Q_1, Q_2, Q'\}$ are independent. This
implies that $\dim L_{j} = n-2$ and that $L_i \cap L_j \neq
\varnothing$. By choosing $Q'$ 
close to $Q_1$, we can
also guarantee that $ L_i \cap L_j \cap W \neq 
\varnothing.$ And since $\{Q_2, R\}$ are independent, we  can also
choose $Q'$ so that $\{Q', Q_2, R\}$ are independent, and this ensures
that 
$L_j \not \subset R_\ell$. }
    \end{proof}

\ignore{Clearly $\UA^*_{1,n}(f, \vw) = -\UA^*_{1,n}(f, \vw)$. Thus it suffices
to prove that for any $\x_0 \in \R^n$ we can find $\x, \mathbf{y}  \in
\UA^*_{1,n}(f, \vw)$ such that $\x_0 = \x - \mathbf{y}$. To this end we
use Theorem \ref{thm: general axiomatic theorem} with $k=1,2,$
with
$$
\varphi_1, \varphi_2: \R^n \to \R^n  \ \ \text{ defined by } \
\varphi_1(\x) = \x, \ \ \varphi_2(\x) = \x - \x_0,
$$
and with $\mathcal{X}_1 = \mathcal{X}_2$ the Diophantine
space \eqref{eq: Shreyasi and Nattalie} on $\R^3$ which is adapted to
approximation with weights. Clearly our problem 
is to find $\x \in \R^n$ such 
that $\varphi_1(\x)$ and $\varphi_2(\x)$ are both in
$\UA^*_{1,n}(f, \vw).$

The indexing set for the collection $\{L_i : i \in \mathcal{I}\}$ will
be quadruples consisting of $\mathbf{q}_1,
\mathbf{q}_2 \in \Z^n$ and $\mathbf{p}_1, \mathbf{p}_2 \in \Z$. For
such a quadruple, we will set 
$$
L = L(\mathbf{q}_1,  _1, \mathbf{q}_2,  {p}_2) = \left\{
\mathbf{z} \in \R^n : \mathbf{q}_1 \cdot \mathbf{z}  = {p}_1, \
\ \mathbf{q}_2 \cdot (\mathbf{z} + \x_0) = {p}_2
\right \},
$$
and define $\{L_i\}$ to be the collection of all $L=L(\mathbf{q}_1,
 {p}_1, \mathbf{q}_2,
 {p}_2)$ with $\dim L=n-2$. That is, the collection  $\{L_i\}$
consists of all possible 
intersections $Q_1 \cap (Q_2 - \x_0)$, for two rational hyperplanes
$Q_1, Q_2$ intersecting transversally. 
We define the $\{R_j\}$ to be the collection of all sets of the form
$\varphi_1^{-1}(R), \varphi_2^{-1}(R)$, where $R$ ranges over all
rational affine hyperplanes of $\R^n$. 

It is immediate from our definition of the collection
 $\{L_i\}$ that condition
\ref{item: a} holds. Since the $L_i$ and $R_j$ are
affine subspaces, with $\dim L_i >0$, condition \ref{item: d}
holds (here we have used that $n \geq 3$). Since every
affine hyperplane of dimension $n-2$ can be 
represented as the transverse intersection of two affine hyperplanes, and since  rational affine
hyperplanes are dense in all affine hyperplanes, the collection
$\{L_i\}$ is dense in the collection of affine subspaces of dimension
$d-2$ in $\R^n$. This implies condition 
\ref{item: b}.
For condition \ref{item: c} we argue as follows.} 
{Now,} since $\mathcal{L}_\z$ obviously respects any affine subspace in $\R^n$, 
%After the lemma is proved, it becomes clear that 
we can apply Theorem \ref{thm: special axiomatic theorem} {to $\mathcal X_\z$
with %arbitrary choices of height functions and  
$\mathcal{R} = \{R_\ell\}$ being an arbitrary countable collection of affine hyperplanes of $\R^n$. This immediately yields the proof of Theorem \ref{thm: schleischitz extension} in a slightly more general form, with $\UA^*_{1,n}(f_k)$, $k=1,2$, in \equ{sumset} replaced by $\UA_{1,n}(f_k) \smallsetminus\bigcup_\ell R_\ell$ for $\mathcal{R}$ as above.}

\ignore{ in particular, results in the following generalization of  Theorem \ref{thm: schleischitz extension}:
\begin{theorem}\label{thm: schleischitz extension general} Let   $n \geq 3$,  let $\mathcal{W}$ be a % AFW 
subset of $\R_{>0}^{n+1}$ {satisfying \equ{afw}},
and let $\{R_\ell\}$ be a countable collection of proper affine hyperplanes of $\R^n$.
Then for any   non-increasing $f:\R_{>0} \to \R_{>0}$ one has %and any 
  %set
  %$\vw$ 
 % of weights,
 %two weight vectors $\vw_1,\vw_2$,
 $$\Big(\UA_{1,n}^*(f,\mathcal{W}) \smallsetminus\bigcup_\ell R_\ell\Big) + \Big(\UA^*_{1,n}(f,\mathcal{W}) \smallsetminus\bigcup_\ell R_\ell\Big)  = \R^n
  $$ and $$\Big(\bigcap_{\vw\in\R^{n+1}_{>0}}\UA^*_{1,n}(f,\vw)\Big) + \Big(\bigcap_{\vw\in\R^{n+1}_{>0}}\UA^*_{1,n}(f,\vw)\Big) = \R^n. $$
  %In particular
 % $$\R^d =\UA^*_d(f) + \UA^*_d(f) 
%.$$
\end{theorem}}
%\comm{Feel free to change the statement of the theorem if you wish.}

  \begin{remark} \rm
%As shown in the proof, we have that for $n \geq 3$, for any $\mathbf{x} \in \R^n$, and positive decreasing $f$, and any weights $\vw$, we have $$\UA_{1,n}(f,\vw) \cap \left(\UA_{1,n}(f,\vw)  - \mathbf{x} \right) \neq \varnothing.$$
%In fact i
It is not hard to show, by adapting the above proof, that for
any $n \geq 3$, %any sets of weights $\vw_1, \ldots, \vw_{n-1}$, 
any
positive non-increasing $f_1, \ldots, f_{n-1}$, and any $\mathbf{z}_1,
\ldots, \mathbf{z}_{n-1}$, we have
$$
\bigcap_{k=1}^{n-1}\left( \UA^*_{1,n}(f_k) - \mathbf{z}_k
\right) \neq \varnothing.
$$
 {Note that our method gives no information on the intersection of translates of  $\UA^*_{1,2}(f)$. In particular, it is an open problem to determine whether for any non-increasing $f:\R_{>0}\to \R_{>0}$ the sumset of $\UA^*_{1,2}(f)$ with itself coincides with $\R^2$.}
   \end{remark}
  \pagebreak
%   {It is not known whether or not a similar result holds for $n=2$.}
   
   %\comm{Also somewhere we should remark about $n=2$.}

%    \begin{remark}
%Let $\dim_H(A), \dim_B(A)$ denote respectively the Hausdorff dimension
%and box dimension of $A \subset \R^d$. It is well-known (see
%e.g. \cite{Mattila}) that for $A_1, A_2 \subset \R^d$, the Cartesian
%product $A_1\times A_2$ satisfies the bounds
%$$
%% \dim_H(A_1) + \dim_H(A_2) \leq
%\dim_H (A_1 \times
%A_2) \leq \dim_H(A_1)+\dim_B(A_2)$$
%and
%$$\dim_H(A_1 \times A_2) \geq \dim_H(A_1 + (A_2 + \mathbf{x})).$$
%It
%is also known that $\dim_H(\UA_d^*(f)) \geq d-2$ for all $d$
%(see \cite[\S 3.2]{dfsu} for this, and for 
%more work in this direction). 
%Combining this with Theorem \ref{thm: schleischitz extension} we
%obtain that for 
% any positive decreasing 
%$f$ we have $\dim_B(\UA_3(f)) \geq 2$.
 %     \end{remark}

\section{Transference and improved rates for vectors\\ in {analytic submanifolds of $\R^n$}}\label{sec: transference} 
  
\ignore{We will deduce Theorem \ref{thm: using transference} from
Theorem \ref{thm: general axiomatic theorem}, and then deduce Theorem
\ref{thm: improvement rate} from Theorem \ref{thm: using
  transference}.

\begin{proof}[Proof of Theorem \ref{thm: using transference}]
  For a rational affine subspace $A$ of $\R^d$, we say that
  $(\mathbf{q}, p) \in \Z^d \times \Z$ belongs to $\mathrm{Ann}(A)$ if
  for all $\mathbf{y} \in A$ we have $\mathbf{y} \cdot \mathbf{q} =
  p$. Clearly $A$ is rational and of codimension $g$ if and only if
  $\mathrm{Ann}(A)$ contains $(\mathbf{q}_1, p_1), \ldots,
  (\mathbf{q}_g, p_g)$ where $\mathbf{q}_1, \ldots, \mathbf{q}_g$ are
  linearly independent. 
  
  We apply Theorem \ref{thm: general axiomatic theorem} with the following choices:
  \begin{itemize}
  \item
    For every $k$, $\mathcal{X}_k = \mathcal{X}= \R^d$.
 For $\mathbf{q}_1,
    \ldots, \mathbf{q}_{s-1}$ linearly independent in $\Z^d$,  and
    $p_1, \ldots, p_{s-1} \in \Z$, we define
    $$
    A = A\left(\left(\mathbf{q}
      _i, p_i\right)_{i=1}^{s-1} \right) \df \left\{ \mathbf{x} \in \R^d:
  \mathbf{q}_i \cdot \mathbf{x} = p_i \ \ \text{ for } 1 \leq i \leq
  s-1 \right\},
$$
and let $\mathcal{A}$ denote the collection of all such $
    A$, indexed by the collection $(\mathbf{q}_i, p_i)$ as above. Note
    that $\mathcal{A}$ is 
     the collection of all rational affine subspaces of codimension
    $s-1$, but each $A \in \mathcal{A}$ appears in this list infinitely many
    times, depending on the choice of indexing vectors $(\mathbf{q}_i,
    p_i)$. Now for $A = A\left(\left(\mathbf{q}
      _i, p_i\right)_{i=1}^{s-1} \right)$, define
    \[
    d_A(x) \df  \max_{i=1, \ldots, s-1} |\mathbf{q}_i
    \cdot  \mathbf{x}-p_i|, \ \ \ \ H(A) \df \max_{i=1,
      \ldots, s-1} \|(\mathbf{q}_i, p_i)\|.
  \]
  Note that the functions $d_A$ actually depend on the indexing set,
  see Convention \ref{rem: multiple labels}. 
   \item
    $Y = L$ and $\varphi_k$ is the given inclusion $L \subset \R^d$. 
  \item
    $\{L_i\} = \{L \cap A: A \in \mathcal{A}\};$ that is, the collection
    of intersections with $L$ of all rational affine hyperplanes of
    codimension $s-1$. Moreover, we take the indexing set of the
    $\{L_i\}$ to be the same indexing set $(\mathbf{q}_i,
    p_i)_{i=1}^{s-1}$, as was used to index the elements of
    $\mathcal{A}$ (the two different usages of the symbol $i$ in what
    follows should cause no confusion);  
  \item
    $\{R_j\}$ is the collection of intersections with $L$, of  all rational affine hyperplanes.
  \end{itemize}
  With these definitions, it is clear that if $\mathbf{x}$ is
  $(\mathcal{X}, f)$-singular in the sense of Theorem \ref{thm:
    general axiomatic theorem}, then it is $f$-singular of order 
  $s-1$. Thus we only need to check conditions (a), (b), (c), (d). For
  this, the following will be useful:

  \begin{claim}\label{claim: useful}
  Let
  $$\mathcal{A}' \df \{L \cap A : A \in \mathcal{A} \ \& \ \dim ( L
  \cap A)=1 \} \subset
  \{L_i\}.$$ Then
the set of lines $ \mathcal{A}'$ is dense in the
set of lines in $L$. Furthermore, for any $L_i$ and any line $\sigma
\subset L$ with $\sigma \cap L_i \neq \varnothing$, there is $L_j \in \mathcal{A}'$
arbitrarily close to $\sigma$ satisfying $L_i \cap L_j \neq \varnothing. $

\end{claim}

In this statement, the density is with respect to the standard
topology on the variety of (affine) lines in $L$; as is well-known
\textcolor{red}{ reference?},
this is a metric topology, where we have convergence of lines
$\sigma_t \to_{t\to \infty} \sigma$ if and only if for any closed ball $\mathbf{B}
\subset L$, whose interior intersects $\sigma$, $\sigma_t
\cap \mathbf{B}$ converges to
$\sigma \cap  \mathbf{ B}$ with respect to the Hausdorff topology on
closed subset of $\mathbf{B}$.
Furthermore, if the $\sigma_t$ are given as solutions of systems
of linear equations $\mathbf{a}^{(t)} \mathbf{x} = \mathbf{b}^{(t)}$, and the
system of coefficients $\left(\mathbf{a}^{(t)}, \mathbf{b}^{(t)} \right)$ converge to $(\mathbf{a}, \mathbf{b})$ for
which the set of solutions $\sigma$ is also a line, then $\sigma^{(t)}
\to \sigma$. 

\medskip

\noindent {\em Proof of Claim.}
We first prove the first assertion. 
 Let $\sigma$ be a line in $L$,  write $L$ as the solution set of $d-s$
 linear equations, namely 
 $$L = \left \{ \mathbf{x} \in \R^d: \mathbf{a}_i \cdot \mathbf{x} = \mathbf{b}_i, \ \ 
   1 \leq i \leq d-s\right\},$$
 where $\mathbf{a}_i \in \R^d$ and $\mathbf{b}_i \in \R$ for $i=1, \ldots, s$, and write
$\sigma$ as the solution set of a further list of $s-1$ equations,
namely 
$$\sigma  = \left\{ \mathbf{x} \in L: \alpha_{i} \cdot \mathbf{x} =
  \beta_{i}, \ \ 1 \leq i \leq d-1 \right\},$$
where $\mathbf{a}_1, \ldots, \mathbf{a}_{d-s}, \alpha_1, \ldots , \alpha_{d-1}$ are linearly independent. 
Now, for $i=1 , \ldots, s-1$ take rational $\alpha^{(t)}_i$ and $\beta_i^{(t)}$
which satisfy that 
$\alpha^{(t)}_i \to \alpha_i$ and
$\beta^{(t)}_i \to \beta_i$ as $t \to \infty$. Then the lines defined by 
\[
  \begin{split}
 \sigma_t & \df  \left\{ \mathbf{x} \in \R^d :  \mathbf{a}_i \cdot  \mathbf{x} = \mathbf{b}_i
   , \ 1 \leq i \leq d-s \ \& \
  \alpha_i^{(t)} \cdot  \mathbf{x} = \beta_i^{(t)}, \ 1 \leq i \leq  s-1
\right\} \\
& =  L \cap  \left\{ \mathbf{x} \in \R^d :  \alpha_i^{(t)} \cdot
  \mathbf{x} = \beta_i^{(t)},  \ \  1 \leq i \leq s-1 \right\}
\end{split}
\]
are  lines in $L$ of the required form, converging to $\sigma$.

Now suppose in addition that $\sigma \cap L_i \neq \varnothing.$ We
would like to define lines $\sigma_t$ as before which additionally
satisfy $\sigma_t \cap L_i \neq \varnothing.$ We write
$$L_i = \left\{ \mathbf{x} \in L : \gamma_i \cdot \mathbf{x} =
  \delta_i, \ \ 1 \leq i \leq s-1\right\},$$
where $\gamma_i,$ and $ \delta_i$ are rational for $i=1, \ldots,
s-1$, and $\gamma_1, \ldots, \gamma_{s-1}$ are linearly
independent. We are given that the solution set of the set of equations
\begin{equation}\label{eq: has solution 1}
  \begin{split}
 %   \mathbf{a}_i \cdot \mathbf{x} = \mathbf{b}_i & \ \ \ \ \ \ \ \ 1 \leq i \leq d-s \\
    \gamma_i \cdot \mathbf{x} = \delta_i & \ \ \ \ \ \ \ \ 1 \leq i \leq s-1 \\
    \alpha_i \cdot \mathbf{x} = \beta_i & \ \ \ \ \ \ \ \ 1 \leq i \leq s-1
    \end{split}
  \end{equation}
 intersects $L$, and our goal is to define rational $\alpha_i^{(t)}$ and $\beta_i^{(t)}$
satisfying $\alpha^{(t)}_i \to_{t\to\infty} \alpha_i, \ \beta^{(t)}_i
\to_{t\to\infty} \beta_i$, such that in addition, the set of solutions
of the equations 
\begin{equation}\label{eq: has solution t}
  \begin{split}
%    \mathbf{a}_i \cdot \mathbf{x} = \mathbf{b}_i & \ \ \ \ \ \ \ \ 1 \leq i \leq d-s \\
    \gamma_i \cdot \mathbf{x} = \delta_i & \ \ \ \ \ \ \ \ 1 \leq i \leq s-1 \\
    \alpha^{(t)}_i \cdot \mathbf{x} = \beta^{(t)}_i & \ \ \ \ \ \ \ \ 1 \leq i \leq s-1
    \end{split}
  \end{equation}
  also intersects $L$. Since the condition of intersecting $L$ is open
  in the variety of affine subspaces, for ensuring intersection with
  $L$ it is enough to choose the $\left(\alpha^{(t)}_i, \beta^{(t)}_i \right)$ close
  to the $(\alpha_i, \beta_i )$ and such that the dimension of the set
  of solutions
  to \eqref{eq: has solution t} is the same as for \eqref{eq: has solution 1}.

  This equality of dimensions 
 is equivalent to the statement that if some vector in
  the ordered list $\gamma_1, \ldots, \gamma_{s-1}, \alpha_1, \ldots,
  \alpha_{s-1}$ is a linear
  combination of its predecessors in the list, then the corresponding
  entry in the list $\delta_1, \ldots, \delta_{s-1}, \beta_1, \ldots,
  \beta_{s-1} $ is also a linear
  combination of its predecessors, with the same coefficients. So we
  need to choose $\alpha_i^{(t)}, \beta_i^{(t)}$ so this further
  restriction is satisfied. We choose the $\alpha^{(t)}_i, \beta_i^{(t)}$
  successively, with indices $i$ going from $1$ to $s-1$. If
  $\alpha_i$ is not a linear combination of its predecessors in the list, we
  simply choose $\alpha_i^{(t)}$ and $\beta^{(t)}_i$  rational within
  distance $\frac{1}{t}$ to $\alpha_i, \beta_i$. If $\alpha_i$ is a linear
  combination of its predecessors, say
  $$\alpha_i = \sum_{j=1}^{d-s}
  c_j \gamma_j + \sum_{j=1}^{i-1} d_j
  \alpha_j,$$
  we choose $c^{(t)}_j, d^{(t)}_j$ rational and within
  distance $\frac{1}{t}$ of $c_j, d_j$, and define
  $$\alpha^{(t)}_i = \sum_{j=1}^{d-s}
  c^{(t)}_j  \gamma_j + \sum_{j=1}^{i-1} d^{(t)}_j
  \alpha^{(t)}_j$$
  and
  $$
\beta^{(t)}_i = \sum_{j=1}^{d-s}
  c^{(t)}_j  \delta_j +\sum_{j=1}^{i-1} d^{(t)}_j \beta^{(t)}_j.
  $$
  With these choices, for large enough $t$, all our requirements are fulfilled. 
\hfill $\triangle$
\medskip

Clearly (b) follows from the claim, and (a) and (d) are immediate from
definitions and the fact that if $L_i \not \subset R_j$ then $L_i \cap
R_j$ is a proper affine subspace of $L_i$.

We now prove (c). Note that
  since $L$ is totally irrational, we cannot have $R_j = L$ for any
  $j$, and hence $R_j$ is a proper affine subspace of
  $L$. Given $W \subset L$ open, $L_i$ with $L_i \cap W \neq
\varnothing$, and $R_\ell$,  we take a small open set $W'\subset W \sm
R_\ell$, and a line $\sigma$ starting from a point of $W'$ and
intersecting $L_i \cap W$. By the claim, there will be $\sigma'
 \in \mathcal{A}'$ with the same properties. Thus  $L_j = \sigma'$ has
 the required properties.   
  \end{proof}}

%For the proof of Theorem \ref{thm: improvement rate}, we will show that any
%$\mathbf{x} = (x_1, \ldots, x_n)$ which satisfies the conclusion of
%Theorem \ref{thm: using transference}, also
%satisfies the conclusion of Theorem \ref{thm: improvement rate}. 
Given   {$\mathbf{x} = (x_1, \ldots, x_n)\in\R^n$} and {real numbers} $\tau, \vre, t, \eta$, define {the following} parallelepipeds:
$$
\Pi^{\tau,\varepsilon} \df \{(z_0,z_1,...,z_n) \in \mathbb{R}^{n+1}:\,\,\,
\max_{1\le j \le n} |z_j|\le \tau,\,\,\, |z_0 +z_1x_1+...+z_n x_n| \le \varepsilon
\}
$$
and
$$
\Pi_{t, \eta} \df 
\{(z_0,z_1,...,z_n) \in \mathbb{R}^{n+1}:\,\,\,
|z_0|\le t,\,\,\,
\max_{1\le j \le n} |z_0x_j - z_j|\le \eta
\}.
$$
Note that $\Pi^{\tau, \vre}$ encodes information about approximations to
$\mathbf{x}$ as a linear form, while $\Pi_{t, \eta}$ encodes approximations to
$\mathbf{x}$ as a vector. 

With these definitions,  we have the following standard transference result:
\begin{lemma}\label{lem: transference}
{\it  Let  $1\le g \le n$, suppose that $\tau, \vre, t, \eta$ satisfy 
\begin{equation}\label{equalities1}
  \frac{\eta}{t} = \frac{\varepsilon}{\tau} \end{equation}
and
\begin{equation}\label{equalities2}
\eta^{n-g}t =  {(4n)^{2n}} \tau^{g},
\end{equation} 
and {assume that} the parallelepiped $\Pi^{\tau,\varepsilon} $ contains $g$ {linearly} independent integer points.
Then $\Pi_{t, \eta}$ contains a {non-zero} integer point}.
\end{lemma}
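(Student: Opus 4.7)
My plan is to establish this via the classical Minkowski--Mahler transference principle from the geometry of numbers. The key observation is that $\Pi^{\tau,\varepsilon}$ and $\Pi_{t,\eta}$ are essentially dual parallelepipeds: denoting the nontrivial linear forms in the two systems by $L(\mathbf{z})=z_0+\sum_{i\ge 1} x_iz_i$ and $M_i(\mathbf{z})=z_0x_i-z_i$, the corresponding coefficient matrices $A$ and $B$ are unimodular and satisfy $AB^T=\mathrm{diag}(1,-1,\ldots,-1)$. This duality yields the identity
\[
\langle\mathbf{v},\mathbf{z}\rangle \;=\; z_0\,L(\mathbf{v}) \;-\; \sum_{i=1}^n v_i\,M_i(\mathbf{z}),
\]
and hence, using $t\varepsilon=\tau\eta$ from \eqref{equalities1}, the crucial inner-product bound
\[
|\langle\mathbf{v},\mathbf{z}\rangle|\;\le\; t\varepsilon + n\tau\eta \;=\; (n+1)\tau\eta
\qquad\text{for all }\mathbf{v}\in\Pi^{\tau,\varepsilon},\ \mathbf{z}\in\Pi_{t,\eta}.
\]

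The argument then splits into two cases depending on the size of $\tau\eta$. If $(n+1)\tau\eta\ge 1$, then the hypothesis \eqref{equalities2} rewrites as $t\eta^n=(4n)^{2n}(\tau\eta)^g\ge (4n)^{2n}/(n+1)^g\ge 1$ (the last inequality since $(4n)^{2n}\ge(n+1)^n$ for every $n\ge 1$). Consequently $\mathrm{vol}(\Pi_{t,\eta})=2^{n+1}t\eta^n\ge 2^{n+1}$ and Minkowski's first theorem applied to $\Pi_{t,\eta}$ and the lattice $\mathbb{Z}^{n+1}$ directly produces a nonzero integer point. If instead $(n+1)\tau\eta<1$, the inner-product bound forces $\langle\mathbf{v}^{(k)},\mathbf{z}\rangle=0$ for all $k$ and every integer $\mathbf{z}\in\Pi_{t,\eta}$, so it suffices to locate a nonzero point of the sublattice $\Lambda_1=\mathbb{Z}^{n+1}\cap W^\perp$ inside $\Pi_{t,\eta}\cap W^\perp$, where $W=\mathrm{span}_{\mathbb{R}}(\mathbf{v}^{(1)},\ldots,\mathbf{v}^{(g)})$. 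I would apply Minkowski's first theorem in the $(n+1-g)$-dimensional space $W^\perp$: after the harmless normalization $\|\mathbf{x}\|_\infty\le 1/2$, accomplished by an integer translation of $\mathbf{x}$ (which preserves $\mathbb{Z}^{n+1}$ and both bodies up to a volume-preserving isomorphism), Hadamard's inequality gives $\mathrm{covol}(\Lambda_1)\le\prod_k\|\mathbf{v}^{(k)}\|_2\le C_n\tau^g$, and a slicing estimate yields $\mathrm{vol}_{n+1-g}(\Pi_{t,\eta}\cap W^\perp)\ge 2^{n+1-g}\,\mathrm{covol}(\Lambda_1)$, providing the desired nonzero integer point.

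The main technical obstacle lies in the slicing estimate: the volume $\mathrm{vol}_{n+1-g}(\Pi_{t,\eta}\cap W^\perp)$ depends on the orientation of $W$ relative to the axes of $\Pi_{t,\eta}$. I would handle it via a Rogers--Shephard--type inequality
\[
\mathrm{vol}_{n+1-g}(\Pi_{t,\eta}\cap W^\perp)\cdot\mathrm{vol}_g(P_W\Pi_{t,\eta})\;\gtrsim_n\;\mathrm{vol}(\Pi_{t,\eta}),
\]
combined with the fact that the inner-product bound controls the width of $\Pi_{t,\eta}$ in every direction $\mathbf{v}^{(k)}\in W$ by $(n+1)\tau\eta/\|\mathbf{v}^{(k)}\|$, hence bounds $\mathrm{vol}_g(P_W\Pi_{t,\eta})$ from above in terms of $\tau^g$ times a constant. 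The generous factor $(4n)^{2n}$ in \eqref{equalities2} is chosen with enough slack to absorb all the multiplicative constants arising from Hadamard's, Rogers--Shephard's, and Minkowski's inequalities, ensuring the Minkowski hypothesis is met and completing the proof.
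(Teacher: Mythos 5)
Your strategy via Minkowski--Mahler transference duality is a genuinely different route from the paper's. The paper never invokes the dual inner-product bound $|\langle\mathbf{v},\mathbf{z}\rangle|\le(n+1)\tau\eta$; instead it estimates the volumes of the slices $L\cap\Pi^{\tau,\varepsilon}$ and $L^\perp\cap\Pi_{t,\eta}$ directly, by an angle-based dichotomy (comparing $\sin\theta_{\min}$ to $\eta/t$). Your approach replaces that geometric case analysis with the polarity identity $\langle\mathbf{v},\mathbf{z}\rangle=z_0L(\mathbf{v})-\sum_i v_iM_i(\mathbf{z})$ and a Rogers--Shephard slicing estimate. Both proofs rest on the same two load-bearing facts, namely $\mathrm{covol}(W\cap\Z^{n+1})=\mathrm{covol}(W^\perp\cap\Z^{n+1})$ and Minkowski's first theorem applied inside $W^\perp$; yours is arguably the cleaner, more classical execution, and your first case $(n+1)\tau\eta\ge1$ is correct and easy.

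Your second case, however, does not close as written. You propose to bound $\mathrm{covol}(\Lambda_1)$ by $C_n\tau^g$ (Hadamard, after normalizing $\mathbf{x}$) and $\mathrm{vol}_g(P_W\Pi_{t,\eta})$ by $C_n'\tau^g$ (from the widths $(n+1)\tau\eta/\|\mathbf{v}^{(k)}\|$); the product is then of order $\tau^{2g}$. But Minkowski via Rogers--Shephard needs $\mathrm{vol}(\Pi_{t,\eta})\ge2^{n+1-g}\binom{n+1}{g}\,\mathrm{covol}(\Lambda_1)\cdot\mathrm{vol}_g(P_W\Pi_{t,\eta})$, and by \eqref{equalities2} the left side equals $2^{n+1}(4n)^{2n}(\tau\eta)^g$; matching this against $\tau^{2g}$ would force $\eta\ge c_n\tau$, which fails in exactly the regime where the lemma is applied ($\tau\to\infty$, $\eta\to0$). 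The repair is to not decouple the two factors: let $G$ be the Gram matrix of $\mathbf{v}^{(1)},\dots,\mathbf{v}^{(g)}$. Evaluated carefully, the width control gives $\mathrm{vol}_g(P_W\Pi_{t,\eta})\le(2(n+1)\tau\eta)^g/\sqrt{\det G}$, while $\mathrm{covol}(\Lambda_1)=\mathrm{covol}(W\cap\Z^{n+1})\le\sqrt{\det G}$ since the $\mathbf{v}^{(k)}$ span a full-rank sublattice of $W\cap\Z^{n+1}$. The two factors $\sqrt{\det G}$ cancel, the product is at most $(2(n+1)\tau\eta)^g$, and what remains is the purely numerical inequality $(4n)^{2n}\ge\binom{n+1}{g}(n+1)^g$, which holds for all $n\ge1$. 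With this correction your argument is complete, and both the Hadamard step and the normalization of $\mathbf{x}$ (which served only Hadamard) become unnecessary.
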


\begin{proof}[Proof of Lemma \ref{lem: transference}]
Let  $ \mathbf{z}_1, \ldots ,\mathbf{z}_g$ be linearly independent integer points
in $\Pi^{\tau, \vre}$, let $$
{L} \df  {\rm span}\, (\mathbf{z}_1,\ldots,\mathbf{z}_g)$$ be the space
generated by these points, and denote the orthogonal complement of
${L}$ by ${L}^\perp$.  
Clearly  $
\Lambda \df {L}\cap\mathbb{Z}^{n+1}$ is a lattice in ${L}$,
and since ${L}^\perp$ is rational,  
$
\Lambda^\perp  = {L}^\perp\cap\mathbb{Z}^{n+1}$ is a lattice in
${L}^\perp.$ 
It is well known that the covolumes $\mathrm{covol}({L}/\Lambda)$
and  $\mathrm{covol}({L}^\perp/\Lambda^\perp)$  are equal to
each other.
%, and we write this as 
%\begin{equation}\label{a0}
%{\rm det}\, \Lambda = {\rm det}\, \Lambda^\perp . 
%\end{equation}
Now define $ \Omega \df {L} \cap \Pi^{\tau, \vre}$ and  
$ \Omega^\perp \df {L}^\perp \cap \Pi_{t, \eta}$, and denote by 
 ${\rm vol}_g \Omega$  and ${\rm vol}_{n+1-g} \Omega^\perp$
 respectively, the $g$-dimensional and $(n-g+1)$-dimensional volumes
 of $\Omega$ and $\Omega^\perp$.

{Denote the vector $(1,x_1,\dots,x_n)\in\R^{n+1}$ by $\mathbf{x}_0$, and let 
$$
{M} =
\big\{(z_0,z_1,...,z_n) \in \mathbb{R}^{n+1}:\,\,\,
z_0 +z_1x_1+\cdots +z_nx_n = 0
\big\}
$$
be the hyperplane orthogonal to $\mathbf{x}_0$.
% linear subspace of codimension 1 and let
%$\mathbf{x}_0\in \mathbb{R}^{n+1}\nz$ be %the 
%orthogonal %vector 
%to ${M}$.
 Let us} define
$$\theta_{\max}
\df \max_{\mathbf{u} \in {L},\, \mathbf{v}\in {M}} \,(\text{angle between}\,\, \mathbf{u}\,\,\text{and}\,\, \mathbf{v})
$$ 
to be the maximal angle between vectors in ${L}$ and
${M}$, and 
$$
\theta_{\min} = \min_{\mathbf{u} \in {L}^\perp} (\text{angle between}\,\, \mathbf{u}\,\,
\text{and}\,\, \mathbf{x}_0).
$$
{It is easy to see that}
%\begin{equation}\label{a10}
$\theta_{\min}\le  \theta_{\max}$.
%\end{equation}
Indeed, let $\mathbf{u}_0$ be the minimizer in the definition of
$\theta_{\min}$,  let $V$ be the two-dimensional subspace generated by $\mathbf{x}_0$
and $\mathbf{u}_0$, and let  $\mathbf{u}_0^\perp$
and $\mathbf{x}_0^\perp$ be  unit vectors in $V$ {perpendicular to $\mathbf{u}_0$ and $\mathbf{x}_0$ respectively}. Then 
$$
\theta_{\min} = \text{ angle } (\mathbf{u}_0, \mathbf{x}_0) = \text{
  angle } (\mathbf{u}_0^\perp, \mathbf{x}_0^\perp) \leq \theta_{\max}.
$$
{Let us now consider two cases. Suppose that}
%Suppose first that 
$\sin \theta_{\min} \le \frac{\eta}{t}$.
Then we have a lower bound
\begin{equation}\label{a121}
 {\rm vol}_{n-g+1} \Omega^\perp \ge
 {t \eta^{n-g}}
 ,
\end{equation}
and an upper bound
\begin{equation}\label{a1010}
 {\rm vol}_g \Omega \le 2{n}^{g/2} \tau^g
 .
\end{equation}
Note that $\Omega$ contains the convex hull %$\mathrm{conv}(
of {$\{\pm \mathbf{z}_1,
\ldots, \pm \mathbf{z}_g\}$},  and the
{parallelepiped
$$\left\{\mathbf{z} = \lambda_1 \mathbf{z}_1+ \ldots+
\lambda_g\mathbf{z}_g,\,\, \max_{1\le i\le g}|\lambda_i|\le \frac{1}{2}\right\}
\subset \frac{g}{2} \cdot \Omega
$$
contains a fundamental domain for $\Lambda$.}
% convex hull of {$%\mathrm{conv}(
%\{0, \mathbf{z}_1, \ldots,
%\mathbf{z}_g\}$} contains a fundamental domain for $\Lambda$. \comm{I don't understand the last statement. Kolya, can you clarify?} 
Therefore we
have 
$$
\begin{aligned}
%{\rm det}\, \Lambda^\perp \stackrel{\eqref{a0}}{=} {\rm det}\, \Lambda
 \mathrm{covol}({L}/\Lambda) &= \mathrm{covol}({L}^\perp/\Lambda^\perp)
\le
{\frac{g^g\cdot {\rm vol}_g  \Omega}{2^g}} \stackrel{\eqref{a1010}}{\le}
{\frac{g^g n^{g/2}\tau^g}{2^{g-1}}}\\
&{
\stackrel{\eqref{equalities2}}{=} 
\frac{g^gn^{g/2-2n}}{2^{4n+g-1}} \cdot t \eta^{n-g}} \le
\frac{1}{2^{n-g+1}} \cdot t \eta^{n-g}
\stackrel{\eqref{a121}}{\le}   \frac{{\rm vol}_{n-g+1} \Omega^\perp}{2^{n-g+1}}.
\end{aligned}
 $$
By  the Minkowski Convex Body Theorem there exists a {non-zero} integer point in
$\Omega^\perp \subset \Pi_{t, \eta}$. {Hence} we are done in this case. 

Now suppose {that}
$\sin \theta_{\min} > \frac{\eta}{t}$.  Then 
\begin{equation}\label{a12}
 {\rm vol}_{n-g+1} \Omega^\perp \ge
 \frac{\eta^{n-g+1}}{\sin \theta_{\min}}
\end{equation}
and
\begin{equation}\label{a101}
 {\rm vol}_g \Omega \le {n}^{g/2} \tau^{g-1} \cdot \frac{\varepsilon}{\sin  \theta_{\max}}.
\end{equation}
Arguing as in the previous case {but} using
\eqref{a12} and \eqref{a101} instead of \eqref{a121} and \eqref{a1010}, 
we {obtain} 
 \[\begin{split}
% \det  \Lambda^\perp = & \ \det \Lambda\le
\mathrm{covol}({L}/\Lambda) = \mathrm{covol}({L}^\perp/\Lambda^\perp)&\le
\frac{g!}{2^g} \cdot {\rm vol}_g \Omega  \le  \frac{g!
  n^{g/2}}{2^{g}}\cdot \frac{\tau^{g-1} \varepsilon}{ \sin
  \theta_{\max}} \\ &\le  \frac{\eta}{t}\cdot \frac{\eta^{n-g}t}{2^{n-g+1}\sin\theta_{\min}}\le
\frac{{\rm vol}_{n-g+1} \Omega^\perp}{2^{n-g+1}},
\end{split}\]
and, again  by  the Minkowski Convex Body Theorem,
we get a
 {non-zero} integer point in
$\Omega^\perp \subset \Pi_{t, \eta}$.
  \end{proof}
  
  {Now let us state and prove a generalization of Theorem \ref{thm: improvement rate}.
  \begin{theorem} \label{cor: vectors manifolds}
	Let $n\ge 2$, let $Y$ be a $d$-dimensional connected analytic submanifold of $\R^n\cong M_{n,1}(\R)$, where $d\ge 2$, and let $\{R_\ell \}$ be a   countable  
        collection  of proper {closed} analytic submanifolds of $Y$. 
        Suppose that  a 
        {continuous non-decreasing}
        function $f$ satisfies
\eqref{eq: goes to infty monotonically }.%Then  the intersection of  $\UA^*_{{n},1}(f)$ with $Y$ is dense and uncountable. 
        Then the set
       % \eq{complement-sim}
       $${Y\cap \UA_{n,1}(f)  \smallsetminus \bigcup_{\ell\in\N} R_\ell}$$
	%there exist 
	is uncountable and dense in $Y$. In particular, if one in addition assumes that $Y$ is %connected and 
	not contained in any proper rational affine subspace of $\R^n$, then the set
        $Y\cap \UA^*_{n,1}(f)
        % \smallsetminus \bigcup
       % _{\ell\in\N} R_\ell
       $ is uncountable and dense in $Y$.
\end{theorem}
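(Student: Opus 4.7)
The plan is to reduce the problem of finding column vectors in $Y \cap \UA_{n,1}(f)$ to that of finding row vectors in $Y$ that are uniformly $\tilde{f}$-approximable of order $g = d-1$ (for a suitably constructed $\tilde f$), and then to invoke Theorem \ref{cor: linear forms manifolds} together with the transference Lemma \ref{lem: transference}.

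First I would set up the transferred approximating function. Given a target height $T$ for the column-vector approximation, set $t = T$ and $\eta = f(T)$ in Lemma \ref{lem: transference} with $g = d-1$ (so $n - g = n - d + 1$); solving \eqref{equalities1}--\eqref{equalities2} yields
$$\tau(T) \df \left(\frac{T\, f(T)^{n-d+1}}{(4n)^{2n}}\right)^{\!1/(d-1)}, \qquad \varepsilon(T) \df \frac{f(T)\, \tau(T)}{T}.$$
Rewriting $\tau(T)$ as a constant multiple of $\bigl(T^{1/(n-d+1)} f(T)\bigr)^{(n-d+1)/(d-1)}$, the monotone divergence hypothesis \eqref{eq: goes to infty monotonically } ensures that $\tau$ is a continuous increasing function on a ray $[T_0, \infty)$ with $\tau(T) \to \infty$, so $\tau$ admits a continuous inverse $T(\cdot)$ on $[\tau(T_0), \infty)$. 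A direct computation gives $\varepsilon(T) = \text{const} \cdot f(T)^{n/(d-1)} T^{-(d-2)/(d-1)}$, which is non-increasing in $T$ whenever $f$ is (and where $d \geq 2$). I would then define $\tilde f(\tau) \df \varepsilon(T(\tau))$ for $\tau \geq \tau(T_0)$, extended to a positive non-increasing function on $\R_{>0}$.

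Next I would observe that Lemma \ref{lem: transference} directly yields the implication
$$\mathbf{x} \in \UA_{1,n}(\tilde f;\, d-1) \ \Longrightarrow\ \mathbf{x} \in \UA_{n,1}(f):$$
indeed, for every large $T$ the parallelepiped $\Pi^{\tau(T), \tilde f(\tau(T))} = \Pi^{\tau(T), \varepsilon(T)}$ contains $d-1$ linearly independent integer points by hypothesis, so the lemma produces a nonzero integer point in $\Pi_{T, f(T)}$, which is precisely the column-vector $f$-uniformity condition. Then I would apply Theorem \ref{cor: linear forms manifolds} to $Y$, with $g = d-1$, approximating function $\tilde f$, and the given countable collection $\{R_\ell\}$ (enlarged, for the `in particular' clause, by the countably many sets $Y \cap L$ for $L$ a proper rational affine subspace of $\R^n$; these are proper closed analytic submanifolds of $Y$ by the hypothesis that $Y$ is connected analytic and not contained in any such $L$, combined with Lemma \ref{lem: crucial}). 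The resulting dense uncountable subset of $Y \cap \UA_{1,n}(\tilde f;\, d-1) \setminus \bigcup_\ell R_\ell$ is contained, by the implication above, in $Y \cap \UA_{n,1}(f) \setminus \bigcup_\ell R_\ell$, as required.

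The main obstacle I anticipate is the bookkeeping in Step 1: verifying that $\tau(\cdot)$ is genuinely monotone (so that the inverse $T(\tau)$ exists) and that $\tilde f$ is non-increasing. Both depend crucially on the \emph{monotone} divergence of $t^{1/(n-d+1)} f(t)$, not merely divergence, and on $f$ being non-increasing; the calculation is elementary once one writes $\tau$ as a power of $t^{1/(n-d+1)} f(t)$ and $\varepsilon$ in the closed form displayed above, but it is the only place where the precise shape of the hypothesis \eqref{eq: goes to infty monotonically } is actually used. Everything else is a direct invocation of results already available: Lemma \ref{lem: transference} for the transference step, and Theorem \ref{cor: linear forms manifolds} for the existence of row vectors of order $d-1$ with avoidance of countably many analytic submanifolds.
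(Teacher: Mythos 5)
Your proof is correct and follows essentially the same route as the paper's: the same choice of $\eta = f(t)$, $\tau$ solving \eqref{equalities2}, the same closed-form computation of $\varepsilon(T) = \mathrm{const}\cdot f(T)^{n/(d-1)}T^{-(d-2)/(d-1)}$ to verify monotonicity, the same transfer via Lemma \ref{lem: transference} to get $\UA_{1,n}(\tilde f;\,d-1) \Rightarrow \UA_{n,1}(f)$, and the same application of Theorem \ref{cor: linear forms manifolds}. (Both you and the paper implicitly treat $f$ as non-increasing rather than the stated ``non-decreasing,'' which appears to be a typo in the theorem's hypotheses; your explicit handling of the ``in particular'' clause by enlarging $\{R_\ell\}$ with intersections against rational affine subspaces is a harmless elaboration of what Theorem \ref{cor: linear forms manifolds} already provides.)
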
}

  \begin{proof}%[Proof of Theorem \ref{thm: improvement rate}]
  {%Let  $g \df d-1$. We are going to show that if a column vector
%$\mathbf{x} \in\R^n$ is such that $\mathbf{x}^T\in \UA_{1,n}(f;g)$ (the set appearing in  the conclusion of
%Theorem \ref{cor: linear forms manifolds}), then $\mathbf{x}$
%satisfies the conclusion of Theorem \ref{cor: vectors manifolds}.  
 For $t> 0$  and  $g \df d-1$, let us  set $\eta \df f (t)$ and $\tau \df  \left(
  \frac{\eta^{n-g}t}{ {(4n)^{2n}}} \right)^{1/g},  $ so that
\eqref{equalities2} is satisfied,
 {and the function $t\mapsto \tau$ is  continuous.}
By \eqref{eq: goes to infty monotonically }, we have that 
$ \tau \to \infty$ monotonically when $t\to \infty$.
The map $t \mapsto \tau$ is thus bijective, 
%\comm{(So do we want to assume continuity of $f$ here? or proceed without having a precise bijection?)}
  hence one can consider the inverse map $ \tau\mapsto t(\tau)$  and define a positive function  
$$\varepsilon = {h}(\tau) \df \frac{\tau f \big(t(\tau)\big)}{t(\tau)},
$$  so that \eqref{equalities1} holds. Note that $h$ is non-increasing, since so is the function 
$$
h\big(\tau(t)\big) = \frac{\left(
  \frac{f(t)^{n-g}t}{ {(4n)^{2n}}} \right)^{1/g}f(t)}t = \frac{f(t)^{n/g}}{ {(4n)}^{2n/g}t^{1-1/g}}.$$
%, and by \eqref{eq: goes to infty}, this function  satisfies $\lim_{\tau\to \infty} f (\tau) = 0$.
We will
prove %that if %$\mathbf{x}$ satisfies the conclusion of Theorem \ref{thm:  using transference} for the function  $\tau \mapsto {h}(\tau) $ instead of $f$, 
the implication
\eq{implication}{\mathbf{x}^T\in \UA_{1,n}(h;g) \ \Longrightarrow\ 
%(the set appearing in  the conclusion of
%Theorem \ref{cor: linear forms manifolds}),
% then $
\mathbf{x}\in \UA_{n,1}(f).}
 %is $f$-uniform. 
 This, in view of Theorem \ref{cor: linear forms manifolds}, will immediately imply the conclusion of Theorem \ref{cor: vectors manifolds}.}

{To prove \equ{implication}, note that %by definition of the  ${h}$-uniformity of order $g$,
$\mathbf{x}^T\in \UA_{1,n}(h;g)$ amounts to saying that for any large
enough  $\tau$ the parallelepiped
$\Pi^{\tau,\varepsilon}$ contains $g$ linearly independent integer points. In view of
Lemma \ref{lem: transference} we get that for all $t$ large
enough,  the parallelepiped $\Pi_{t, \eta}$ contains a nonzero integer point $\mathbf{z} =
(q, \mathbf{p})$. For $t$
large enough, the first coordinate $q$ will be nonzero;
% and, replacing
%$\mathbf{z}$ with $-\mathbf{z}$ if necessary, 
thus we can find $q \in \Z\nz$ and $\mathbf{p} \in \Z^n$ such that  
$q \leq t$ and $\|q\mathbf{x} -\mathbf{ p} \| \le f(t).$}
 \ignore{Take $f = \phi_{\gamma}$, where $\gamma <
    \frac{1}{n+1-s}$, and let $g \df s-1$. This
    implies
    \begin{equation}\label{eq: goes to infty}
t^{\frac{1}{n-g}}\cdot 
\varphi(t) \to \infty\,\,\,\, \text{ as }  t\to \infty.
\end{equation}
We remark that our proof will only use \eqref{eq:
  goes to infty}, and thus will work for general functions. \comm{Then why not state a general result?}

For $t> 0$ set $\eta \df f (t), \  \tau \df  \left(
  \frac{\eta^{n-g}t}{ {\color{orange} (4n)^{2n}}} \right)^{1/g},  $ so that
\eqref{equalities2} is satisfied. By \eqref{eq: goes to infty}, we have that 
$ \tau \to \infty$ when $t\to \infty$.
The map $t \mapsto \tau$ is thus bijective, and we define a positive function 
$\varepsilon = f(\tau) = \frac{\tau \eta (t(\tau))}{t(\tau)}
$, which was defined so that \eqref{equalities1} holds. 
%, and by \eqref{eq: goes to infty}, this function  satisfies $\lim_{\tau\to \infty} f (\tau) = 0$.
We will
prove that if $\mathbf{x}$ satisfies the conclusion of Theorem \ref{thm:
  using transference}, for $\tau \mapsto f(\tau) $, then $\mathbf{x}$ is
$f$-uniform. 

By definition of $f$-uniformity of order $g$, for any $\tau$ large
enough the parallelepiped
$\Pi^{\tau,\varepsilon}$ contains $g$ linearly independent integer points. By
Lemma \ref{lem: transference} we get that for all $t$ large
enough, $\Pi_{t, \eta}$ contains a nonzero integer point $\mathbf{z} =
(q, \mathbf{p})$. For $t$
large enough, the first coordinate $q$ will be nonzero, and, replacing
$\mathbf{z}$ with $-\mathbf{z}$ if necessary, 
we can find $q \in \N$ and $\mathbf{p} \in \Z^n$ such that  
$q \leq t$ and $\|q\mathbf{x} -\mathbf{ p} \| < \varphi(t).$}
\end{proof}

\section{Further applications% {and its simplified version}
}\label{further}

{Most of the results described in this section are not proved in this paper; the proofs will appear elsewhere.}

\subsection{Irrationality measure functions and the Kan--Moshchevitin phenomenon} \label{kanmos}
Another convenient way to describe various results in the theory of \da\ is through   irrationality measure functions. Namely,  given
\amr, one defines its {\sl irrationality measure function} by
$$
\psi_{A}: \R_{>0} \to \R_{>0}, \ \ \psi_{A}(t) \df 
\inf \big\{\|A\mathbf{q}  - \vp\| : \mathbf{q} \in \Z^n\nz, \ \|\mathbf{q} \| \leq t, \ \vp \in \Z^m \big\}. 
$$
Then %, given %a  positive
%function $f: \R_{>0} \to \R_{>0}$, we say that $\mathbf{x}$ is {\em
 % $f$-singular} 
it is easy to see that    $A$ is $f$-uniform\footnote{Likewise one can also use the function $
\psi_{A}$ to study {\em asymptotic approximation}:  requiring that
$\psi_{A}(t) \le   f(t)$ for an unbounded set of $t>0$, one gets a definition of {\em $f$-approximable} systems of linear forms $A$.}
  if and only if $\psi_{A}(t) \le  f(t)$ for all
large enough $t$. Similarly 
 for arbitrary Diophantine system $\mathcal{X} =
     \left( X, \mathcal{D}, \mathcal{H}\right)$  one can introduce the   {irrationality
  measure function} associated to $x\in X$:
$$
\psi_x(t) \df  \inf \{ d_s(x):  h_s\leq t\};
$$    
then    $x\in \UA_{\mathcal{X}}(f)$  if and only if $\psi_x(t) \le f(t)$ for all large enough $t$.
\ignore{Our main theorem (Theorem \ref{thm: special axiomatic theorem}) thus claims that under certain conditions on $ \mathcal{X}$, for any positive non-increasing function $f$ %decreasing to zero 
the set  of $ x\in X$ such that 
$$
\exists \, t_0\,\,\, \forall\, t\ge t_0\,\,\,\, \psi_x(t)< f(t)
$$
is at least non-empty.}

In 2009 the following result was proved by Kan and Moshchevitin \cite{KaMo}: in the standard Diophantine system corresponding to approximations to one real number ($m=n=1$), for any two different real numbers  $x,y$ with 
 $$
\psi_x(t) >0,\,\,\,\,
\psi_y(t) >0 \,\,\,\forall \, t
$$
(a condition equivalent to $x,y\notin\Q$),
the difference
 $
\psi_x(t) -
\psi_y(t) 
$
changes its sign infinitely often as $t \to \infty$. This phenomenon is not present when $\max(m,n) > 1$. And more generally, as long as the assumptions of our main theorem (Theorem \ref{thm: special axiomatic theorem}) are satisfied, there exists pairs of points whose irrationality
  measure functions do not exhibit the pattern of infinitely many changes of signs. More precisely, suppose   a  Diophantine system  $${\mathcal{X} = \big( X, \mathcal{D} = \{d_{s}: s\in\I\}, \mathcal{H} = \{h_{s}: s\in\I\}\big)}$$
        %$\mathcal{X}_k = (X_k,\mathcal{R}_k, \mathcal{H}_k, \mathcal{D}_k)$ as above, 
    %  a   continuous map $\varphi:Y\to X$ %\comm{Any assumptions
                                %on $\varphi_k$?}.  
	and a countable 
        collection $\mathcal{L}$ of closed   subsets of $X$ %with empty interior 
       satisfy the assumptions of Theorem \ref{thm: special axiomatic theorem} with $Y = X$, $\varphi_k\equiv \Id$ and  $\mathcal{R} = \{{d_s}^{-1}(0) : s\in \I\}$. Then for any $x\in X$ such that $$d_{s}(x)\ne 0\quad\Longleftrightarrow  \quad \psi_x(t) >0 \,\,\,\forall \, t$$
one can apply the theorem with $f = \psi_x$ and conclude that there exists a dense set of $y\in X$ such that 
 $
\psi_y(t) >0$  for all $t$ and $\psi_y(t)< \psi_x(t)$ for all   large enough $t$.

\subsection{Inhomogeneous approximation and approximation with restrictions} \label{inh}
It is natural to consider the problems of uniform approximation for systems of linear forms in the inhomogeneous set-up; that is, fix $\b\in\R^m$ and, instead of \equ{digeneral},  look for nontrivial integer solutions of the system
\eq{digeneralinh}{
{\|A\vq + \b - \vp \|
\le  f(t)}
  \ \ \ \mathrm{and}  
\ \ \|\vq\|
\le  t
.}  
This calls for considering 
the collection \eq{ellinh}{\begin{aligned}\{L_{\vp,\vq,\b} : \vq \in\Z^n\nz, \ \vp\in\Z^m\},\\
 \text{  
% for $\vp\in\Z^m$ and $\vq \in\Z^n\nz$ we will let  $L_{\vp,\vq}$ be the affine subspace of $\mr$ given by
where }L_{\vp,\vq,\b} \df \{A\in\mr: A\vq + \b= &\vp\},
\end{aligned}}
and proving its total density for arbitrary $\b\in\R^n$. And indeed it turns out to be  possible to achieve this when $n > 1$, and thereby construct $f$-uniform  systems of affine forms with an arbitrary fixed translation part. Moreover, in a forthcoming joint work with {Hong and} Neckrasov we 
 take subsets $P$ of  $\R^m$ and  $Q$ of $\Z^n$, and say that 
 \amr\ is  \textsl{$f$-uniform with respect to $(P,Q)$}
if %there exists a positive $\vre < 1$ such that 
for all
large enough $t>0$  there exists
%a nonzero integer 
$\vq  \in Q$ and $\vp \in P$ satisfying  \equ{digeneral}. The classical theory of homogeneous approximation corresponds to $P = \Z^m$ and $Q = \Z^{{n}}\nz$. 
The following can be proved (work in progress):

\begin{theorem} \label{vasya} The collection ${{\{L_{\vp,\vq}}: \vp\in P,\,\vq\in Q\}}$ of subspaces of $\mr$ is totally dense if

 \smallskip
 \begin{itemize}
\item[\rm(a)]  $Q = \Z^n\nz$ and  $P$ is a subgroup of $\Z^m$ of rank $>m-n+1$ ({restricted numerators});

 \smallskip\item[\rm(b)]  $Q = Q_1\times\cdots\times Q_n\subset \Z\times\cdots\times\Z$, 
where
 at least two of the sets $Q_i$ are infinite  ({restricted denominators}), 
and $P$ is of bounded Hausdorff distance from $\R^m$.
\end{itemize}
%It is clear that this collection is very dense whenever $n > 1$ \comm{(does it require an explanation?)}
Consequently (modulo Theorem \ref{thm: special axiomatic theorem})  for any %positive
        non-increasing 
     %function 
$f: \R_{>0} \to \R_{>0}$ the set of %totally irrational
     $m \times n$ matrices	 that are  $f$-uniform with respect to $(P,Q)$ and not contained in a given countable family of proper analytic submanifolds of $\mr$ is
     uncountable and dense. 
 \end{theorem}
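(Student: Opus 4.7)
\medskip

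My plan is to verify, for both settings, the total density axiom \equ{vd3} for the restricted collection $\mathcal{L}(P,Q)\df\{L_{\vp,\vq}: \vp\in P, \vq\in Q\}$; once this is done, the observation at the end of Section \ref{dioph} that $\mathcal{L}(P,Q)$ automatically respects every closed analytic submanifold (since the inclusion $L\cap R$ open in $L$ forces $L\subset R$ for affine $L$) will let us apply Theorem \ref{thm: special axiomatic theorem} verbatim, yielding the ``consequently'' part of the statement. The backbone of the proof will mirror that of Proposition \ref{verydensematrices}: pick a rational $B\in L_{\vp_0,\vq_0}\cap W\cap M_{m,n}(\Q)$ and analyze
$$
E_B \df B\;+\;\bigcup_{\vq\in Q_B}\{C\in\mr: C\vq=0\},
\qquad Q_B\df\{\vq\in Q: B\vq\in P\},
$$
then take the closure of the union of $E_B$ as $B$ varies.

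For case (a), the key algebraic input is that the rank condition on $P$ forces $Q_B$ to span a large rational subspace of $\R^n$. Indeed, fix a surjection $\Pi:\Z^m\to\Z^{m-r}$ with kernel the saturation of $P$; then $B\vq\in P\otimes\Q$ iff $\Pi B\vq=0$, so $Q_B$ is commensurable with $\ker(\Pi B)\cap\Z^n$, a rational subspace of dimension $\ge n-(m-r)\ge 2$. Since the integer directions in $V_B\df\ker(\Pi B)\otimes\R$ are dense in $V_B$, one gets $\overline{E_B}=B+\{C\in\mr: \ker C\cap V_B\ne 0\}$. I would then let $B$ range over rational matrices in $L_{\vp_0,\vq_0}\cap W$ subject to $\Pi B$ being ``generic''; the subspaces $V_B$ will vary densely over a positive-dimensional family of rational $(n{-}m{+}r)$-planes in $\R^n$, and the closure of $\bigcup_B(B+\{C:\ker C\cap V_B\ne 0\})$ will contain a full neighborhood of $L_{\vp_0,\vq_0}\cap W$ in $\mr$ by a parameter-counting argument analogous to the one in the last part of the proof of Proposition \ref{verydensematrices} (the dimensions add up correctly precisely because $\dim V_B\ge 2$ gives enough perpendicular freedom to fill in the ``missing'' column directions that $R_{<n}$ provided in the unrestricted case).

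For case (b), I would exploit the net property directly rather than the kernel computation. Assume $Q_1,Q_2$ are infinite and let $R>0$ be such that every point of $\R^m$ lies within distance $R$ of $P$. Given $B\in L_{\vp_0,\vq_0}\cap W$, consider the two-parameter family $\vq(k_1,k_2)\df\vq_0+k_1\e_1+k_2\e_2$ with $k_i\in Q_i$ (after harmlessly translating $Q_i$ so $0\in Q_i$), choose $\vp(k_1,k_2)\in P$ at distance $\le R$ from $B\vq(k_1,k_2)$, and observe that $\mathrm{dist}(B,L_{\vp(k_1,k_2),\vq(k_1,k_2)})\le R/\|\vq(k_1,k_2)\|\to 0$ as $\max(|k_1|,|k_2|)\to\infty$. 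Passing to a further substitution $\vq_0 \rightsquigarrow \vq(k_1,k_2)$ and iterating, the normal directions $\vq$ sweep out a two-dimensional rational cone, so the same Grassmannian-density argument as in case (a) (now with $V_B$ a two-plane in $\R^n$) forces the closure of the union of the resulting $L_{\vp,\vq}$'s to contain a neighborhood of $B$ in $\mr$.

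The main obstacle in both cases is step (6) of the template: controlling the union $\bigcup_B \overline{E_B}$ rather than a single $\overline{E_B}$. Single $\overline{E_B}$ has positive codimension in $\mr$, so the argument must genuinely use the variation of $B$ in $L_{\vp_0,\vq_0}\cap W\cap M_{m,n}(\Q)$, together with the rational density of this parameter set, to conclude that the appropriate family of affine subspaces of $\mr$ sweeps out an open set. In case (a) this requires a careful transversality analysis of $V_B$ as $B$ moves; in case (b) the difficulty is instead a quantitative one, namely arranging the approximate equality $B\vq\approx\vp$ to produce genuinely open subsets of $\mr$ in the limit. I expect both difficulties to be tractable but to constitute the technical heart of the proof that will appear in the sequel.
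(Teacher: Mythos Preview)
The paper does not contain a proof of this theorem. It appears in \S\ref{further} (``Further applications''), which opens with the disclaimer ``Most of the results described in this section are not proved in this paper; the proofs will appear elsewhere,'' and the theorem itself is introduced by ``The following can be proved (work in progress).'' There is therefore nothing to compare your proposal against.

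That said, a few remarks on your sketch. For case~(a) the strategy of computing $Q_B$ via a surjection $\Pi:\Z^m\to\Z^{m-r}$ and identifying $\overline{E_B}$ with $B+\{C:\ker C\cap V_B\ne 0\}$ is sound and parallels Proposition~\ref{verydensematrices} naturally; the transversality step you flag (varying $V_B$ as $B$ moves in $L_{\vp_0,\vq_0}\cap W$) is indeed the crux, and you have correctly identified that $\dim V_B\ge 2$ is exactly the threshold needed. For case~(b) there is a slip: $Q_i$ is merely an infinite subset of $\Z$, not a subgroup, so ``harmlessly translating $Q_i$ so $0\in Q_i$'' is not permitted, and likewise $\vq_0+k_1\e_1+k_2\e_2$ need not lie in $Q$ for $k_i\in Q_i$. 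You will need to work instead with differences $q_i-q_i'$ for $q_i,q_i'\in Q_i$ (which are unbounded since $Q_i$ is infinite) or argue directly with two distinct elements of each $Q_i$. The overall shape of the argument---using the net property of $P$ to find $\vp$ close to $B\vq$ and letting $\|\vq\|\to\infty$ to force proximity of the hyperplanes---is the right idea, but as you yourself note, turning ``arbitrarily close'' into ``the closure has nonempty interior'' is where the actual work lies, and your sketch does not yet supply it.
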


Note that both (a) and (b) implicitly assume that $n > 1$, and the set-up of (b) includes inhomogeneous approximation by letting $P = \Z^m+\b$ {for a fixed $\b\in\R^m$}.
{When $n=1$, it is clear that for any fixed $\b\in\R^m$ the collection \equ{ellinh} is not totally dense.
On the other hand one can study a doubly metric version of the problem, that is, the set of 
%, one can construct $f$-uniform systems of affine forms, that is, 
pairs $(A,\b$) such that \equ{digeneralinh} has a non-trivial integer solution for large enough $t$.
It was recently 
{documented}
by the second named author \cite{Mnew} that for any $m\in\N$ and any   non-increasing 
     %function 
$f: \R_{>0} \to \R_{>0}$  there exists a dense and uncountable set of pairs $(\x,\y)\in\R^m\times \R^m \cong M_{m,2}$ such that the system of inequalities
$$\|q\x + \y - \vp\| \le  f(t)
  \ \ \ \mathrm{and}  
\ \ |q|
\le  t 
$$
has a non-zero integer solution $(\vp,q)$ for all large enough $t$. 
{In fact,  this statement follows from a old result by Khintchine \cite{Khi}
which is not very well known. See also \cite[\S 3.3]{MN} for a discussion.}
%\comm{Here we need a reference to Khintchine!} 
%It is plausible that 
%the method of proof of Theorem \ref{vasya}(b) can be used to generalize this to the weighted set-up. This is also a work in progress joint with Neckrasov.}
\ignore{is totally dense in $\R^m \times \R^m\
\|A\vq + \b - \vp \|
\le  f(t)
  \ \ \ \mathrm{and}  
\ \ \|\vq\|
\le  t
 by allowing $\b$ to vary. Namely, one can prove that the collection
{$$\big\{\{(\x,\b)\in\R^m\times \R^m: q\x + \b= \vp\}:q \in\Z\nz, \ \vp\in\Z^m \big\}
$$
is totally dense in $\R^m \times \R^m\cong M_{m,2}$. This can be done by a modification of the proof of Theorem \ref{vasya}(b). As a consequence, one }
% when $n>1$, and requires an independent proof when $n=1$. 
Clearly the same result can be established in the set-up of weighted \da\ as in \equ{weights}. Note that the case $m=n=1$ has been proved earlier by the second-named author using continued fractions (unpublished).}

\subsection{Rational approximations to linear subspaces} \label{grassm}
In this subsection we fix $d\in\N_{\ge2}$ and $a = 1,\dots, d-1$, and consider a certain Diophantine system on $X = \Gr_{d,a}$, the Grassmannian of $a$-dimensional  subspaces of $\R^d$. Following the set-up of \cite{Schmidt}, one can study the problems of approximation of  an $a$-dimensional  subspace $A$ of $\R^d$ by $b$-dimensional rational subspaces $B$ of $\R^d$, where $1\le b < d$, in terms of the so-called {\sl first angle} between the subspaces. 
The latter (or rather, formally speaking, the sine of the angle) is defined as follows:  
$$
\measuredangle_1 (A,B) \df  \min_{\x \in A\nz,\, \y \in B\nz}
\frac{\|\x\wedge \y\|}{\|\x\|\cdot \|\y\|},
$$
where $\|\cdot\|$ is the Euclidean norm on $\R^d$ and on $\bigwedge^2(\R^d)$.

 Note that $
\measuredangle_1 (A,B) = 0$ if and only if $\dim(A\cap B) >0$. Using the terminology introduced in \cite{N}, let us say that  $A\in \Gr_{d,a}$ is {\sl completely irrational} if for any $(d-a)$-dimensional rational subspace $R$ we have
$A\cap R = \{{0}\}$. We also define the {\sl height} $H(B)$ of a rational subspace $B\subset \R^d$ { of dimension $b$ in the natural way as the
covolume of the $b$-dimensional lattice $ B\cap \mathbb{Z}^d$}. Using the methods of this paper, namely Theorem \ref{thm: special axiomatic theorem}, it is possible to prove the following

\begin{theorem} \label{angles} Let $1\le a,b < d$ with $\max(a,b) > 1$. 
 Then for any %positive
        non-increasing 
     %function 
$f: \R_{>0} \to \R_{>0}$ the set of %totally irrational
      completely irrational	$A\in \Gr_{d,a}$ 
such that   the system of inequalities
$$
H(B)  \le t,\,\,\, \measuredangle_1 (A,B) \le f (t)
$$
has a solution 
in $b$-dimensional rational subspaces $B$  for all  large enough $t$  is
     uncountable and dense. 
 \end{theorem}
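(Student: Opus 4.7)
The plan is to apply Theorem \ref{thm: special axiomatic theorem} with $Y = X = \Gr_{d,a}$ (a compact, hence locally compact, metric space), $\varphi = \Id$, and the Diophantine system $\mathcal{X}$ whose index set $\I$ is the countable collection of rational $b$-dimensional subspaces $B \subset \R^d$, with heights $h_B \df H(B)$ and generalized distance functions $d_B(A) \df \measuredangle_1(A, B)$. Continuity of $d_B$ is standard, and the zero set $d_B^{-1}(\{0\})$ equals the Schubert variety $\Sigma_B \df \{A : \dim(A \cap B) \ge 1\}$, so $A \in \UA_{\mathcal{X}}(f)$ amounts precisely to the system of inequalities in the statement. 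To encode complete irrationality I set $\mathcal{R} \df \{\Sigma_R : R \text{ is a rational } (d-a)\text{-dimensional subspace of } \R^d\}$, a countable collection of proper closed subsets of $\Gr_{d,a}$ whose complement is exactly the set of completely irrational $a$-subspaces.

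The construction of $\mathcal{L}$ splits on whether $a \ge 2$ or $a = 1$. If $a \ge 2$, I take $\mathcal{L} \df \{L_v : v \in \Q^d \nz\}$ with $L_v \df \{A \in \Gr_{d,a} : v \in A\}$, each a connected closed subvariety isomorphic to $\Gr_{d-1, a-1}$. If $a = 1$ (so necessarily $b \ge 2$), I instead take $\mathcal{L} \df \{\mathbb{P}(S) : S \text{ is a rational 2-dimensional subspace of } \R^d\}$, a countable family of rational projective lines in $\Gr_{d,1} = \mathbb{P}^{d-1}$. In either case, alignment with $\mathcal{X}$ is immediate: any rational $b$-dimensional $B$ containing $\R v$ (resp.\ containing $S$) satisfies $L \subset \Sigma_B$, and such a $B$ exists because $b \ge 1$ (resp.\ $b \ge 2$). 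Condition \equ{vd1} is then a consequence of the density of rational points in $\R^d$.

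To verify \equ{vd2}, fix $L \in \mathcal{L}$ with $L \cap W \neq \varnothing$ and $\Sigma_R \in \mathcal{R}$. When $L = L_v$ and $a \ge 2$, I would first pick $A_0 \in L_v \cap W$ with $A_0 \not\subset R$ --- possible because $\{A \in L_v : A \subset R\}$ is a proper subvariety of $L_v$ when $a \ge 2$ --- then select $u \in A_0 \setminus R$, approximate $u$ by a rational $v' \notin R$, and form $A_1 \df \mathrm{span}(v, v', u_3, \ldots, u_a) \in L_v \cap L_{v'}$ close enough to $A_0$ to remain in $W$; the dimension identity $a + (d-a) = d$ forces a generic $a$-subspace through $v'$ to meet $R$ trivially, so $L_{v'} \not\subset \Sigma_R$. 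When $L = \mathbb{P}(S)$ and $a = 1$, I would analogously pick a rational line $\ell \subset S$ lying in $W$ and a rational vector $w \notin R$, then set $S' \df \mathrm{span}(\ell, w)$; that $\ell \in \mathbb{P}(S) \cap \mathbb{P}(S') \cap W$ and $S' \not\subset R$ are both clear.

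For the respects condition, when $\R v \subset R$ (resp.\ $S \subset R$) the inclusion $L \subset \Sigma_R$ holds automatically. Otherwise, the identification $L_v \cong \Gr_{d-1, a-1}$ (via quotient by $\R v$) carries $L_v \cap \Sigma_R$ to a proper Schubert subvariety, and $\mathbb{P}(S) \cap \Sigma_R = \mathbb{P}(S \cap R)$ is a proper closed subvariety of the projective line $\mathbb{P}(S)$; in either case the intersection has empty interior in $L$. All hypotheses of Theorem \ref{thm: special axiomatic theorem} are then satisfied, and the connectedness of elements of $\mathcal{L}$ yields uncountability via part (b). The main technical obstacle I anticipate is executing cleanly the generic-position argument for constructing $A_1$ in the case $a \ge 2$; the borderline case $a + b \ge d+1$ is essentially trivial, since then $\Sigma_B = \Gr_{d,a}$ for every $B$ and the approximation inequality holds vacuously, while the completely irrational $a$-subspaces already form a dense uncountable set.
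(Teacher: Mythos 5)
The paper does not actually prove Theorem \ref{angles}: immediately after its statement the authors write that the theorem ``as well as several generalizations, will be proved in a forthcoming work,'' and the opening of \S\ref{further} makes explicit that ``most of the results described in this section are not proved in this paper.'' So there is no proof in the text to compare against. What the paper does say is that Theorem \ref{thm: special axiomatic theorem} is the intended tool, and your proposal implements exactly that strategy.

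Your construction looks sound. The Diophantine system (indices = rational $b$-subspaces, $h_B = H(B)$, $d_B = \measuredangle_1(\cdot, B)$, zero set $\Sigma_B$), the choice $\mathcal{R} = \{\Sigma_R : R \text{ rational of dimension } d-a\}$ whose union is precisely the non-completely-irrational locus, and the two choices of $\mathcal{L}$ (Schubert cycles $L_v$ for $a\ge 2$, rational projective lines for $a=1$) are all natural and the verifications of alignment, total density relative to $\mathcal{R}$, respect, and connectedness all go through. Two small points worth tightening in a write-up: (1) in the $a\ge 2$ verification of \equ{vd2}, when $v\notin R$ you should simply note that $L'=L_v$ already works; your span construction $A_1 = \mathrm{span}(v,v',u_3,\dots,u_a)$ tacitly assumes $v$ and $u$ are independent, which is automatic when $v\in R$ and needs a word otherwise (generic $u\in A_0\setminus(R\cup\R v)$ suffices since $a\ge 2$). (2) The key identity you use for the respect condition --- that for $v\notin R$ the set $L_v\cap\Sigma_R$ corresponds under the quotient-by-$\R v$ isomorphism $L_v\cong\Gr_{d-1,a-1}$ to the Schubert cycle $\{A' : A'\cap\pi(R)\neq\{0\}\}$, with $\pi(R)$ of dimension $d-a$ and codimension exactly complementary to $a-1$ in $\R^{d-1}$ --- is correct, but deserves a line of justification (one checks that $A\cap R\neq\{0\}$ iff $\pi(A)\cap\pi(R)\neq\{0\}$ precisely because $v\notin R$). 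With these details filled in, the argument is a valid deduction from Theorem \ref{thm: special axiomatic theorem}.
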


The case $d=4$ and $a=b=2$ is a recent result of Chebotarenko \cite{Ch}. The above theorem, as well as several generalizations, will be proved in a forthcoming work.

\ignore{\subsection{Divergent trajectories on parameter spaces} \label{div} It was first observed by Dani \cite{dani} that singular $m\times n$ matrices correspond to divergent trajectories in the space of unimodular lattices in $\R^{m+n}$. Similarly, using a refiniement of Dani's correspondence introduced in \cite{KMa}, one can relate $A\in\UA_{m,n}(f)$, where $f$ is a  rapidly decaying function, to a trajectory diverging with a rate that is close to the maximal possible one. See also a discussion in \cite[\S3.1]{dfsu}.

Dani in 1985 used his correspondence to transfer the scheme developed by Khintchine and Jarn\'{i}k to the setting of dynamical systems on homogeneous spaces, and in particular to prove the existence of non-obvious \comm{(non-degenerate?)} divergent trajectories on homogeneous spaces of other higher rank semisimple Lie groups. Later these ideas were utilized by the third-named author \cite{Barak_GAFA}, who in particular constructed trajectories divergent as fast as possible on homogeneous spaces and on the moduli spaces of translation surfaces. \comm{Barak this is just a draft -- please edit as you wish.} Looking at the proofs it is not hard to observe that the key (implicit) step there is verifying the total density property of certain collections of closed subsets of the homogeneous (moduli) space. Therefore the aforementioned results of \cite{dani} and  \cite{Barak_GAFA} can be obtained as corollaries from Theorem \ref{thm: special axiomatic theorem}, which in particular implies an upgrade to the construction of points with divergent trajectories outside of a given countable collection of proper analytic submanifolds.}

\end{document}